\titleformat{\section}{\bfseries}{\thesection .}{0.5em}{}
\titleformat{\subsection}{\itshape}{\thesubsection .}{0.5em}{}
\begin{document}

\theoremstyle{plain}% Theorem-like structures provided by amsthm.sty
\newtheorem{theorem}{Theorem}[section]
\newtheorem{lemma}[theorem]{Lemma}
\newtheorem{corollary}[theorem]{Corollary}
\newtheorem{proposition}[theorem]{Proposition}
\newtheorem{assumption}[theorem]{Assumption}
\newtheorem{problem}[theorem]{Problem}

\theoremstyle{definition}
\newtheorem{definition}[theorem]{Definition}
\newtheorem{example}[theorem]{Example}

\theoremstyle{remark}
\newtheorem{remark}[theorem]{Remark}
\newtheorem{notation}{Notation}

%%%% 定理类环境的定义 %%%%
%\newtheorem{example}{Example} [section]
%\newtheorem{algorithm}[Example]{Algorithm}
%\newtheorem{theorem}[Example]{Theorem}
%\newtheorem{definition}[Example]{Definition}
%\newtheorem{axiom}[Example]{Axiom}
%\newtheorem{prop}[Example]{Property}
%\newtheorem{proposition}[Example]{Proposition}
%\newtheorem{lemma}[Example]{Lemma}
%\newtheorem{corollary}[Example]{Corollary}
%\newtheorem{remark}[Example]{Remark}
%\newtheorem{condition}[Example]{Condition}
%\newtheorem{conclusion}[Example]{Conclusion}
%\newtheorem{assumption}[Example]{Assumption}
%\newtheorem{problem}[Example]{Problem}

\title{\Large  Optimization of conditional convex risk measures }
%and for a conditional entropic risk measures of $L^{p}_{\mathcal{F}}(\mathcal{E})$--type}
\author{
{\normalsize Tiexin Guo \thanks{
Email addresses: tiexinguo@csu.edu.cn } }\\
\textit{\small School of Mathematics and Statistics, Central South University, Changsha 410083, China.}}
\date{}
\maketitle

\renewcommand{\baselinestretch}{1.2}
\large\normalsize
\noindent \rule[0.5pt]{14.5cm}{0.6pt}\\
\noindent
\textbf{Abstract}\\
Optimization of conditional convex risk measure is a central theme in dynamic portfolio selection theory, which has not yet systematically studied in the previous literature perhaps since conditional convex risk measures are neither random strictly convex nor random coercive. The purpose of this paper is to give some basic results on the existence and uniqueness on this theme, in particular our results for conditional monotone mean--variance and conditional entropic risk measures are complete and deep. As the basis for the work of this paper, this paper first begins with a brief introduction to random functional analysis, including the historical backgrounds for its birth and some important advances. This paper then further surveys some recent progress in random convex analysis and its applications to conditional convex risk measures. Finally, based on these, we establish a concise sufficient and necessary condition for a return to be a solution to the optimization problem of conditional monotone mean--variance. We also make use of the recently developed theory of $L^0$--convex compactness to establish the existence of the optimization problem of conditional entropic risk measure when the conditional mean of returns is given and the returns fall within a random closed ball. Besides, the related uniqueness problems are also solved.

\vspace{0.3cm}
\noindent
Key words:\\
 complete random normed module; $L^0$--convex compactness; conditional convex risk measure; conditional optimization; portfolio selection; existence and uniqueness.

\noindent \rule[0.5pt]{14.5cm}{0.6pt}\\
\noindent
\textbf{Mathematics Subject Classification (2010): 46A50, 46A16, 46N10, 91B30, 91G10}
\section{Introduction}
Random functional analysis is functional analysis based on random metric spaces, random normed modules, random inner product modules and random locally convex modules, which was born in the course of the development of the theory of probabilistic metric spaces. Probabilistic metric spaces (briefly, $PM$ spaces) were initiated by K.Menger and founded by B.Schweizer and A.Sklar, of which the main idea is to use a probability distribution function to define the probabilistic distance between any given two points, see \cite{SS83} for the fundamentals and historical survey for the theory of $PM$ spaces. Similarly, A.N.\v{S}erstn\v{e}v introduced in 1962 the notion of a probabilistic normed space (briefly, a $PN$ space) as a probabilistic generalization of an ordinary normed space. Therefore, Menger's theory of $PM$ spaces initiated the idea of randomizing space theory of functional analysis. Naturally, one may also consider the randomization of traditional space theory by starting from the standard measure--theoretic model of probability theory, which leads A.\v{S}pac\v{e}k to formulate the notion of an uniform random metric space, subsequently the notions of a general random metric space (briefly, an $RM$ space) and random normed space (briefly, an $RN$ space) were introduced and studied in \cite[Chapters 9 and 15]{SS83}, whose main idea is to use a nonnegative random variable to express the random distance between two points or a random norm of a vector. It is well known that the theory of $PM$ spaces had been deeply developed before 1983, the theory of $PN$ spaces, however, has not obtained any substantial advance up to now. This is mainly because $PN$ spaces are not locally convex in general under the traditional $(\varepsilon,\lambda)$--topology so that the theory of conjugate spaces fails in the study of $PN$ spaces.

Our breakthrough began with the study of $RN$ spaces in 1989. Compared with general $PN$ spaces, $RN$ spaces have the additional measure--theoretic structure and stronger geometric structure, by means of which Guo introduced the notion of an almost surely (briefly, a.s.) bounded random linear functional and proved the Hahn--Banach theorem for such random linear functionals in \cite{Guo89}, which leads to the development of the theory of random conjugate spaces for $RN$ spaces. Further, Guo introduced the notions of a random normed module (briefly, an $RN$ module) and random inner product module (briefly, an $RIP$ module) in \cite{YZG91,Guo92,Guo93}, which makes the theory of random conjugate spaces obtain a systematic and deep development, for example, the representation theorems of random conjugate spaces for concrete and important $RN$ modules were obtained in \cite{Guo96,GY96}, even the James theorem characterizing random reflexivity was established in \cite{GL05}, and the closely related issues were also thoroughly studied in \cite{Guo08}. Here, we would like to emphasize that it is the module structure of an $RN$ module that has played a crucial role in these developments. Subsequently, the notion of a random locally convex module (briefly, an $RLC$ module), as a random generalization of a locally convex space, was introduced by Guo in \cite{Guo01} for the systematic development of the theory of random conjugate spaces so that a separation theorem between a point and a closed $L^0$--convex set was established in \cite{GXC09} and the theory of random duality was also deeply developed in \cite{GC09}. It should be pointed out that the pre--2009 work on random functional analysis was carried out under the $(\varepsilon,\lambda)$--topology, here we should also mention the following two famous works whcih are independent of the above--stated work on random functional analysis: the first is Hansen and Richard's work \cite{HR87} where they established the Riesz representation theorem on a class of complete random inner product modules (namely conditional $L^2$--spaces) which played a crucial role in representing a conditional pricing function; the second is Haydon, Levy and Raynaud's work \cite{HLR91} where they also introduced the notion of an $RN$ module (called a randomly normed $L^0$--module in terms of \cite{HLR91}) as a tool for the study of ultrapowers of Lebesgue--Bochner function spaces.

The theory of risk measures occupies a central place in quantative risk management. At the early stage of measuring risk, variance and $VaR$ were employed as risk measures in \cite{M52,J01}, but variance is not monotone and $VaR$ is not subadditive, their applications would lead to some unreasonable conclusions, which also motivates an axiomatic study of risk measures in \cite{ADEH99} where the notion of a coherent risk measure was introduced. Subsequently, a more general convex risk measure was introduced in \cite{FS11,FRG02}. It is well known that classical convex analysis \cite{ET99,P89} is a foundation for convex risk measures.

In multiperiod or dynamic financial mathematics, the notion of a conditional or dynamic convex risk measure was introduced in \cite{DS05,B04,FKV12} in order to precisely measure risk by means of information available from financial markets. At first, classical convex analysis could still deal with conditional convex risk measures on the model space of bounded financial positions \cite{DS05,B04}, but classical convex analysis does not apply to conditional convex risk measures on the model space of unbounded financial positions. This motivated Filipovi\'{c}, Kupper and Vogelpoth in \cite{FKV09} to make an attempt to establish a kind of convex analysis over a locally $L^0$--convex module (called $L^0$--convex analysis in \cite{FKV09}) in order to provide a new analytic foundation for the study of conditional convex risk measures, in particular their work also led to another kind of topology--the locally $L^0$--convex topology for random locally convex modules. Thus, Filipovi\'{c} et.al's work \cite{FKV09} is, without doubt, important, whose importance lies in establishing, for the first time, the link between random functional analysis and the theory of conditional convex risk measures, and also providing another approach to the development of random functional analysis under the locally $L^0$--convex topology.

Following \cite{FKV09}, Guo \cite{Guo10} was devoted to establishing some inherent connections between the two theories derived from the two topologies (namely, the $(\varepsilon,\lambda)$--topology and the locally $L^0$--convex topology) for a random locally convex module. The success of \cite{Guo10} lies in introducing the notion of the countable concatenation property (also called $\sigma$--stability) for a subset of an $L^0$--module, which has played a crucial role in the process of establishing the two theories. Since the paper \cite{FKV09} did not realize the importance of the algebraic notion of $\sigma$--stability, there are some serious defects in $L^0$--convex analysis established in \cite{FKV09}, see \cite{WG15,Zap17} for details. To lay a solid analytic foundation for the theory of conditional risk measures, Guo et.al \cite{GZZ15a,GZZ15b,GZWYYZ17} reestablished the $L^0$--convex analysis by choosing random locally convex modules rather than locally $L^0$--convex modules as the space framework and simultaneously considering the two kinds of topologies. Convex analysis established in such a way is naturally called random convex analysis in accordance with the idea of random functional analysis, at the same time Guo et.al \cite{GZZ14} unified various kinds of conditional convex risk measures presented in \cite{FKV12} together with their conditional convex dual representation theorems. Further, based on the work and the notion of $\sigma$--stability of \cite{Guo10}, a series of important developments can be carried out in \cite{CHKP16,CKV15,DKKS13,FM14,Guo13,GS11,GXC09,GZWG19,GZWW20,GZWYYZ17,GZZ14,GZZ15a,GZZ15b,Wu12,Wu13,WG15,Zap17}.

Although variance, as a risk measure, has its own shortcomings, it also possesses some good behaviors, for example, it is a coercive and strictly convex continuous function on the constraint set in problem (a closed convex set of the Hilbert space $L^2$) so that the existence and uniqueness on the optimization problem are both not a problem. To overcome the shortcomings of variance, people considers replacing it with the semivariance in \cite{JYZ05} and the best risk measures--convex risk measures in portfolio selection in \cite{MMRT09,RS06} and the references therein. However, convex risk measures are neither strictly convex nor coercive, which makes the existence and uniqueness a delicate and complicated problem. According to the fundamental theorem of convex optimization \cite{ET99}: let $(E,\mathcal{T})$ be a Hausdorff locally convex space, $C$ a weakly compact convex subset of $E$ and $f : C \to (-\infty, +\infty]$ a proper lower semicontinuous convex function, then there exists $x \in C$ such that $f(x) = \min \{ f(y) : y \in C \}$, the optimization problem of convex risk measures are within convex optimization, but it needs a different treatment since convex risk measures have their own special properties. On the other hand, finance and economy also forces people to consider the optimization problems of convex functions on not locally convex spaces, at this time it makes no sense to speak of weak compactness for a convex set in a not locally convex space, Gordon \v{Z}itkovi\'{c} introduced in \cite{Zit10} the notion of convex compactness for a convex set in a not locally convex space, which better solved the corresponding optimization problem.

The central purpose of this paper is to study the optimization problem of conditional convex risk measures (including convex risk measures as a special case). Also, optimization of conditional convex risk measures is to prepare for the study of continuous--time conditional mean--conditional convex risk measure portfolio selection, see \cite{GZWG19} for details. Hansen and Richard \cite{HR87} is the first to give the conditional version of Markowitz mean--variance analysis, namely consider the optimization problem of conditional variance when the conditional mean of returns is given. Since the conditional variance has the properties similar to variance, the existence and uniqueness are both not a problem, the really nontrivial contribution of \cite{HR87} is to develop the theory of a class of conditional Hilbert spaces (namely, conditional $L^2$--spaces), this class of conditional $L^2$--spaces are a class of special complete $RIP$ modules. The essence of our work of this paper is to consider the same optimization problem as in \cite{HR87} when the conditional variance used in \cite{HR87} is replaced by conditional convex risk measures. In fact, Hansen and Richard \cite{HR87} is also the first to study optimization of $L^0$--convex functions on closed $L^0$--convex sets. Since complete $RN$ and $RIP$ modules are not locally convex spaces in general under the $(\varepsilon,\lambda)$--topology, motivated by the work \cite{Zit10} Guo et.al \cite{GZWW20} had recently developed the theory of $L^0$--convex compactness for closed $L^0$--convex subsets in complete $RN$ modules, as applications we gave the fundamental theorem of random convex optimization and generalized the optimization results of conditional variance of \cite{HR87}, see \cite{GZWW20} for details. In particular, the characterization theorem for a closed $L^0$--convex subset of a complete $RN$ module to have $L^0$--convex compactness, which was established in \cite{GZWW20} by using the theory of random conjugate spaces, will play a crucial role in this paper as well as in \cite{GZWW20}. Precisely, this paper first establishes a concise sufficient and necessary condition for a return to be a solution to the optimization problem of conditional monotone mean--variance, and then makes use of the recently developed theory of $L^0$--convex compactness to obtain an existence result of the optimization problem of conditional entropic risk measure when the conditional mean of returns is given and the returns fall within a random closed ball. Besides, the related uniqueness problems are also solved by the different methods for the above--mentioned two conditional convex risk measures.

The remainder of this paper is organized as follows. Section \ref{section2} recapitulates some fundamentals on random functional analysis and random convex analysis. Section \ref{section3}, based on random convex analysis, further studies conditional convex risk measures and in particular conditional monotone mean--variance risk measure (including its $L^0$--Gateaux differentiability and giving a concise form of it), where we also illustrates that the monotone mean--variance risk measure and entropic risk measure are neither strictly convex nor coercive, and hence their conditional versions are neither random strictly convex nor random coercive. Finally, Section \ref{section4} first establishes a concise sufficient and necessary condition for the optimization problem of conditional monotone mean--variance to allow a solution and also considers the related uniqueness problem, and then we makes use of the theory of $L^0$--convex compactness to obtain the existence of the optimization problem for conditional entropic risk measure when the conditional mean of returns is given and the returns are within a random closed ball, at the same time the theory of $RIP$ modules is further used to obtain the uniqueness of this problem.

\section{Some fundamentals on random functional analysis and random convex analysis} \label{section2}

Throughout this paper, unless otherwise stated, $K$ denotes the scalar field $R$ of real numbers or $C$ of complex numbers, $(\Omega,\mathcal{F},P)$ a probability space and $L^0(\mathcal{F},K)$ the algebra of equivalence classes of $K$--valued $\mathcal{F}$--measurable random variables on $\Omega$, on which the scalar multiplication, addition and algebraic multiplication operations are induced from the corresponding pointwise operations of random variables. Specially, $L^0(\mathcal{F}) := L^0(\mathcal{F},R)$ and $\bar{L}^0(\mathcal{F})$ is the set of equivalence classes of extended real--valued $\mathcal{F}$--measurable random variables on $\Omega$. Here, equivalence is understood as usual, namely two random variables are equivalent if they equal $P$--almost surely. Proposition \ref{proposition2.1} below can be regarded as a random version of the classical supremum and infimum principle. The partial order $\leq$ on $\bar{L}^0(\mathcal{F})$ is defined by $\xi \leq \eta$ iff $\xi^0(\omega) \leq \eta^0(\omega)$ for almost surely all $\omega \in \Omega$, where $\xi^0$ and $\eta^0$ are arbitrarily chosen representatives of $\xi$ and $\eta$ respectively.
\begin{proposition}\label{proposition2.1} \cite{DS58}.
 $(\bar{L}^0(\mathcal{F}), \leq)$ is a complete lattice, for any nonempty subset $H$ of $\bar{L}^0(\mathcal{F})$, $\bigvee H$ and $\bigwedge H$ denote the supremum and infimum of $H$, respectively, and the following statements hold:
\begin{enumerate}[(1)]
\item There exists two sequences $\{ a_n, n \in N \}$ and $\{ b_n, n \in N \}$ in $H$ such that $\bigvee_{n \geq 1} a_n = \bigvee H$ and $\bigwedge_{n \geq 1}b_n = \bigwedge H$.
\item If $H$ is directed upwards $($downwards$)$, namely there exists $h_3 \in H$ for any $h_1$ and $h_2 \in H$ such that $h_3 \geq h_1 \bigvee h_2$ $($resp., $h_3 \leq h_1 \bigwedge h_2)$, then $\{ a_n, n \in N \}$ $($resp., $\{ b_n, n \in N \})$ can be chosen as nondecreasing $($resp., nonincreasing$)$.
\item As a sublattice of $\bar{L}^0(\mathcal{F})$, $L^0(\mathcal{F})$ is conditionally complete, namely any nonempty subset with an upper $($resp., a lower$)$ bound has a supremum $($resp., an infimum$)$.
\end{enumerate}
\end{proposition}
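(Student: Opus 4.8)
The plan is to isolate the one genuinely nontrivial point — that $\bigvee H$ exists in $\bar{L}^0(\mathcal{F})$ for every nonempty $H$ — since everything else follows routinely. Once this is established, the infimum is obtained by duality (apply the supremum result to $\{-h : h \in H\}$, noting that $x \mapsto -x$ is an order-reversing bijection of $\bar{L}^0(\mathcal{F})$ with $\bigvee\{-h : h \in H\} = -\bigwedge H$), the complete lattice assertion is just the conjunction of existence of $\bigvee H$ and $\bigwedge H$ together with the constants $+\infty$ and $-\infty$ as top and bottom, and statements (1)--(3) are short consequences of the construction.

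First I would reduce to a uniformly bounded, upward-directed family. Fix a strictly increasing bijection $\varphi\colon[-\infty,+\infty]\to[0,1]$ (for instance an affine rescaling of $\arctan$); it induces an order isomorphism of $\bar{L}^0(\mathcal{F})$ onto the set of equivalence classes of $[0,1]$-valued $\mathcal{F}$-measurable functions, preserving all suprema and infima, so it suffices to treat $H$ consisting of such $[0,1]$-valued classes, in particular of $P$-integrable ones. Then replace $H$ by the set $\mathcal{H}$ of all finite suprema $h_{1}\vee\cdots\vee h_{n}$ of members of $H$; this $\mathcal{H}$ is directed upwards, still $[0,1]$-valued, and an element of $\bar{L}^0(\mathcal{F})$ is an upper bound of $H$ if and only if it is an upper bound of $\mathcal{H}$, so $\bigvee H=\bigvee\mathcal{H}$ as soon as either side exists.

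The core step is a maximizing-sequence argument. Let $E$ denote expectation under $P$ and set $\alpha=\sup\{E[g]:g\in\mathcal{H}\}\in[0,1]$; choose $g_{n}\in\mathcal{H}$ with $E[g_{n}]\to\alpha$ and, using that $\mathcal{H}$ is directed upwards, replace $g_{n}$ by $g_{1}\vee\cdots\vee g_{n}$ so that $\{g_{n}\}$ is nondecreasing with $E[g_{n}]\uparrow\alpha$. Put $g^{*}:=\sup_{n}g_{n}$, which is again a $[0,1]$-valued class, and by the monotone convergence theorem $E[g^{*}]=\alpha$. To see $g^{*}=\bigvee\mathcal{H}$: for any $h\in\mathcal{H}$ one has $h\vee g_{n}\in\mathcal{H}$ and $h\vee g_{n}\uparrow h\vee g^{*}$, so monotone convergence gives $E[h\vee g^{*}]=\lim_{n}E[h\vee g_{n}]\le\alpha=E[g^{*}]$; since $h\vee g^{*}\ge g^{*}$ pointwise, this forces $h\vee g^{*}=g^{*}$, i.e.\ $h\le g^{*}$, so $g^{*}$ is an upper bound of $\mathcal{H}$; and any upper bound $u$ of $\mathcal{H}$ satisfies $u\ge g_{n}$ for all $n$, hence $u\ge g^{*}$. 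Therefore $\bigvee H=\bigvee\mathcal{H}=g^{*}$. I expect this passage from a directed, expectation-maximizing sequence to a genuine least upper bound — via the $h\vee g_{n}$ trick combined with monotone convergence — to be the main obstacle, together with the preliminary bounded normalization that keeps the relevant expectations finite.

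It remains to harvest (1)--(3). For (1): pulling back through $\varphi$, the constructed $g^{*}$ is a countable pointwise supremum of finite suprema of members of $H$, so the countably many members of $H$ involved, listed as a single sequence $\{a_{m}\}\subseteq H$, satisfy $\bigvee_{m}a_{m}=\bigvee H$, and dually for $\bigwedge$. For (2): if $H$ is directed upwards, build $\{c_{m}\}\subseteq H$ recursively by $c_{1}=a_{1}$ and $c_{m+1}\in H$ with $c_{m+1}\ge c_{m}\vee a_{m+1}$; then $\{c_{m}\}$ is nondecreasing, each $c_{m}\in H$ gives $\bigvee_{m}c_{m}\le\bigvee H$, while $\bigvee_{m}c_{m}\ge\bigvee_{m}a_{m}=\bigvee H$, so $\bigvee_{m}c_{m}=\bigvee H$; the downward case is dual. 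For (3): if $\varnothing\ne H\subseteq L^0(\mathcal{F})$ has an upper bound $u\in L^0(\mathcal{F})$, then for any fixed $h_{0}\in H$ we have $-\infty<h_{0}\le\bigvee H\le u<+\infty$ $P$-a.s., so $\bigvee H\in L^0(\mathcal{F})$ and is the supremum of $H$ inside the sublattice $L^0(\mathcal{F})$; the lower-bounded case is dual.
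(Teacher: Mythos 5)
Your proof is correct, and it is essentially the classical essential-supremum argument (reduce to $[0,1]$-valued classes by an order isomorphism, pass to the upward-directed family of finite suprema, maximize the expectation, and use monotone convergence) that underlies the result the paper simply cites from Dunford--Schwartz without proof. The harvesting of (1)--(3) from the construction is also sound, so nothing further is needed.
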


As usual, for $\xi$ and $\eta$ in $\bar{L}^0(\mathcal{F})$, $\xi > \eta$  means $\xi \geq \eta$ and $\xi \neq \eta$ , and for any $A \in \mathcal{F}$, $\xi > \eta$ on $A$ means $\xi^0(\omega) > \eta^0(\omega)$ for almost surely all $\omega \in A$, which $\xi^0$ and $\eta^0$ are arbitrarily chosen representatives of $\xi$ and $\eta$, respectively.

Besides, we always employ the following notations:

$L^0_+(\mathcal{F}) = \{ \xi \in L^0(\mathcal{F}) : \xi \geq 0 \}$;

$L^0_{++}(\mathcal{F}) = \{ \xi \in L^0(\mathcal{F}) : \xi > 0$ on  $ \Omega  \}$;

$I_A$ stands for the characteristic function of $A$ and $\tilde{I}_A$ the equivalence class of $I_A$ for any $A \in \mathcal{F}$.

\begin{definition}\label{definition2.2} \cite{Guo92,Guo93,YZG91}
An ordered pair $(E,\| \cdot \|)$ is called a random normed module $($briefly, an $RN$ module$)$ over the scalar field $K$ with base $(\Omega,\mathcal{F},P)$ if $E$ is a left module over the algebra $L^0(\mathcal{F},K)$ $($briefly, an $L^0(\mathcal{F},K)$--module$)$ and $\| \cdot \|$ is a mapping from $E$ to $L^0_+(\mathcal{F})$ such that the following axioms are satisfied:
\begin{enumerate}[(RNM-1)]
\item $\| \xi x \| = |\xi| \|x\|$ for any $\xi \in L^0(\mathcal{F}, K)$ and any $x \in E$;
\item $\|x+y\| \leq \|x\| + \|y\|$ for all $x$ and $y \in E$;
\item $\|x\| = 0$ implies $x = \theta$ $($the null in $E)$.
\end{enumerate}
$\| \cdot \|$ is called the $L^0$--norm on $E$ and $\|x\|$ the $L^0$--norm of $x$ for any $x \in E$. A mapping $\| \cdot \| : E \to L^0_+(\mathcal{F})$ is called an $L^0$--seminorm on $E$ if it only satisfies $(RNM-1)$ and $(RNM-2)$ as above.
\end{definition}

Similarly, one can have the notion of a random inner product module (briefly, an $RIP$ module), which was first introduced by Guo in \cite{Guo93}, see also \cite{Guo10}.

\begin{example}\label{example2.3}
$L^0(\mathcal{F},K)$ is an $RN$ module over $K$ with base $(\Omega,\mathcal{F},P)$ with its $L^0$--norm $\| \cdot \|$ defined by $\| x \| = |x|$ for any $x \in L^0(\mathcal{F},K)$, where $|x|$ denotes the absolute value of $x$, we often denote $\|\cdot\|$ by $|\cdot|$. $L^0(\mathcal{F},K)$ is also an $RIP$ module over $K$ with base $(\Omega,\mathcal{F},P)$, which is endowed with the $L^0$--inner product $(\cdot, \cdot)$ defined by $(x,y) = x \cdot \bar{y}$ for any $x$ and $y$ in $L^0(\mathcal{F},K)$, where $\bar{y}$ denotes the complex conjugate of $y$.
\end{example}

To give Example \ref{example2.4} below, which is very important in dynamic risk measures, let us first recall the notion of generalized mathematical expectation as follows. Let $(\Omega,\mathcal{E},P)$ be a probability space, $\mathcal{F}$ a $\sigma$--subalgebra of $\mathcal{E}$ and $E[\cdot~|~\mathcal{F}] : L^1(\mathcal{E}) \to L^1(\mathcal{F})$ the usual expectation operator. $E[\cdot~|~\mathcal{F}]$ can be first generalized to an operator from $\bar{L}^0_+(\mathcal{E}) := \{ x \in \bar{L}^0(\mathcal{E}) : x \geq 0 \}$ to $\bar{L}^0_+(\mathcal{F})$ by $E[x~|~\mathcal{F}] = \lim_{n \to \infty} E[x\wedge n~|~\mathcal{F}]$ for any $x \in \bar{L}^0_+(\mathcal{E})$, then $x \in L^0(\mathcal{E})$ is said to be conditionally integrable with respect to $\mathcal{F}$ (briefly, wrt $\mathcal{F}$) if $E[|x|~|~\mathcal{F}] \in L^0_+(\mathcal{F})$, at which time $E[x~|~\mathcal{F}] = E[x^+~|~\mathcal{F}] - E[x^-~|~\mathcal{F}]$, where $x^+ = x \vee 0$ and $x^- = (-x) \vee 0$. Let $L^1_{\mathcal{F}}(\mathcal{E}) = \{ x \in L^0(\mathcal{E}) : E[|x|~|~\mathcal{F}] \in L^0_+(\mathcal{F}) \}$, then it is easy to check that $x \in L^1_{\mathcal{F}}(\mathcal{E})$ if and only if (briefly, iff) $x = \xi \cdot y$ for some $\xi \in L^0(\mathcal{F})$ and $y \in L^1(\mathcal{E})$, namely $L^1_{\mathcal{F}}(\mathcal{E})$ is exactly the $L^0(\mathcal{F})$--module generated by $L^1(\mathcal{E})$. Eventually the usual expectation operator $E[\cdot~|~\mathcal{F}] : L^1(\mathcal{E}) \to L^1(\mathcal{F})$ can be extended to the current generalized expectation operator $E[\cdot~|~\mathcal{F}] : L^1_{\mathcal{F}}(\mathcal{E}) \to L^0(\mathcal{F})$. From now on, $E[\cdot~|~\mathcal{F}]$ always denotes the generalized expectation operator.
\begin{example}\cite{FKV09,HR87}\label{example2.4}
Let $p$ be a positive real number such that $1 \leq p < +\infty$, $(\Omega,\mathcal{E},P)$ and $\mathcal{F}$ as above. Let $L^p_{\mathcal{F}}(\mathcal{E})= \{ x \in L^0(\mathcal{E}) :  E[|x|^p ~|~ \mathcal{F}] \in L^0_+(\mathcal{F}) \}$, then it is easy to check that $ L^p_{\mathcal{F}}(\mathcal{E})= L^0(\mathcal{F}) \cdot L^p(\mathcal{E})$, namely $ L^p_{\mathcal{F}}(\mathcal{E})$ is the $L^0(\mathcal{F})$--module generated by $ L^p(\mathcal{E})$. Define $|||\cdot |||_p: L^p_{\mathcal{F}}(\mathcal{E}) \to L^0_{+}(\mathcal{F})$ by $|||x|||_p = E[|x|^p ~|~ \mathcal{F}]^{1/p}$ for any $x \in L^p_{\mathcal{F}}(\mathcal{E})$, then $(L^p_{\mathcal{F}}(\mathcal{E}),||| \cdot |||_p)$ is an $RN$ module over $R$ with base $(\Omega,\mathcal{F},P)$, called the conditional $L^p$--space in \cite{FKV09}. When $p=2$, $L^2_{\mathcal{F}}(\mathcal{E})$ is just the conditional Hilbert space in \cite{HR87}, which is endowed with the $L^0$--inner product $(x,y) = E[x \cdot y ~|~ \mathcal{F}]$ for any $x$ and $ y \in L^2_{\mathcal{F}}(\mathcal{E})$, then it is obviously a special $RIP$ module.
\end{example}

\begin{definition}\cite{Guo01}\label{definition2.5}
An ordered pair $(E,\mathcal{P})$ is called a random locally convex module (briefly, an $RLC$ module) over $K$ with base $(\Omega,\mathcal{F},P)$ if $E$ is an $L^0(\mathcal{F},K)$--module and $\mathcal{P}$ is a family of $L^0$--seminorms on $E$ such that $\bigvee\{ \|x\| : \| \cdot \| \in \mathcal{P} \} = 0 $ iff $x = \theta$.

Let $(E,\mathcal{P})$ be an $RLC$ module over $K$ with base $(\Omega,\mathcal{F},P)$ and $\mathcal{P}_f = \{ Q \subset \mathcal{P}: Q$ is a finite subfamily of $\mathcal{P} \}$. For any $Q \in \mathcal{P}_f$, $\|\cdot\|_Q : E \to L^0_{+}(\mathcal{F})$ defined by $\|x\|_Q = \bigvee\{ \|x\| : \| \cdot \| \in Q \}$ for any $x \in E$ is still an $L^0$--seminorm on $E$. $\mathcal{P}$ is said to have the countable concatenation property \cite{FKV09} if $\sum^{\infty}_{n=1} \tilde{I}_{A_n} \|\cdot\|_{Q_n}$ still belongs to $\mathcal{P}$ for any countable partition $\{ A_n : n \in N \}$ of $\Omega$ to $\mathcal{F}$ and any countable subset $\{ Q_n : n \in N \}$ of $\mathcal{P}_f$.
\end{definition}

\begin{proposition}\cite{Guo01}\label{proposition2.6}
Let $(E,\mathcal{P})$ be an $RLC$ module over $K$ with base $(\Omega,\mathcal{F},P)$. For any given positive numbers $\varepsilon$ and $\lambda$ such that $0 < \lambda <1$ and $Q \in \mathcal{P}_f$, let $N_{\theta}(Q,\varepsilon,\lambda) = \{ x \in E : P\{ \omega \in \Omega : \|x\|_Q(\omega) < \varepsilon \} > 1-\lambda \}$, then $\{ N_{\theta}(Q,\varepsilon,\lambda) : Q \in \mathcal{P}_f$, $\varepsilon >0, 0 < \lambda <1 \}$ forms a local base of some Hausdorff linear topology for $E$, called the $(\varepsilon,\lambda)$--topology induced by $\mathcal{P}$, denoted by $\mathcal{T}_{\varepsilon,\lambda}$. It is easy to see that the $(\varepsilon,\lambda)$--topology for $L^0(\mathcal{F},K)$ is just the usual topology of convergence in probability such that $L^0(\mathcal{F},K)$ is a topological algebra, and it is also straightforward to check that $(E,\mathcal{T}_{\varepsilon,\lambda})$ is a Hausdorff topological module over the topological algebra $L^0(\mathcal{F},K)$.
\end{proposition}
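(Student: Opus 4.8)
The plan is to verify that the family $\mathcal{B}=\{N_{\theta}(Q,\varepsilon,\lambda): Q\in\mathcal{P}_f,\ \varepsilon>0,\ 0<\lambda<1\}$ satisfies the classical criterion for being a local base at the origin of a (necessarily unique) Hausdorff linear topology on $E$, and then to upgrade this to the statement about the topological module structure. The only algebraic input needed is that each $\|\cdot\|_Q$ is an $L^0$--seminorm, so that $\|\xi x\|_Q=|\xi|\,\|x\|_Q$ and $\|x+y\|_Q\leq\|x\|_Q+\|y\|_Q$; the only probabilistic input is the elementary inequality $P(A\cap B)\geq P(A)+P(B)-1$ together with the fact that every element of $L^0_+(\mathcal{F})$ is finite valued almost surely.

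Concretely I would check, in order: (i) $\theta\in N_{\theta}(Q,\varepsilon,\lambda)$ for all $Q,\varepsilon,\lambda$, since $\|\theta\|_Q=0$; (ii) $N_{\theta}(Q_1\cup Q_2,\varepsilon_1\wedge\varepsilon_2,\lambda_1\wedge\lambda_2)\subseteq N_{\theta}(Q_1,\varepsilon_1,\lambda_1)\cap N_{\theta}(Q_2,\varepsilon_2,\lambda_2)$, using $\|x\|_{Q_1\cup Q_2}=\|x\|_{Q_1}\vee\|x\|_{Q_2}$, so that $\mathcal{B}$ is a filter base; (iii) $N_{\theta}(Q,\varepsilon/2,\lambda/2)+N_{\theta}(Q,\varepsilon/2,\lambda/2)\subseteq N_{\theta}(Q,\varepsilon,\lambda)$, from the triangle inequality for $\|\cdot\|_Q$ and $P(A\cap B)\geq P(A)+P(B)-1$ (this is also continuity of addition); (iv) each $N_{\theta}(Q,\varepsilon,\lambda)$ is balanced, indeed $L^0$--balanced in the sense that $\xi N_{\theta}(Q,\varepsilon,\lambda)\subseteq N_{\theta}(Q,\varepsilon,\lambda)$ whenever $|\xi|\leq 1$, since $\|\xi x\|_Q=|\xi|\,\|x\|_Q\leq\|x\|_Q$; (v) each $N_{\theta}(Q,\varepsilon,\lambda)$ is absorbing, because for fixed $x$ one may pick $M>0$ with $P\{\|x\|_Q<M\}>1-\lambda$ (possible as $\|x\|_Q<\infty$ a.s.), whence $x\in tN_{\theta}(Q,\varepsilon,\lambda)$ for every scalar $t$ with $|t|>M/\varepsilon$. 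By the standard construction of a vector topology from such a local base, there is a unique linear topology $\mathcal{T}_{\varepsilon,\lambda}$ on $E$ for which $\mathcal{B}$ is a local base at $\theta$, and it is Hausdorff precisely because $\bigcap\mathcal{B}=\{\theta\}$: if $x$ lies in every $N_{\theta}(Q,\varepsilon,\lambda)$, then letting $\lambda\downarrow 0$ gives $\|x\|_Q<\varepsilon$ a.s. for every $\varepsilon>0$, hence $\|x\|_Q=0$ a.s. for every $Q\in\mathcal{P}_f$, so $\|x\|=0$ for each $\|\cdot\|\in\mathcal{P}$, and therefore $x=\theta$ by the defining axiom $\bigvee\{\|x\|:\|\cdot\|\in\mathcal{P}\}=0\iff x=\theta$ of an $RLC$ module.

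For the remaining assertions, take $E=L^0(\mathcal{F},K)$ with $\mathcal{P}=\{|\cdot|\}$: then $N_{\theta}(\varepsilon,\lambda)=\{\xi: P\{|\xi|\geq\varepsilon\}<\lambda\}$, which is exactly the usual neighbourhood base of $0$ for convergence in probability, so $\mathcal{T}_{\varepsilon,\lambda}$ is that topology; and the continuity of the algebra multiplication follows from $\xi\eta-\xi_0\eta_0=\xi(\eta-\eta_0)+(\xi-\xi_0)\eta_0$ together with a truncation argument. The same decomposition $\xi x-\xi_0 x_0=\xi(x-x_0)+(\xi-\xi_0)x_0$, combined with the estimate $\|\xi x-\xi_0 x_0\|_Q\leq(|\xi_0|+|\xi-\xi_0|)\,\|x-x_0\|_Q+|\xi-\xi_0|\,\|x_0\|_Q$, yields continuity of the scalar multiplication $L^0(\mathcal{F},K)\times E\to E$ at an arbitrary point $(\xi_0,x_0)$, so $(E,\mathcal{T}_{\varepsilon,\lambda})$ is a Hausdorff topological module over the topological algebra $L^0(\mathcal{F},K)$.

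The one place that genuinely requires care, and the main obstacle, is this last continuity of scalar multiplication: since $|\xi_0|$ and $\|x_0\|_Q$ are only almost surely finite (not bounded), one cannot control $(|\xi_0|+|\xi-\xi_0|)\|x-x_0\|_Q$ uniformly. The remedy is to fix thresholds $M,M'$ with $P\{\|x_0\|_Q\leq M\}$ and $P\{|\xi_0|\leq M'\}$ close to $1$, restrict attention to the event where these hold together with $|\xi-\xi_0|<(\varepsilon/(2M))\wedge 1$ and $x-x_0\in N_{\theta}(Q,\varepsilon/(2(M'+1)),\cdot)$, and then distribute the allotted probability $\lambda$ across the finitely many exceptional events by repeated use of $P(A\cap B)\geq P(A)+P(B)-1$; on the good event $\|\xi x-\xi_0 x_0\|_Q<\varepsilon$. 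This bookkeeping is routine but must be carried out explicitly, whereas every other step is an immediate consequence of the $L^0$--seminorm axioms and elementary measure theory.
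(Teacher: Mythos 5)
Your proposal is correct: the verification of the filter-base axioms (balanced, absorbing, $V+V\subseteq U$), the Hausdorff argument via $\bigvee\{\|x\|:\|\cdot\|\in\mathcal{P}\}=0\Rightarrow x=\theta$, and the truncation bookkeeping for joint continuity of the module multiplication are all sound. The paper itself gives no proof of this proposition --- it is quoted from \cite{Guo01} and the remaining assertions are declared ``easy'' and ``straightforward to check'' --- and your argument is exactly the standard verification that is being left to the reader, so there is nothing to contrast.
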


From now on, the $(\varepsilon,\lambda)$--topology for any $RLC$ module $(E,\mathcal{P})$ is always denoted by $\mathcal{T}_{\varepsilon,\lambda}$ whenever no confusion occurs. As above, $(L^0(\mathcal{F},K),\mathcal{T}_{\varepsilon,\lambda})$ is a topological algebra and $\mathcal{T}_{\varepsilon,\lambda}$ is essentially not locally convex in general, for example, the conjugate space of $(L^0(\mathcal{F},K),\mathcal{T}_{\varepsilon,\lambda})$ is trivial, namely equal to $\{ \theta \}$ when $\mathcal{F}$ is atomless. Perhaps, this leads Filipovi\'{c} et.al in \cite{FKV09} to another kind of topology for $L^0(\mathcal{F},K)$ : let $\varepsilon \in L^0_{++}(\mathcal{F})$ and $U(\varepsilon) = \{ x \in L^0(\mathcal{F},K) ~|~ |x| \leq \varepsilon \}$, then $\{ U(\varepsilon) : \varepsilon \in L^0_{++}(\mathcal{F}) \}$ forms a local base of some Hausdorff topology for $L^0(\mathcal{F},K)$ such that $L^0(\mathcal{F},K)$ becomes a topological ring. Since the topology, denoted by $\mathcal{T}_{c}$, is too strong to guarantee that $(L^0(\mathcal{F},K),\mathcal{T}_{c})$ is a linear topology (namely the scalar multiplication operation is not continuous).

Also, at the early stage, except Guo's work, all the work in \cite{FKV09,HR87,HLR91} only considers $L^0(\mathcal{F})$--modules (namely only the real case). Here, we state the work of \cite{FKV09} under the franmework of $L^0(\mathcal{F},K)$--modules for generality.
\begin{definition}\cite{GC09,FKV09}\label{definition2.7}
Let $E$ be an $L^0(\mathcal{F},K)$--module and $G$ a subset of $E$. $G$ is said to be $L^0$--convex if $\xi x + \eta y \in G$ for any $x, y \in G$ and $\xi, \eta \in L^0_+(\mathcal{F})$ such that $ \xi + \eta = 1$; $G$ is said to be $L^0$--absorbent if there exists some $\varepsilon \in L^0_{++}(\mathcal{F})$ for any $x \in E$ such that $\eta x \in G$ for any $\eta \in L^0(\mathcal{F}, K)$ such that $|\eta| \leq \varepsilon$; $G$ is said to be $L^0$--balanced if $\eta G \subset G$ for any $\eta \in L^0(\mathcal{F}, K)$ such that $|\eta| \leq 1$. Finally, a subset $F$ of $E$ is said to be absorbed by $G$ if there exists $\varepsilon \in L^0_{++}(\mathcal{F})$ such that $\eta F \subset G$ for any $\eta \in L^0(\mathcal{F},K)$ such that $|\eta| \leq \varepsilon$.
\end{definition}

A topological space $(E,\mathcal{T})$ is called a topological $L^0$--module over $L^0(\mathcal{F},K)$ \cite{FKV09} if $E$ is an $L^0(\mathcal{F},K)$--module and $(E,\mathcal{T})$ is a topological module over the topological ring  $(L^0(\mathcal{F},K),\mathcal{T}_{c})$. Filipovi\'{c} et.al \cite{FKV09} further called a topological $L^0$--module $(E,\mathcal{T})$ a locally $L^0$--convex module if $\mathcal{T}$ has a local base consisting of $L^0$--convex, $L^0$--absorbent and $L^0$--balanced subsets of $E$, at which time $\mathcal{T}$ is called a local $L^0$--convex topology for $E$. This leads to the following:
\begin{proposition}\cite{FKV09}\label{proposition2.8}
Let $(E,\mathcal{P})$ be an $RLC$ module over $K$ with base $(\Omega,\mathcal{F},P)$. Given $Q \in \mathcal{P}_f$ and $\varepsilon \in L^0_{++}(\mathcal{F})$, let $U(Q,\varepsilon) = \{ x \in E : \|x\|_Q \leq \varepsilon \}$, then $\{ U(Q,\varepsilon) : Q \in \mathcal{P}_f$ and $\varepsilon \in L^0_{++}(\mathcal{F}) \}$ forms a local base of some Hausdorff locally $L^0$--convex topology, called the locally $L^0$--convex topology $($for $E)$ induced by $\mathcal{P}$.
\end{proposition}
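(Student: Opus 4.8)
The plan is to verify the three requirements hidden in the phrase ``Hausdorff locally $L^0$--convex topology'': that $\mathcal{B}:=\{U(Q,\varepsilon):Q\in\mathcal{P}_f,\ \varepsilon\in L^0_{++}(\mathcal{F})\}$ is a filter base at $\theta$ which serves as the neighborhood base at $\theta$ of a unique translation--invariant topology $\mathcal{T}$ on $E$; that $(E,\mathcal{T})$ is a topological $L^0(\mathcal{F},K)$--module over the topological ring $(L^0(\mathcal{F},K),\mathcal{T}_c)$; and that each $U(Q,\varepsilon)$ is $L^0$--convex, $L^0$--absorbent and $L^0$--balanced, while $\mathcal{T}$ is Hausdorff.

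First I would record that $\|\cdot\|_Q$ is again an $L^0$--seminorm, so that $\|\xi x\|_Q=|\xi|\,\|x\|_Q$ and $\|x+y\|_Q\le\|x\|_Q+\|y\|_Q$ hold. From this the geometric properties of $U(Q,\varepsilon)$ are immediate: it is $L^0$--convex because $\|\xi x+\eta y\|_Q\le\xi\|x\|_Q+\eta\|y\|_Q\le(\xi+\eta)\varepsilon=\varepsilon$ whenever $\xi,\eta\in L^0_+(\mathcal{F})$ with $\xi+\eta=1$; it is $L^0$--balanced because $\|\eta x\|_Q=|\eta|\,\|x\|_Q\le\|x\|_Q$ when $|\eta|\le1$; and it is $L^0$--absorbent because, for a fixed $x\in E$, putting $\delta:=\varepsilon/(1+\|x\|_Q)\in L^0_{++}(\mathcal{F})$ gives $\|\eta x\|_Q=|\eta|\,\|x\|_Q\le\delta\|x\|_Q\le\varepsilon$ for every $\eta$ with $|\eta|\le\delta$. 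That $\mathcal{B}$ is a filter base follows from the inclusion $U(Q_1\cup Q_2,\ \varepsilon_1\wedge\varepsilon_2)\subseteq U(Q_1,\varepsilon_1)\cap U(Q_2,\varepsilon_2)$, on noting $Q_1\cup Q_2\in\mathcal{P}_f$ and $\varepsilon_1\wedge\varepsilon_2\in L^0_{++}(\mathcal{F})$.

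Next I would run the classical neighborhood--base check that upgrades $\mathcal{B}$ to a base of a module topology. Each $U(Q,\varepsilon)$ contains $\theta$ and is symmetric, $-U(Q,\varepsilon)=U(Q,\varepsilon)$ being a consequence of $L^0$--balancedness, and $U(Q,\varepsilon/2)+U(Q,\varepsilon/2)\subseteq U(Q,\varepsilon)$ holds by subadditivity; since $E$ is abelian this produces a translation--invariant group topology $\mathcal{T}$ with $\mathcal{B}$ as base at $\theta$, and continuity of addition is then automatic. The one point needing care is joint continuity of the module multiplication $L^0(\mathcal{F},K)\times E\to E$ at an arbitrary $(\xi_0,x_0)$: starting from $\|\xi x-\xi_0x_0\|_Q\le|\xi|\,\|x-x_0\|_Q+|\xi-\xi_0|\,\|x_0\|_Q$, and given $U(Q,\varepsilon)$, I would take the $\mathcal{T}_c$--neighborhood $\{\xi:|\xi-\xi_0|\le\delta\}$ of $\xi_0$ with $\delta:=1\wedge\frac{\varepsilon}{2(1+\|x_0\|_Q)}\in L^0_{++}(\mathcal{F})$ together with the requirement $x-x_0\in U\bigl(Q,\tfrac{\varepsilon}{2(1+|\xi_0|)}\bigr)$; then $|\xi|\le|\xi_0|+1$, each of the two summands is $\le\varepsilon/2$, and hence $\xi x-\xi_0x_0\in U(Q,\varepsilon)$. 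This is precisely where the ring topology $\mathcal{T}_c$, whose local base at $0$ is $\{U(\delta):\delta\in L^0_{++}(\mathcal{F})\}$, rather than $\mathcal{T}_{\varepsilon,\lambda}$, must be used; and since a topological $L^0$--module is only required to have a continuous ring action, nothing further about $K$--scalar multiplication needs to be verified.

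Finally, for Hausdorffness I would show $\bigcap\mathcal{B}=\{\theta\}$. If $x\ne\theta$, the defining axiom of an $RLC$ module gives $\bigvee H\ne0$ where $H:=\{\|x\|_Q:Q\in\mathcal{P}_f\}\subseteq L^0_+(\mathcal{F})$ (indeed $\bigvee H=\bigvee\{\|x\|:\|\cdot\|\in\mathcal{P}\}$); by Proposition \ref{proposition2.1}(1) there is a sequence $\{Q_n\}\subseteq\mathcal{P}_f$ with $\bigvee_{n}\|x\|_{Q_n}=\bigvee H\ne0$, hence $g:=\|x\|_{Q_{n_0}}\ne0$ for some $n_0$. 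Put $Q=Q_{n_0}$ and $A=\{g>0\}$, so that $P(A)>0$; taking $\varepsilon:=\tfrac{1}{2}g\,\tilde{I}_A+\tilde{I}_{A^c}\in L^0_{++}(\mathcal{F})$, the inequality $g\le\varepsilon$ fails on $A$, so $x\notin U(Q,\varepsilon)$. Hence $\mathcal{T}$ is Hausdorff and $\mathcal{B}$ is its neighborhood base at $\theta$, which yields the locally $L^0$--convex topology induced by $\mathcal{P}$. The main obstacle, modest as it is, is the bookkeeping of $\bar L^0(\mathcal{F})$--valued estimates: one must never divide by an $L^0$--element that can vanish on part of $\Omega$, which is why the normalizations $1+\|x_0\|_Q$, $1+|\xi_0|$ and the truncations $1\wedge(\cdot)$ are inserted so that all scaling parameters remain in $L^0_{++}(\mathcal{F})$.
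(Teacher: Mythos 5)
Your proposal is correct: the paper states Proposition \ref{proposition2.8} without proof, citing \cite{FKV09}, and your argument is precisely the standard verification behind that citation — the filter-base and seminorm estimates showing each $U(Q,\varepsilon)$ is $L^0$--convex, $L^0$--absorbent and $L^0$--balanced, joint continuity of the module multiplication against the $\mathcal{T}_c$--neighborhoods $U(\delta)$ of $0$ in $L^0(\mathcal{F},K)$ (with the correct normalizations keeping all scaling parameters in $L^0_{++}(\mathcal{F})$), and Hausdorffness from the defining axiom $\bigvee\{\|x\|:\|\cdot\|\in\mathcal{P}\}=0$ iff $x=\theta$. No gaps; your appeal to Proposition \ref{proposition2.1}(1) is not even needed, since a supremum of nonnegative elements is nonzero only if some member is nonzero.
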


For an $RLC$ module $(E,\mathcal{P})$, from now on we always denote its locally $L^0$--convex topology by $\mathcal{T}_c$ when no confusion occurs. A key bridge linking $\mathcal{T}_{\varepsilon,\lambda}$ and $\mathcal{T}_{c}$ is the following notion--$\sigma$--stability, also please bear in mind that $\mathcal{T}_{c}$ is much stronger than $\mathcal{T}_{\varepsilon,\lambda}$ in general.
\begin{definition}\cite{Guo10}\label{definition2.9}
Let $E$ be an $L^0(\mathcal{F},K)$--module and $G$ a subset of $E$. $G$ is said to be $\sigma$--stable (or, to have the countable concatenation property in the original terminology of \cite{Guo10} $)$ if there exists $x \in G$ for any countable partition $\{ A_n : n \in N \}$ of $\Omega$ to $\mathcal{F}$ and any sequence $\{ x_n : n \in N \}$ of $G$ such that $\tilde{I}_{A_n} \cdot x = \tilde{I}_{A_n} \cdot x_n$ for each $n \in N$. If $E$ itself is $\sigma$--stable and $G$ is any subset of $E$, the smallest $\sigma$--stable set including $G$, denoted by $H_{cc}(G)$, is called the $\sigma$--stable hull of $G$.
\end{definition}

As pointed out in \cite{Guo10}, when $(E,\mathcal{P})$ is an $RLC$ module, $x$ in Def. \ref{definition2.9} is unique, denoted by $\sum^{\infty}_{n=1}\tilde{I}_{A_n} \cdot x_n$. A slight generalization of Def. \ref{definition2.9} is as follows: $G$ is said to be relatively $\sigma$--stable if there exists $x \in E$ for any countable partition $\{ A_n : n \in N \}$ of $\Omega$ to $\mathcal{F}$ and any countable subset $\{ x_n : n \in N \}$ of $G$ such that $\tilde{I}_{A_n} \cdot x = \tilde{I}_{A_n} \cdot x_n$ for each $n \in N$, then $x \in G$. The notion of relative $\sigma$--stability was independently introduced in \cite{WG15,Zap17}, by which Theorem 2.10 below was independently proved in \cite{WG15,YZG91}
\begin{theorem}\label{theorem2.10}\cite{WG15,YZG91}
For a Hausdorff locally $L^0$--convex module $(E,\mathcal{T})$ over $L^0(\mathcal{F},K)$, the locally $L^0$--convex topology $\mathcal{T}$ can be induced by a family $\mathcal{P}$ of $L^0$--seminorms on $E$ iff $\mathcal{T}$ has a local base consisting of  $L^0$--convex, $L^0$--absorbent and $L^0$--balanced subsets which are relatively $\sigma$--stable.
\end{theorem}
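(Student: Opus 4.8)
The plan is to prove the two implications separately. The forward implication is routine: if $\mathcal{T}$ is induced by a family $\mathcal{P}$ of $L^0$--seminorms, take the canonical local base $\{U(Q,\varepsilon): Q\in\mathcal{P}_f,\ \varepsilon\in L^0_{++}(\mathcal{F})\}$ from Proposition \ref{proposition2.8}. That $U(Q,\varepsilon)=\{x:\|x\|_Q\le\varepsilon\}$ is $L^0$--convex, $L^0$--balanced and $L^0$--absorbent is immediate from $(RNM\mbox{-}1)$--$(RNM\mbox{-}2)$ for $\|\cdot\|_Q$ (for absorbency use $\varepsilon'=\varepsilon/(1+\|x\|_Q)\in L^0_{++}(\mathcal{F})$). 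Relative $\sigma$--stability is the only point using the measure--theoretic structure: given a countable partition $\{A_n\}\subset\mathcal{F}$ of $\Omega$, a sequence $\{x_n\}\subset U(Q,\varepsilon)$ and $x\in E$ with $\tilde I_{A_n}x=\tilde I_{A_n}x_n$ for all $n$, one has $\tilde I_{A_n}\|x\|_Q=\|\tilde I_{A_n}x\|_Q=\|\tilde I_{A_n}x_n\|_Q\le\tilde I_{A_n}\varepsilon$; summing over $n$ gives $\|x\|_Q\le\varepsilon$, i.e. $x\in U(Q,\varepsilon)$.

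The substance is the converse. Suppose $\mathcal{T}$ has a local base $\mathcal{B}$ at $\theta$ consisting of $L^0$--convex, $L^0$--absorbent, $L^0$--balanced, relatively $\sigma$--stable sets. For each $U\in\mathcal{B}$ I would form the $L^0$--valued Minkowski gauge
\[
\mu_U(x)=\bigwedge\{\lambda\in L^0_{++}(\mathcal{F}):x\in\lambda U\},\qquad x\in E.
\]
It is well defined with values in $L^0_+(\mathcal{F})$: $L^0$--absorbency makes the indexing set $S_x:=\{\lambda\in L^0_{++}(\mathcal{F}):x\in\lambda U\}$ nonempty, and Proposition \ref{proposition2.1}(3) provides the infimum. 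The key structural fact, and the first place where relative $\sigma$--stability of $U$ is really used, is that $S_x$ is downward directed and $\sigma$--stable: if $\lambda_1,\lambda_2\in S_x$ and $A=\{\lambda_1\le\lambda_2\}$, then $x/(\lambda_1\wedge\lambda_2)=\tilde I_A(x/\lambda_1)+\tilde I_{A^c}(x/\lambda_2)$ is an element of $E$ (a genuine $L^0$--scalar multiple of $x$) obtained by concatenating members of $U$, hence lies in $U$, so $\lambda_1\wedge\lambda_2\in S_x$; the countable version is analogous. With this, $\mu_U$ is an $L^0$--seminorm: $L^0$--homogeneity $\mu_U(\xi x)=|\xi|\mu_U(x)$ follows by first treating $\xi\in L^0_{++}(\mathcal{F})$ (using $S_{\xi x}=\xi S_x$), then $|\xi|=1$ (using $\eta U=U$ when $|\eta|=1$, from $L^0$--balancedness), and then a general $\xi\in L^0(\mathcal{F},K)$ by patching over $\{\xi=0\}$ and the dyadic level sets of $|\xi|$, again invoking $\sigma$--stability of $U$; subadditivity comes from $L^0$--convexity of $U$ via $x+y=(\lambda+\mu)\big[\tfrac{\lambda}{\lambda+\mu}(x/\lambda)+\tfrac{\mu}{\lambda+\mu}(y/\mu)\big]$ together with the directedness of $S_x,S_y$.

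Next I would establish the sandwich, for $\varepsilon\in L^0_{++}(\mathcal{F})$,
\[
\{x\in E:\mu_U(x)\le\tfrac12\varepsilon\}\subseteq\varepsilon U\subseteq\{x\in E:\mu_U(x)\le\varepsilon\},
\]
the right inclusion being immediate. The left inclusion reduces by $L^0$--homogeneity to: $\mu_U(x)<1$ on $\Omega$ forces $x\in U$. Pick, via Proposition \ref{proposition2.1}(2), a sequence $\lambda_n\downarrow\mu_U(x)$ in $S_x$; then the sets $\{\lambda_n\le 1\}$ increase to a set of full measure, so disjointifying them into a countable partition $\{B_n\}\subset\mathcal{F}$ and setting $\lambda=\sum_n\tilde I_{B_n}\lambda_n$ gives $\lambda\le 1$ and, by $\sigma$--stability of $S_x$, $\lambda\in S_x$; since $L^0$--balancedness gives $\lambda U\subseteq U$, we get $x\in\lambda U\subseteq U$. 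Finally $\mathcal{P}:=\{\mu_U:U\in\mathcal{B}\}$ separates points: if $\bigvee\{\mu_U(x):U\in\mathcal{B}\}=0$ then $\mu_U(x)\le\tfrac12$ for every $U$, so by the sandwich with $\varepsilon=1$ one has $x\in U$ for every $U\in\mathcal{B}$, whence $x\in\bigcap\mathcal{B}=\{\theta\}$ by the Hausdorff property. So $\mathcal{P}$ induces a locally $L^0$--convex topology $\mathcal{T}_c(\mathcal{P})$ as in Proposition \ref{proposition2.8}, and the sandwich yields $U(\{\mu_U\},\tfrac12)\subseteq U$ and $\bigcap_{i}\varepsilon U_i\subseteq U(\{\mu_{U_1},\dots,\mu_{U_n}\},\varepsilon)$; since joint continuity of the module action makes each $\varepsilon U_i$ a $\mathcal{T}$--neighborhood of $\theta$, the two local bases are mutually subordinate at $\theta$, and by translation invariance $\mathcal{T}_c(\mathcal{P})=\mathcal{T}$.

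I expect the delicate points to be: (i) the $L^0$--homogeneity of $\mu_U$ for scalars $\xi$ that vanish on part of $\Omega$ or straddle the value $1$, which is precisely where one localizes with $\tilde I_A$ and re--concatenates; and (ii) the left half of the sandwich, which is the genuine content of the equivalence — without relative $\sigma$--stability one obtains only a weaker, non--localizable inclusion and cannot identify the two topologies. It is worth noting that one never needs $E$ itself to be $\sigma$--stable: every concatenated element appearing above is a fixed $L^0$--scalar multiple or $\tilde I_A$--restriction of a single given $x$, hence automatically belongs to $E$.
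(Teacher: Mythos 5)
Your proof is correct and follows essentially the same route as the sources the paper cites for this theorem (the paper itself gives no proof, only the references \cite{WG15,Zap17}): the easy direction via the sets $U(Q,\varepsilon)$, and the converse via the $L^0$--valued Minkowski gauge $\mu_U$, with relative $\sigma$--stability of the basic neighborhoods used exactly where you use it — to make $S_x$ directed and $\sigma$--stable, to get $L^0$--homogeneity by patching over measurable sets, and to pass from $\mu_U(x)<1$ on $\Omega$ to $x\in U$. Your closing observation that $E$ itself need not be $\sigma$--stable, since every concatenated element is already a scalar multiple or $\tilde I_A$--restriction of a given vector, is also the reason the theorem can be stated for relatively $\sigma$--stable neighborhoods only.
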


\begin{remark}\label{remark2.11}
Theorem 2.10 corrects Theorem 2.4 of \cite{FKV09}.
\end{remark}
\par
Theorem \ref{theorem2.12} below also plays a crucial role in linking $\mathcal{T}_{\varepsilon,\lambda}$ and $\mathcal{T}_c$.

\begin{theorem}\label{theorem2.12}\cite{Guo10}
Let $G$ be a subset of an $RLC$ module $(E,\mathcal{P})$ such that $G$ is $\sigma$--stable, then $\bar{G}_{\varepsilon,\lambda}=\bar{G}_c$, where $\bar{G}_{\varepsilon,\lambda}$ and $\bar{G}_c$ stand for the closures of $G$ under $\mathcal{T}_{\varepsilon,\lambda}$ and $\mathcal{T}_c$, respectively.
\end{theorem}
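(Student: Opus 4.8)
The statement splits into two inclusions, of which only one needs the hypothesis. Since $\mathcal{T}_c$ is finer than $\mathcal{T}_{\varepsilon,\lambda}$ in general (as already emphasised in the text), closures shrink, so $\bar{G}_c\subseteq\bar{G}_{\varepsilon,\lambda}$. To make this precise one compares the local bases of Propositions \ref{proposition2.6} and \ref{proposition2.8}: for a fixed $Q\in\mathcal{P}_f$ and real numbers $\varepsilon>0$, $0<\lambda<1$, taking the \emph{constant} $\varepsilon/2\in L^0_{++}(\mathcal{F})$ one has $U(Q,\varepsilon/2)\subseteq N_\theta(Q,\varepsilon,\lambda)$, because $\|y\|_Q\leq\varepsilon/2$ forces $P\{\|y\|_Q<\varepsilon\}=1>1-\lambda$. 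Hence every $\mathcal{T}_{\varepsilon,\lambda}$--neighbourhood of a point contains a $\mathcal{T}_c$--neighbourhood, so $\mathcal{T}_{\varepsilon,\lambda}\subseteq\mathcal{T}_c$ and the easy inclusion follows; note that $\sigma$--stability of $G$ plays no role here.

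For the nontrivial inclusion $\bar{G}_{\varepsilon,\lambda}\subseteq\bar{G}_c$, I would fix $x\in\bar{G}_{\varepsilon,\lambda}$ together with a basic $\mathcal{T}_c$--neighbourhood $x+U(Q,\varepsilon)$, where $Q\in\mathcal{P}_f$ and $\varepsilon\in L^0_{++}(\mathcal{F})$, and aim to exhibit $g\in G$ with $\|x-g\|_Q\leq\varepsilon$. Since $x$ lies in the $(\varepsilon,\lambda)$--closure of $G$, for each $n\in N$ the set $x+N_\theta(Q,2^{-n},2^{-n})$ meets $G$, so one can pick $g_n\in G$ with $P\{\|x-g_n\|_Q\geq 2^{-n}\}<2^{-n}$; as $\sum_n 2^{-n}<\infty$, the Borel--Cantelli lemma gives $\|x-g_n\|_Q\to 0$ almost surely. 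Now set $A_1=\{\omega:\|x-g_1\|_Q(\omega)\leq\varepsilon(\omega)\}$ and, inductively, $A_n=\{\omega:\|x-g_n\|_Q(\omega)\leq\varepsilon(\omega)\}\setminus\bigcup_{k<n}A_k$ for $n\geq 2$, working with fixed representatives. Because $\varepsilon\in L^0_{++}(\mathcal{F})$ and $\|x-g_n\|_Q\to 0$ a.s., the $A_n$ cover $\Omega$ outside a $P$--null set; absorbing that null set into $A_1$, the family $\{A_n:n\in N\}$ becomes a genuine countable partition of $\Omega$ to $\mathcal{F}$.

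At this point I would invoke the $\sigma$--stability of $G$: there is $g=\sum_{n=1}^{\infty}\tilde{I}_{A_n}g_n\in G$. On each $A_n$ one has $\tilde{I}_{A_n}(x-g)=\tilde{I}_{A_n}x-\tilde{I}_{A_n}g=\tilde{I}_{A_n}x-\tilde{I}_{A_n}g_n=\tilde{I}_{A_n}(x-g_n)$, so applying axiom $(RNM-1)$ to the $L^0$--seminorm $\|\cdot\|_Q$ gives $\tilde{I}_{A_n}\|x-g\|_Q=\|\tilde{I}_{A_n}(x-g)\|_Q=\|\tilde{I}_{A_n}(x-g_n)\|_Q=\tilde{I}_{A_n}\|x-g_n\|_Q\leq\tilde{I}_{A_n}\varepsilon$. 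Summing over $n$ and using that $\{A_n\}$ partitions $\Omega$ yields $\|x-g\|_Q\leq\varepsilon$, i.e.\ $g\in(x+U(Q,\varepsilon))\cap G$. Since $Q$ and $\varepsilon$ were arbitrary, $x\in\bar{G}_c$, which finishes the argument.

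The main obstacle is the patching step: one must be careful that the $A_n$, defined through representatives, are well defined modulo null sets, that their union is all of $\Omega$ once the exceptional null set is absorbed, and that $\sigma$--stability is applied to exactly this partition and this sequence drawn from $G$; once the element $g\in G$ is in hand, the seminorm estimate is immediate from positive homogeneity. A secondary point to keep honest is the almost sure passage from convergence in probability, which is why the geometric weights $2^{-n}$ are chosen so that Borel--Cantelli applies directly rather than merely along a subsequence.
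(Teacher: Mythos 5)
Your proof is correct, and it is essentially the standard argument behind the cited result in \cite{Guo10}: the easy inclusion from $\mathcal{T}_{\varepsilon,\lambda}\subseteq\mathcal{T}_c$, and for the converse an approximating sequence from $G$ converging a.s.\ after Borel--Cantelli, a countable $\mathcal{F}$--measurable partition on which the approximants are within $\varepsilon$, and $\sigma$--stability to concatenate them into a single element of $G$ lying in $x+U(Q,\varepsilon)$. No gaps; the points you flag (well-definedness of the $A_n$ modulo null sets and the use of $(RNM\text{-}1)$ for the patching estimate) are exactly the ones that need care and are handled correctly.
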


\begin{definition}\label{definition2.13}\cite{Guo10}
Let $(E,\mathcal{P})$ be an $RLC$ module over $K$ with base $(\Omega,\mathcal{F},P)$. $E^*_{\varepsilon,\lambda}=\{f:E\rightarrow L^0(\mathcal{F},K)~|~f$ is a continuous module homomorphism from $(E,\mathcal{T}_{\varepsilon,\lambda})$ to $(L^0(\mathcal{F},K), \mathcal{T}_{\varepsilon,\lambda})\}$ is called the random conjugate space of $E$ under the $(\varepsilon,\lambda)$--topology $\mathcal{T}_{\varepsilon,\lambda}$; $E^*_c=\{f:E\rightarrow L^0(\mathcal{F},K)~|~f$ is a continuous module homomorphism from $(E,\mathcal{T}_c)$ to $(L^0(\mathcal{F},K), \mathcal{T}_c)\}$ is called the random conjugate space of $E$ under the  locally $L^0$--convex topology $\mathcal{T}_c$.
\end{definition}

\begin{theorem}\label{theorem2.14}\cite{Guo10,GZZ15a}
Let $(E,\mathcal{P})$ be an $RLC$ module over $K$ with base $(\Omega,\mathcal{F},P)$. Then $E^*_{\varepsilon,\lambda}=E^*_c$ if $\mathcal{P}$ has the countable concatenation property, specially $E^*_{\varepsilon,\lambda}=E^*_c$ for any $RN$ module $(E,\|\cdot\|)$, see \cite{Guo10}. Generally, $E^*_{\varepsilon,\lambda}=H_{cc}(E^*_c)$, see \cite{GZZ15a}.
\end{theorem}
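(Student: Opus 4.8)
The plan is to characterize the members of $E^*_{\varepsilon,\lambda}$ and of $E^*_c$ by explicit $L^0$--domination inequalities and then to derive all three assertions by a concatenation argument. For a module homomorphism $f:E\to L^0(\mathcal{F},K)$ I would first establish the two lemmas:
\begin{enumerate}[(a)]
\item $f\in E^*_c$ iff there exist $Q\in\mathcal{P}_f$ and $\xi\in L^0_+(\mathcal{F})$ with $|f(x)|\leq\xi\|x\|_Q$ for all $x\in E$;
\item $f\in E^*_{\varepsilon,\lambda}$ iff there exist a countable partition $\{A_n:n\in N\}$ of $\Omega$ to $\mathcal{F}$, a sequence $\{Q_n:n\in N\}\subseteq\mathcal{P}_f$ and $\{\xi_n:n\in N\}\subseteq L^0_+(\mathcal{F})$ with $\tilde{I}_{A_n}|f(x)|\leq\xi_n\|x\|_{Q_n}$ for all $x\in E$ and all $n\in N$.
\end{enumerate}
For (a), the ``if'' part holds since, by Proposition \ref{proposition2.8}, $f$ sends $U(Q,\delta/(\xi+1))$ into $\{y\in L^0(\mathcal{F},K):|y|\leq\delta\}$ for every $\delta\in L^0_{++}(\mathcal{F})$; for the ``only if'' part one takes $Q,\varepsilon$ with $f(U(Q,\varepsilon))\subseteq\{y:|y|\leq 1\}$ and uses module homogeneity, applying this inclusion to $\varepsilon x/(\|x\|_Q+1/n)$ and letting $n\to\infty$ to obtain $|f(x)|\leq\varepsilon^{-1}\|x\|_Q$. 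For (b), the ``if'' part is a routine $(\varepsilon,\lambda)$--estimate (on each $A_n$ truncate $\xi_n$ at a high level and split the probability budget between the two events), while the ``only if'' part is the main obstacle, treated at the end.

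Granting (a) and (b), the three statements follow quickly. If $\mathcal{P}$ has the countable concatenation property and $f\in E^*_{\varepsilon,\lambda}$, then with the data $\{A_n\},\{Q_n\},\{\xi_n\}$ of (b) the $L^0$--seminorm $\|\cdot\|^*:=\sum_n\tilde{I}_{A_n}\|\cdot\|_{Q_n}$ lies in $\mathcal{P}$ and $\xi:=\sum_n\tilde{I}_{A_n}\xi_n\in L^0_+(\mathcal{F})$ obeys $|f(x)|\leq\xi\,\|x\|^*$, so $f\in E^*_c$ by (a) with $Q=\{\|\cdot\|^*\}$; conversely $E^*_c\subseteq E^*_{\varepsilon,\lambda}$ for any $\mathcal{P}$, since the inequality in (a) is just the data of (b) for the trivial partition and the ``if'' direction of (b) applies. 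Hence $E^*_{\varepsilon,\lambda}=E^*_c$ under the countable concatenation property, and in particular for every $RN$ module $(E,\|\cdot\|)$, where $\mathcal{P}=\{\|\cdot\|\}$ has that property. For the general identity, one checks from Definition \ref{definition2.9} that $E^*_{\varepsilon,\lambda}$ is $\sigma$--stable (a countable concatenation of module homomorphisms is again a module homomorphism, and the domination of (b) passes to the common refinement of the partitions) and that $E^*_c\subseteq E^*_{\varepsilon,\lambda}$, whence $H_{cc}(E^*_c)\subseteq E^*_{\varepsilon,\lambda}$. For the reverse inclusion, given $f\in E^*_{\varepsilon,\lambda}$ with the data of (b), the homomorphisms $f_n:=\tilde{I}_{A_n}f$ satisfy $|f_n(x)|=\tilde{I}_{A_n}|f(x)|\leq\xi_n\|x\|_{Q_n}$, hence $f_n\in E^*_c$ by (a), and $f=\sum_n\tilde{I}_{A_n}f_n\in H_{cc}(E^*_c)$. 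Therefore $E^*_{\varepsilon,\lambda}=H_{cc}(E^*_c)$.

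The delicate ingredient is the ``only if'' direction of (b): a $\mathcal{T}_{\varepsilon,\lambda}$--continuous $f$ must, after a measurable partition of $\Omega$, be dominated by members of $\mathcal{P}_f$. I would argue by exhaustion. Continuity of $f$ at $\theta$ supplies, for each $m\in N$, some $Q_m\in\mathcal{P}_f$, $\varepsilon_m>0$ and $\lambda_m\in(0,1)$ with $f(N_\theta(Q_m,\varepsilon_m,\lambda_m))\subseteq\{y\in L^0(\mathcal{F},K):P(|y|<1/m)>1-1/m\}$. One then shows, for each $m$, the existence of a largest (up to null sets) $A\in\mathcal{F}$ on which $\tilde{I}_A|f|$ is bounded by a positive--$L^0$ multiple of $\|\cdot\|_{Q_m}$: were there no such bound on any positive--measure subset of some $B\in\mathcal{F}$, then, combining the conditional supremum principle (Proposition \ref{proposition2.1}) with $\sigma$--stability to patch countably many ``bad'' vectors into a single one, we could manufacture vectors $x_k\to\theta$ in $\mathcal{T}_{\varepsilon,\lambda}$ with $|f(x_k)|\geq 1$ on a set of fixed positive probability, contradicting continuity at $\theta$. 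Collecting these maximal sets over $m$ and disjointifying produces the partition $\{A_n\}$ together with its $Q_n$ and bounds $\xi_n$. This localisation--and--patching argument is precisely where the countable concatenation property (equivalently $\sigma$--stability) is indispensable; everything else is bookkeeping with the neighbourhood bases of Propositions \ref{proposition2.6} and \ref{proposition2.8}.
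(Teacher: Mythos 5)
Your overall architecture---characterize $E^*_c$ by a single domination $|f(x)|\leq\xi\|x\|_Q$ and $E^*_{\varepsilon,\lambda}$ by a partitioned domination, then read off all three assertions---is exactly the route of the cited sources \cite{Guo10,GZZ15a}; lemma (a) and your deductions of the three claims from (a) and (b) are sound. The gap sits in the step you yourself single out, the ``only if'' of (b), and it is twofold. First, you invoke $\sigma$--stability of $E$ ``to patch countably many bad vectors into a single one''; but the theorem concerns an arbitrary $RLC$ module---neither $E$ nor $\mathcal{P}$ is assumed $\sigma$--stable in the general assertion $E^*_{\varepsilon,\lambda}=H_{cc}(E^*_c)$---so this tool is not available, and your closing remark that it is ``indispensable'' there would, if true, leave the third assertion unreachable by your argument. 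In fact it is unnecessary: the set $\{|f(x)| : \|x\|_{Q_m}\leq 1\}$ is directed upwards via two--term concatenations $\tilde{I}_Bx_1+\tilde{I}_{B^c}x_2$, which exist in any $L^0(\mathcal{F},K)$--module, so Proposition \ref{proposition2.1}(2) already provides a nondecreasing sequence $|f(x_j)|$ attaining the supremum; no countable patching inside $E$ is needed. Second, the contradiction you aim for does not come off as stated: your bad vectors are controlled only in the single seminorm $\|\cdot\|_{Q_m}$, so after scaling you know $\|x_k\|_{Q_m}\to 0$ in probability but nothing about the other seminorms of the $x_k$, hence you have not shown $x_k\to\theta$ in $\mathcal{T}_{\varepsilon,\lambda}$ and cannot contradict ``continuity at $\theta$'' through such a sequence.

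The repair is to argue against the fixed inclusions you set up at the start rather than against full continuity. Put $g_m:=\bigvee\{|f(x)|:\|x\|_{Q_m}\leq 1\}$. Every $x$ with $\|x\|_{Q_m}\leq 1$ satisfies $(\varepsilon_m/2)x\in N_\theta(Q_m,\varepsilon_m,\lambda_m)$ outright, so $P\{|f(x)|<2/(m\varepsilon_m)\}>1-1/m$; combined with the nondecreasing sequence above this yields $P\{g_m\leq 2/(m\varepsilon_m)\}\geq 1-1/m$, and the homogeneity trick from your proof of (a) (applied to $x/(\|x\|_{Q_m}+1/n)$) turns $D_m:=\{g_m\leq 2/(m\varepsilon_m)\}$ into a set with $\tilde{I}_{D_m}|f(x)|\leq (2/(m\varepsilon_m))\|x\|_{Q_m}$ for all $x\in E$. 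Since $P(D_m)\geq 1-1/m$, the sets $D_m$ exhaust $\Omega$ up to a null set, and disjointifying them gives the partition $\{A_n\}$ together with $Q_n$ and $\xi_n$ directly---no maximal--set or contradiction step, and no $\sigma$--stability of $E$, is required. With (b) established this way, the remainder of your proposal goes through as written.
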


\par
By Theorem\ref{theorem2.14}, $E^*_{\varepsilon,\lambda}=E^*_c$ for any $RN$ module, and thus we can use $E^*$ for $E^*_{\varepsilon,\lambda}$ or $E^*_c$ for an $RN$ module. Also, let $(E,\|\cdot\|)$ be an $RN$ module over $K$ with base $(\Omega,\mathcal{F},P)$, it is earlier proved by Guo in \cite{Guo93} that a linear operator $f: E\rightarrow L^0(\mathcal{F},K)$ is almost surely $(a.s.)$ bounded iff $f\in E^*$, where $f$ is a.s. bounded if there exists $\xi\in L^0_+(\mathcal{F})$ such that $|f(x)|\leq \xi \|x\|$ for any $x\in E$. Further, $E^*$ becomes an $RN$ module over $K$ with base $(\Omega,\mathcal{F},P)$ when $E^*$ is endowed with the $L^0$--norm $\|\cdot\|^*: E^*\rightarrow L^0_+(\mathcal{F})$ defined by $\|f\|^*=\bigwedge\{\xi\in L^0_+(\mathcal{F})~|~ |f(x)|\leq \xi\|x\|$ for any $x\in E\}$ for any $f\in E^*$. In fact, $\|f\|^*=\bigvee\{|f(x)|: x\in E$ and $\|x\|\leq 1\}$ for any $f\in E^*$, it also holds that $(E^*,\|\cdot\|^*)$ is $\mathcal{T}_{\varepsilon,\lambda}$--complete. As usual $(E^*,\|\cdot\|^*)$ is still denoted by $(E^*,\|\cdot\|)$.

\begin{theorem}\label{theorem2.15}\cite{Guo10}
Let $(E,\mathcal{P})$ be an $RLC$ module. Then $E$ is $\mathcal{T}_{\varepsilon,\lambda}$--complete iff both $E$ is $\mathcal{T}_c$--complete and $E$ is $\sigma$--stable.
\end{theorem}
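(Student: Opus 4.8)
The plan is to prove the two implications separately, using throughout that $\mathcal{T}_c$ is finer than $\mathcal{T}_{\varepsilon,\lambda}$ and, for the converse, the closure coincidence for $\sigma$--stable sets (Theorem \ref{theorem2.12}).

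Suppose first that $E$ is $\mathcal{T}_{\varepsilon,\lambda}$--complete. To obtain $\sigma$--stability, fix a countable partition $\{A_n : n \in N\}$ of $\Omega$ to $\mathcal{F}$ and a sequence $\{x_n : n \in N\}$ in $E$ and put $s_k = \sum_{n=1}^{k}\tilde{I}_{A_n}x_n$. From the defining axioms of an $L^0$--seminorm together with the disjointness of the $A_n$, for any $Q \in \mathcal{P}_f$ and $j < k$ one has $\|s_k - s_j\|_Q = \sum_{n=j+1}^{k}\tilde{I}_{A_n}\|x_n\|_Q$, so $\{\omega : \|s_k - s_j\|_Q(\omega) > 0\} \subseteq \bigcup_{n>j}A_n$, whose probability tends to $0$ as $j \to \infty$; hence $\{s_k\}$ is $\mathcal{T}_{\varepsilon,\lambda}$--Cauchy and converges to some $x \in E$. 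Since multiplication by $\tilde{I}_{A_n}$ is $\mathcal{T}_{\varepsilon,\lambda}$--continuous and $\tilde{I}_{A_n}s_k = \tilde{I}_{A_n}x_n$ for $k \geq n$, letting $k \to \infty$ gives $\tilde{I}_{A_n}x = \tilde{I}_{A_n}x_n$ for every $n$, i.e. $E$ is $\sigma$--stable. For $\mathcal{T}_c$--completeness, let $\{x_\alpha\}$ be a $\mathcal{T}_c$--Cauchy net; it is $\mathcal{T}_{\varepsilon,\lambda}$--Cauchy, hence $\mathcal{T}_{\varepsilon,\lambda}$--converges to some $x \in E$. Given a basic $\mathcal{T}_c$--neighborhood $U(Q,\varepsilon)$ of $\theta$, pick $\alpha_0$ with $x_\alpha - x_\beta \in U(Q,\varepsilon)$ for all $\alpha,\beta \geq \alpha_0$; fixing $\alpha \geq \alpha_0$, the net $(x_\alpha - x_\beta)_\beta$ lies eventually in $U(Q,\varepsilon)$ and $\mathcal{T}_{\varepsilon,\lambda}$--converges to $x_\alpha - x$. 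As $\|\cdot\|_Q$ is $\mathcal{T}_{\varepsilon,\lambda}$--continuous, $U(Q,\varepsilon)$ is $\mathcal{T}_{\varepsilon,\lambda}$--closed, so $x_\alpha - x \in U(Q,\varepsilon)$ for every $\alpha \geq \alpha_0$; thus $x_\alpha \to x$ in $\mathcal{T}_c$, and $E$ is $\mathcal{T}_c$--complete.

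Conversely, suppose $E$ is $\mathcal{T}_c$--complete and $\sigma$--stable. The plan is to pass to the $\mathcal{T}_{\varepsilon,\lambda}$--completion $(\hat{E},\hat{\mathcal{P}})$ of $(E,\mathcal{P})$: the $L^0(\mathcal{F},K)$--module action extends by joint continuity (recall $L^0(\mathcal{F},K)$ is already $\mathcal{T}_{\varepsilon,\lambda}$--complete) and each $L^0$--seminorm, being $1$--Lipschitz, extends by uniform continuity to an $L^0_+(\mathcal{F})$--valued seminorm on $\hat{E}$, so $\hat{E}$ is again an $RLC$ module whose $(\varepsilon,\lambda)$-- and locally $L^0$--convex topologies restrict on $E$ to $\mathcal{T}_{\varepsilon,\lambda}$ and $\mathcal{T}_c$. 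Then $(E,\mathcal{T}_c)$ being complete, $E$ is $\mathcal{T}_c$--closed in $\hat{E}$, i.e. $\bar{E}_c = E$; and since $E$ is $\sigma$--stable, Theorem \ref{theorem2.12} applied inside $\hat{E}$ gives $\bar{E}_{\varepsilon,\lambda} = \bar{E}_c = E$, so $E$ is $\mathcal{T}_{\varepsilon,\lambda}$--closed in the $\mathcal{T}_{\varepsilon,\lambda}$--complete module $\hat{E}$, hence $\mathcal{T}_{\varepsilon,\lambda}$--complete. As a more hands--on alternative, one may instead take a $\mathcal{T}_{\varepsilon,\lambda}$--Cauchy sequence, pass to a subsequence $\{y_m\}$ with $\|y_m - y_{m+1}\| \leq \xi_m$ and $\sum_m \xi_m \in L^0_+(\mathcal{F})$, and observe that for every $\mathcal{F}$--measurable $\mu : \Omega \to N$ the concatenation $y_\mu := \sum_m \tilde{I}_{\{\mu = m\}}y_m$ lies in $E$ by $\sigma$--stability, that the net $\{y_\mu\}$ (directed by the pointwise order on such $\mu$) is $\mathcal{T}_c$--Cauchy because $\|y_\mu - y_{\mu'}\| \leq \eta_{\mu\wedge\mu'}$ with $\eta_n := \sum_{k\geq n}\xi_k \downarrow 0$ a.s. while every $\varepsilon \in L^0_{++}(\mathcal{F})$ dominates $\eta_{\mu_0}$ for a suitable measurable $\mu_0$, and finally that the $\mathcal{T}_c$--limit of $\{y_\mu\}$ is the $\mathcal{T}_{\varepsilon,\lambda}$--limit of $\{y_m\}$, hence of the original sequence; the general $RLC$ case then follows with Cauchy filters in place of sequences.

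The crux is the converse implication, and the difficulty there is that $\mathcal{T}_c$ is so much stronger than $\mathcal{T}_{\varepsilon,\lambda}$ that a $\mathcal{T}_{\varepsilon,\lambda}$--Cauchy net is typically nowhere near $\mathcal{T}_c$--Cauchy, so $\mathcal{T}_c$--completeness cannot be invoked on it directly; the countable concatenation property is exactly what repairs this, either abstractly through the closure coincidence of Theorem \ref{theorem2.12} or concretely by splicing a $\mathcal{T}_{\varepsilon,\lambda}$--Cauchy sequence into a $\mathcal{T}_c$--Cauchy net along random indices. Two routine points will each need a line of care: that the sublevel sets $U(Q,\varepsilon)$ are $\mathcal{T}_{\varepsilon,\lambda}$--closed, and that the $(\varepsilon,\lambda)$--completion of an $RLC$ module is again an $RLC$ module inducing the expected topologies on the original module.
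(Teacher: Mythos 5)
The paper does not actually prove Theorem \ref{theorem2.15}; it is quoted from [Guo10] without proof, so I can only assess your argument on its own terms, and on those terms it is correct. The forward direction is done properly: the partial-sum trick with disjoint supports gives a $\mathcal{T}_{\varepsilon,\lambda}$--Cauchy sequence whose limit is the required concatenation (continuity of multiplication by $\tilde{I}_{A_n}$ comes from Proposition \ref{proposition2.6}), and the upgrade from $\mathcal{T}_{\varepsilon,\lambda}$-- to $\mathcal{T}_c$--completeness via the $\mathcal{T}_{\varepsilon,\lambda}$--closedness of the sets $U(Q,\varepsilon)$ is the standard ``finer topology with closed basic neighborhoods'' argument and is sound. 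For the converse, your completion route is a clean reduction to Theorem \ref{theorem2.12}: once $(\hat{E},\hat{\mathcal{P}})$ is known to be an $RLC$ module whose two topologies restrict to $\mathcal{T}_{\varepsilon,\lambda}$ and $\mathcal{T}_c$ on $E$, then $\mathcal{T}_c$--completeness gives $\bar{E}_c=E$, $\sigma$--stability of $E$ as a subset of $\hat{E}$ gives $\bar{E}_{\varepsilon,\lambda}=\bar{E}_c=E$, and a $\mathcal{T}_{\varepsilon,\lambda}$--closed subset of the complete module $\hat{E}$ is complete. The one load-bearing ingredient you do not verify is precisely the flagged completion lemma (extension of the module action via uniform continuity of $x\mapsto\xi x$, extension of each $L^0$--seminorm, separation of points from Hausdorffness of $\hat{E}$, and the identification of the completed topology with the one induced by the extended seminorms); all of this is true and routine, but it should be written out if this were a full proof. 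Your ``hands-on'' alternative --- splicing a $\mathcal{T}_{\varepsilon,\lambda}$--Cauchy sequence along random indices $\mu$ into a $\mathcal{T}_c$--Cauchy net dominated by the tails $\eta_{\mu\wedge\mu'}$ --- is convincing as written only for $RN$ modules, where $\mathcal{T}_{\varepsilon,\lambda}$ is metrizable and sequences suffice; the closing remark that the general $RLC$ case ``follows with Cauchy filters'' is a nontrivial wave, and this second construction is in fact much closer in spirit to Guo's original direct argument than your completion route, which buys generality at the price of the auxiliary completion lemma. Since the completion route already covers the general case, I see no genuine gap.
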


\par
To introduce the following separation theorem--Theorem\ref{theorem2.16}, which is a refined formulation of Theorem3.1 of \cite{GXC09}, let $(E,\mathcal{P})$ be an $RLC$ module over $K$ with base $(\Omega, \mathcal{F},P)$, $x\in E$ and $G$ a nonempty subset of $E$, for each finite subfamily $Q$ of $\mathcal{P}($ namely $Q\in \mathcal{P}_f)$, define $d_Q(x,G)=\bigwedge\{\|x-y\|_Q: y\in G\}$, then $d(x,G):=\bigvee\{d_Q(x,G): Q\in \mathcal{P}_f\}$ is called the random distance from $x$ to $G$. It is easy to prove that $x\in \bar{G}_{\varepsilon,\lambda}$ iff $d(x,G)=0$.

\begin{theorem}\label{theorem2.16}\cite[Proposition4.1]{GZZ15a}
Let $(E,\mathcal{P})$ be an $RLC$ module over $K$ with base $(\Omega,\mathcal{F},P)$, $x\in E$ and $G$ a nonempty $\mathcal{T}_{\varepsilon,\lambda}$--closed $L^0$--convex subset such that $x\notin G$. Then there exists $f\in E^*_{\varepsilon,\lambda}$ satisfying the following two items:
\begin{enumerate}[(1)]
\item $Re(f(x))>\bigvee \{Re(f(y)): y\in G\}$ on $(d(x,G)>0)$;
\item $Re(f(x))=\bigvee \{Re(f(y)): y\in G\}$ on $(d(x,G)>0)^c$.
\end{enumerate}
Here $Re(f(x))$ stands for the real part of $f(x)$.
\end{theorem}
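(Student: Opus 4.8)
The plan is to reduce the statement to the separation theorem between a point and a $\mathcal{T}_{\varepsilon,\lambda}$--closed $L^0$--convex set already available in \cite{GXC09} (whose ultimate foundation is the Hahn--Banach theorem for a.s.\ bounded random linear functionals of \cite{Guo89,Guo93}), by localizing the problem to the event $A:=(d(x,G)>0)\in\mathcal{F}$. Two preliminary reductions: since $G$ is $\mathcal{T}_{\varepsilon,\lambda}$--closed we have $G=\bar{G}_{\varepsilon,\lambda}$, so $x\notin G$ forces $d(x,G)\neq 0$, i.e.\ $P(A)>0$; and if $K=C$ we regard $E$ as an $L^0(\mathcal{F},R)$--module, noting that $\mathcal{P}$, $\mathcal{T}_{\varepsilon,\lambda}$, the $L^0$--convexity and $\mathcal{T}_{\varepsilon,\lambda}$--closedness of $G$, and $d(x,G)$ are all unchanged, so it is enough to produce a real--linear $f_0\in E^*_{\varepsilon,\lambda}$ satisfying (1) and (2) with $Re(f)$ read as $f_0$, after which $f(y):=f_0(y)- i\,f_0(iy)$ is the desired $L^0(\mathcal{F},C)$--linear functional. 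So assume $K=R$ from now on.

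Now localize. Put $\mathcal{F}_A:=\{B\cap A:B\in\mathcal{F}\}$ and $P_A:=P(\cdot)/P(A)$; then $\tilde{I}_AE=\{\tilde{I}_Aw:w\in E\}$, equipped with the $L^0$--seminorms $\{\tilde{I}_A\|\cdot\|_Q:Q\in\mathcal{P}_f\}$, is an $RLC$ module over $L^0(\mathcal{F}_A)$ with base $(A,\mathcal{F}_A,P_A)$, and its $(\varepsilon,\lambda)$--topology is the one inherited from $\mathcal{T}_{\varepsilon,\lambda}$. The set $\tilde{I}_AG:=\{\tilde{I}_Ay:y\in G\}$ is $L^0$--convex and $\mathcal{T}_{\varepsilon,\lambda}$--closed in $\tilde{I}_AE$: if $\tilde{I}_Ay_n\to z=\tilde{I}_Aw$ in $\tilde{I}_AE$, then fixing any $y_0\in G$ the elements $\tilde{I}_Ay_n+\tilde{I}_{A^c}y_0$ belong to $G$ by $L^0$--convexity and converge in $\mathcal{T}_{\varepsilon,\lambda}$ to $\tilde{I}_Aw+\tilde{I}_{A^c}y_0\in\bar{G}_{\varepsilon,\lambda}=G$, whence $z\in\tilde{I}_AG$. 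Moreover $\tilde{I}_Ax\notin\tilde{I}_AG$: since $\tilde{I}_A\|x-y\|_Q\geq\tilde{I}_Ad_Q(x,G)$ for every $y\in G$ and $Q\in\mathcal{P}_f$, one gets $d(\tilde{I}_Ax,\tilde{I}_AG)\geq\tilde{I}_Ad(x,G)$, which is strictly positive everywhere on $A$, i.e.\ $d(\tilde{I}_Ax,\tilde{I}_AG)\in L^0_{++}(\mathcal{F}_A)$.

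Next, separate and lift back. By the separation theorem of \cite{GXC09}, applied in $\tilde{I}_AE$ to the point $\tilde{I}_Ax$ and the $\mathcal{T}_{\varepsilon,\lambda}$--closed $L^0$--convex set $\tilde{I}_AG$ --- in the (here guaranteed) case that the random distance lies in $L^0_{++}$, so that strict separation holds on the whole trace space --- there is $g\in(\tilde{I}_AE)^*_{\varepsilon,\lambda}$ with $g(\tilde{I}_Ax)>\bigvee\{g(z):z\in\tilde{I}_AG\}$ on $A$, the supremum on the right being an element of $L^0(\mathcal{F}_A)$. Define $f:E\to L^0(\mathcal{F},R)$ by $f(y)=g(\tilde{I}_Ay)$ on $A$ and $f(y)=0$ on $A^c$. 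Then $f$ is an $L^0(\mathcal{F},R)$--module homomorphism, and it is $\mathcal{T}_{\varepsilon,\lambda}$--continuous: if $|g(z)|\leq\eta\|z\|_Q$ on $A$ for some $Q\in\mathcal{P}_f$ and $\eta\in L^0_+(\mathcal{F}_A)$, then $|f(y)|\leq(\tilde{I}_A\eta)\|y\|_Q$ on $\Omega$; hence $f\in E^*_{\varepsilon,\lambda}$ and $\tilde{I}_{A^c}f=0$. On $A$ we then have $\bigvee\{f(y):y\in G\}=\bigvee\{g(z):z\in\tilde{I}_AG\}<g(\tilde{I}_Ax)=f(x)$, which is item (1); on $A^c$, $f\equiv 0$ there and $G\neq\emptyset$ give $\bigvee\{f(y):y\in G\}=0=f(x)$, which is item (2). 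Undoing the reduction to $K=R$ completes the proof.

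The substantive step is the localization of the second paragraph: one must check that $\tilde{I}_AE$ is again an $RLC$ module carrying the inherited $(\varepsilon,\lambda)$--topology, that $\tilde{I}_AG$ remains $\mathcal{T}_{\varepsilon,\lambda}$--closed (which is where the $L^0$--convexity of $G$ does real work), and the distance estimate $d(\tilde{I}_Ax,\tilde{I}_AG)\geq\tilde{I}_Ad(x,G)$; after that the proof is merely quoting \cite{GXC09} and routine bookkeeping (the passage $K=R\rightarrow K=C$ and the verification that $f$ is a well--defined continuous module homomorphism). A variant that never leaves $E$ would replace the localization by an exhaustion argument: separate on sub--events of $A$ by \cite{GXC09}, glue the resulting functionals across countable partitions (legitimate since $E^*_{\varepsilon,\lambda}$ is $\sigma$--stable), and apply Proposition \ref{proposition2.1} to extract a largest sub--event of $A$ on which separation holds, which must be $A$ itself; I expect the localization route to be the shorter one.
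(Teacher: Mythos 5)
The paper does not actually prove Theorem \ref{theorem2.16}: it is quoted from \cite[Proposition 4.1]{GZZ15a} and described as a refined formulation of Theorem 3.1 of \cite{GXC09}, so there is no in--paper argument to match yours against; I assess your proposal on its own terms.

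Your route (localize to $A=(d(x,G)>0)$, verify that $\tilde{I}_AE$ with the seminorms $\tilde{I}_A\|\cdot\|_Q$ is an $RLC$ module with base $(A,\mathcal{F}_A,P_A)$ carrying the inherited $(\varepsilon,\lambda)$--topology, that $\tilde{I}_AG$ is $\mathcal{T}_{\varepsilon,\lambda}$--closed and $L^0$--convex, that $d(\tilde{I}_Ax,\tilde{I}_AG)\geq\tilde{I}_Ad(x,G)\in L^0_{++}(\mathcal{F}_A)$, separate there by \cite{GXC09}, and lift back by extension by zero) is sound, and the bookkeeping you indicate (closedness of $\tilde{I}_AG$ via gluing with a fixed $y_0\in G$ using $L^0$--convexity, the transfer of a.s.\ boundedness, the real--to--complex step $f=f_0-if_0(i\,\cdot)$, and the localization of suprema giving items (1) and (2)) is the right bookkeeping. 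Two remarks. First, the entire weight of the argument rests on the precise strength of what you import: you need the cited separation theorem to give strict separation on the \emph{whole} event where the random distance is positive, with the distance built from finite subfamilies $Q\in\mathcal{P}_f$ exactly as in the present paper; since the content of the ``refinement'' recorded in Theorem \ref{theorem2.16} is precisely the identification of that event together with item (2), you should state explicitly which form of \cite[Theorem 3.1]{GXC09} you invoke. If only the weaker conclusion ``strict separation on some subevent of positive probability'' were available, the localization alone would not close the argument, and the exhaustion--and--gluing variant you sketch at the end (using the $\sigma$--stability of $E^*_{\varepsilon,\lambda}$ and Proposition \ref{proposition2.1}) would become the actual proof rather than an optional alternative. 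Second, granted the stronger form of \cite{GXC09}, the localization machinery is unnecessary: take $f$ satisfying item (1) on $D:=(d(x,G)>0)$ and replace it by $\tilde{I}_Df$, which again lies in $E^*_{\varepsilon,\lambda}$ because $E^*_{\varepsilon,\lambda}$ is an $L^0(\mathcal{F},K)$--module; since suprema over $G$ localize to events, item (1) is unchanged on $D$, and item (2) holds on $D^c$ because both sides vanish there ($G\neq\emptyset$). This one--line modification is presumably how \cite{GZZ15a} passes from \cite{GXC09} to the refined statement, and it spares the verification that the trace module inherits the $RLC$ structure and topology.
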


\begin{corollary}\label{corollary2.17}
Let $(E,\mathcal{P})$ be an $RLC$ module over $K$ with base $(\Omega,\mathcal{F},P)$ such that $\mathcal{P}$ has the countable concatenation property, $x\in E$ and $G$ a nonempty $\mathcal{T}_c$--closed $L^0$--convex and $\sigma$--stable subset of $E$ such that $x\notin G$. Then there exists $f\in E^*_c$ satisfying $(1)$ and $(2)$ of Theorem\ref{theorem2.16}.
\end{corollary}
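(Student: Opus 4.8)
The plan is to reduce Corollary \ref{corollary2.17} to the already-established separation theorem under the $(\varepsilon,\lambda)$--topology, namely Theorem \ref{theorem2.16}, by first converting the $\mathcal{T}_c$--closedness hypothesis on $G$ into $\mathcal{T}_{\varepsilon,\lambda}$--closedness, and then transporting the separating functional back from $E^*_{\varepsilon,\lambda}$ to $E^*_c$.

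First I would observe that, since $G$ is $\sigma$--stable, Theorem \ref{theorem2.12} applies and yields $\bar{G}_{\varepsilon,\lambda} = \bar{G}_c$. But $G$ is assumed $\mathcal{T}_c$--closed, so $\bar{G}_c = G$, whence $\bar{G}_{\varepsilon,\lambda} = G$; that is, $G$ is also $\mathcal{T}_{\varepsilon,\lambda}$--closed. Consequently $G$ is a nonempty $\mathcal{T}_{\varepsilon,\lambda}$--closed $L^0$--convex subset of $E$ with $x \notin G$, and Theorem \ref{theorem2.16} furnishes some $f \in E^*_{\varepsilon,\lambda}$ satisfying items $(1)$ and $(2)$ of that theorem with respect to $x$ and $G$, where the random distance $d(x,G)$ is computed exactly as in the statement of Theorem \ref{theorem2.16}.

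It then remains only to upgrade the membership $f \in E^*_{\varepsilon,\lambda}$ to $f \in E^*_c$. Since $\mathcal{P}$ has the countable concatenation property, Theorem \ref{theorem2.14} gives $E^*_{\varepsilon,\lambda} = E^*_c$, so $f \in E^*_c$ and, as the conditions $(1)$ and $(2)$ involve only the values of $f$, they are inherited verbatim; this completes the argument.

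I do not expect a genuine obstacle here: the whole proof is a bookkeeping combination of Theorems \ref{theorem2.12}, \ref{theorem2.16} and \ref{theorem2.14}. The single point that deserves care is the invocation of the $\sigma$--stability of $G$, which is exactly what legitimizes the identification $\bar{G}_c = \bar{G}_{\varepsilon,\lambda}$; without this hypothesis the two closures may differ and the reduction to Theorem \ref{theorem2.16} would fail, so an alternative route redoing the separation directly under $\mathcal{T}_c$ (via the relatively $\sigma$--stable local base of Theorem \ref{theorem2.10}) would be needed instead.
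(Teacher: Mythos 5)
Your proof is correct and is exactly the intended derivation: $\sigma$--stability of $G$ plus Theorem \ref{theorem2.12} turns $\mathcal{T}_c$--closedness into $\mathcal{T}_{\varepsilon,\lambda}$--closedness, Theorem \ref{theorem2.16} supplies the separating functional, and Theorem \ref{theorem2.14} (via the countable concatenation property of $\mathcal{P}$) identifies $E^*_{\varepsilon,\lambda}$ with $E^*_c$. The paper leaves this corollary without a written proof precisely because it follows by this bookkeeping combination, so there is nothing to add.
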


\begin{definition}\label{definition2.18}
Let $(E,\mathcal{P})$ be an $RLC$ module over $R$ with base $(\Omega,\mathcal{F},P)$. A function $f: E\rightarrow \bar{L}^0(\mathcal{F})$ is $L^0$--convex if $f(\xi x+ (1-\xi)y)\leq \xi f(x)+ (1-\xi)f(y)$ for all $x$ and $y\in E$ and $\xi\in L^0_+(\mathcal{F})$ with $0\leq \xi \leq 1$, where we make the following convention: $0\cdot(\pm\infty)=0$ and $(-\infty)+(+\infty)=+\infty$; $f$ is proper if $f(x)>-\infty$ on $\Omega$ for any $x\in E$ and $Dom(f):=\{x\in E~|~f(x)<+\infty$ on $\Omega\}\neq\emptyset$; $f$ is local if $\tilde{I}_Af(x)=\tilde{I}_Af(\tilde{I}_Ax)$ for any $x\in E$ and $A\in \mathcal{F}$; a proper $f$ is $\mathcal{T}_{\varepsilon,\lambda}$--lower semicontinuous if $epi(f):=\{(x,r)\in E\times L^0(\mathcal{F})~|~ f(x)\leq r\}$ is closed in $(E,\mathcal{T}_{\varepsilon,\lambda})\times (L^0(\mathcal{F}), \mathcal{T}_{\varepsilon,\lambda})$; a proper $f$ is $\mathcal{T}_c$--lower semicontinuous if $\{x\in E~|~f(x)\leq r\}$ is $\mathcal{T}_c$--closed for any $r\in L^0(\mathcal{F})$.
\end{definition}

\begin{remark}\label{remark2.19}
Def.\ref{definition2.18} is enough for this paper, see \cite[Def.2.22]{GZWYYZ17} for the notion of a closed function, which is more general than that of a proper lower semicontinuous $L^0$--convex function, see \cite{GZWW20} for an $L^0$--convex function only defined on an $L^0$--convex subset.
\end{remark}

\begin{theorem}\label{theorem2.20}\cite[Theorem2.13]{GZWYYZ17}
Let $(E,\mathcal{P})$ be an $RLC$ module over $R$ with base $(\Omega,\mathcal{F},P)$ such that both $E$ is $\sigma$--stable and $\mathcal{P}$ has the countable concatenation property, $f:E \rightarrow \bar{L}^0(\mathcal{F})$ a proper and local function, then the following are equivalent:
\begin{enumerate}[(1)]
\item $\{x\in E~|~ f(x)\leq r\}$ is $\mathcal{T}_{\varepsilon,\lambda}$--closed for any $r\in L^0(\mathcal{F})$;
\item $f$ is $\mathcal{T}_c$--lower semicontinuous;
\item $f$ is $\mathcal{T}_{\varepsilon,\lambda}$--lower semicontinuous;
\item $epi(f)$ is closed in $(E,\mathcal{T}_c)\times (L^0(\mathcal{F}), \mathcal{T}_c)$;
\item $\underline{\lim}_{\alpha}f(x_{\alpha})\geq f(x)$ for any $x\in E$ and any net $\{x_{\alpha}, \alpha\in \Gamma\}$ in $E$ convergent to $x$ with respect to $\mathcal{T}_c,$ where $\underline{\lim}_{\alpha}f(x_{\alpha})= \bigvee_{\beta\in \Gamma}(\bigwedge_{\alpha\geq \beta} f(x_{\alpha}))$.
\end{enumerate}
\end{theorem}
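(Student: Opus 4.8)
The plan is to prove the five conditions equivalent through the cycle $(1)\Rightarrow(2)\Rightarrow(5)\Rightarrow(4)\Rightarrow(3)\Rightarrow(1)$. The three ``easy'' arrows come straight from the fact that $\mathcal{T}_c$ is finer than $\mathcal{T}_{\varepsilon,\lambda}$, both on $E$ and on $L^0(\mathcal{F})$ (hence on the product $RLC$ module $E\times L^0(\mathcal{F})$), so every $\mathcal{T}_{\varepsilon,\lambda}$--closed set is $\mathcal{T}_c$--closed: this yields $(1)\Rightarrow(2)$ and $(3)\Rightarrow(4)$ at once, and $(3)\Rightarrow(1)$ since $\{x\in E:f(x)\leq r\}$ is the preimage of $epi(f)$ under the $\mathcal{T}_{\varepsilon,\lambda}$--continuous map $x\mapsto(x,r)$ of $E$ into $E\times L^0(\mathcal{F})$.

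To run the cycle I still need $(4)\Rightarrow(3)$, and I would also record the companion fact $(2)\Rightarrow(1)$ so that $(1)\Leftrightarrow(2)$ and $(3)\Leftrightarrow(4)$ are in hand; both rest on the same device. First I would check that, for fixed $r\in L^0(\mathcal{F})$, the sublevel set $\{x:f(x)\leq r\}$ and the epigraph $epi(f)$ are $\sigma$--stable: given a countable partition $\{A_n:n\in N\}$ of $\Omega$ and points $x_n$ of the sublevel set (resp.\ pairs $(x_n,r_n)$ of $epi(f)$), locality of $f$ gives $\tilde{I}_{A_n}f\bigl(\sum_k\tilde{I}_{A_k}x_k\bigr)=\tilde{I}_{A_n}f(x_n)\leq\tilde{I}_{A_n}r$ for each $n$ (resp.\ $\leq\tilde{I}_{A_n}r_n$), and since the $A_n$ partition $\Omega$, summation over $n$ shows the concatenation again lies in the set; that $E$, hence $E\times L^0(\mathcal{F})$, is $\sigma$--stable is exactly what lets the concatenation $\sum_k\tilde{I}_{A_k}x_k$ be formed. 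Then Theorem \ref{theorem2.12}, applied in $E$ and in $E\times L^0(\mathcal{F})$, says that a $\sigma$--stable set has coinciding $\mathcal{T}_{\varepsilon,\lambda}$-- and $\mathcal{T}_c$--closures, so a $\mathcal{T}_c$--closed $\sigma$--stable set is already $\mathcal{T}_{\varepsilon,\lambda}$--closed; this is $(2)\Rightarrow(1)$ and $(4)\Rightarrow(3)$.

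It remains to bridge the two pairs, which I propose to do as $(2)\Rightarrow(5)\Rightarrow(4)$. The step $(5)\Rightarrow(4)$ is a short lattice computation: if $(x_\alpha,s_\alpha)\to(x,s)$ in $\mathcal{T}_c\times\mathcal{T}_c$ with $f(x_\alpha)\leq s_\alpha$, then $\mathcal{T}_c$--convergence $s_\alpha\to s$ in $L^0(\mathcal{F})$ forces $\bigvee_\beta\bigwedge_{\alpha\geq\beta}s_\alpha=s$, while $\bigwedge_{\alpha\geq\beta}f(x_\alpha)\leq\bigwedge_{\alpha\geq\beta}s_\alpha$ for all $\beta$ gives $\underline{\lim}_\alpha f(x_\alpha)\leq s$, whence $f(x)\leq s$ by $(5)$, i.e.\ $(x,s)\in epi(f)$. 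The core is $(2)\Rightarrow(5)$, which I would prove by contradiction. Assume every scalar sublevel set is $\mathcal{T}_c$--closed, $x_\alpha\to x$ in $\mathcal{T}_c$, and $P\bigl(f(x)>\underline{\lim}_\alpha f(x_\alpha)\bigr)>0$; pick $A\in\mathcal{F}$ with $P(A)>0$ and, using that $f$ is proper (so $f(x)>-\infty$), choose $r\in L^0(\mathcal{F})$ with $\underline{\lim}_\alpha f(x_\alpha)\leq r<f(x)$ on $A$. For each index $\beta$, Proposition \ref{proposition2.1} lets me write $\bigwedge_{\alpha\geq\beta}f(x_\alpha)=\bigwedge_n f(x_{\alpha_n})$ with $\alpha_n\geq\beta$, so for every $m\in N$ the sets $A\cap\{f(x_{\alpha_n})<r+1/m\}$ cover $A$ up to a null set; disjointifying them, filling $A^c$ with a fixed $x_0\in Dom(f)$, and invoking $\sigma$--stability of $E$ produces $y_{\beta,m}\in E$ with, by locality, $f(y_{\beta,m})\leq(r+1/m)\tilde{I}_A+f(x_0)\tilde{I}_{A^c}$, that is, $y_{\beta,m}$ lies in a fixed scalar sublevel set. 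Letting $\beta$ run cofinally past an index beyond which $\|x_\alpha-x\|_Q\leq\varepsilon$ and using that $L^0$--seminorms are local, one gets $\tilde{I}_A(y_{\beta,m}-x)\to0$ while $\tilde{I}_{A^c}y_{\beta,m}\equiv\tilde{I}_{A^c}x_0$, so $y_{\beta,m}\to z:=\tilde{I}_Ax+\tilde{I}_{A^c}x_0$ in $\mathcal{T}_c$; closedness of the sublevel set and then $m\to\infty$ give $\tilde{I}_Af(x)=\tilde{I}_Af(z)\leq\tilde{I}_Ar$, contradicting $r<f(x)$ on $A$. With $(4)\Rightarrow(3)$ already available the cycle closes, and $(4)\Rightarrow(5)$ comes along as well.

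I expect the step $(2)\Rightarrow(5)$ to be the real obstacle. Three points need care: (i) the extended--real lattice $\bar{L}^0(\mathcal{F})$, since $\underline{\lim}_\alpha f(x_\alpha)$ may be $-\infty$ on part of $A$, so the separating level $r$ has to be produced by a genuine (but routine) measurable splitting of $A$; (ii) converting the merely pointwise estimate $f(x_{\alpha_n})<r+1/m$ into true membership in one sublevel set, which is precisely where locality of $f$ combined with $\sigma$--stability of $E$ serves as a measurable--gluing device; and (iii) keeping the glued net $\{y_{\beta,m}\}_\beta$ $\mathcal{T}_c$--convergent, which rests on locality of the defining $L^0$--seminorms together with Proposition \ref{proposition2.1} supplying the countable family realizing each infimum. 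Everything else is either immediate from $\mathcal{T}_c\supseteq\mathcal{T}_{\varepsilon,\lambda}$ or a direct application of Theorem \ref{theorem2.12}.
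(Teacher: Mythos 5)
Your proposal is correct, but note that the paper itself gives no proof of this statement: Theorem 2.20 is quoted from [GZWYYZ17, Theorem 2.13] (cf. Remark 2.21), so there is no in-paper argument to compare against line by line. Judged on its own, your cycle $(1)\Rightarrow(2)\Rightarrow(5)\Rightarrow(4)\Rightarrow(3)\Rightarrow(1)$ is sound and uses exactly the tools the paper supplies: the inclusion $\mathcal{T}_{\varepsilon,\lambda}\subset\mathcal{T}_c$ for the easy arrows, locality of $f$ plus $\sigma$--stability of $E$ to show that sublevel sets and $epi(f)$ are $\sigma$--stable, Theorem 2.12 to convert $\mathcal{T}_c$--closedness of these $\sigma$--stable sets into $\mathcal{T}_{\varepsilon,\lambda}$--closedness, and Proposition 2.1 together with the countable--gluing construction $y_{\beta,m}=\sum_n\tilde{I}_{B_n}x_{\alpha_n}+\tilde{I}_{A^c}x_0$ for the pivotal implication $(2)\Rightarrow(5)$; I checked that the $\mathcal{T}_c$--convergence $y_{\beta,m}\to\tilde{I}_Ax+\tilde{I}_{A^c}x_0$ and the membership $f(y_{\beta,m})\leq(r+1/m)\tilde{I}_A+f(x_0)\tilde{I}_{A^c}$ both go through via locality of $f$ and of the $L^0$--seminorms, so the contradiction $\tilde{I}_Af(x)\leq\tilde{I}_Ar$ is obtained correctly. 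Two points are left implicit and should be written out, though both are routine and you flag them: first, that $E\times L^0(\mathcal{F})$ with the seminorms $\{\|\cdot\|\circ pr_1\}\cup\{|\cdot|\circ pr_2\}$ is an $RLC$ module whose $(\varepsilon,\lambda)$-- and locally $L^0$--convex topologies coincide with the respective product topologies (needed to invoke Theorem 2.12 for $epi(f)$ in $(4)\Rightarrow(3)$); second, the measurable construction of a finite level $r$ with $\underline{\lim}_\alpha f(x_\alpha)\leq r<f(x)$ on a set of positive measure, which needs a piecewise definition on the sets where $\underline{\lim}_\alpha f(x_\alpha)=-\infty$ and/or $f(x)=+\infty$. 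Incidentally, your argument never uses the hypothesis that $\mathcal{P}$ has the countable concatenation property; that is not a defect, but it is worth remarking that your route gets by with $\sigma$--stability of $E$ and locality alone.
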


\begin{remark}\label{remark2.21}
The equivalence among $(2),(4)$ and $(5)$ in Theorem\ref{theorem2.20} was first studied in \cite{FKV09}, but the proof given in \cite{FKV09} is not only not strict but also depends on a wrong result--Lemma2.28 of \cite{FKV09}. A strict and complete proof of Theorem\ref{theorem2.20} was first given in \cite{GZWYYZ17}, and hence the strict and complete proof of Corollary\ref{corollary2.23} below was first given in \cite{GZWYYZ17}.
\end{remark}

\par
Let $(E,\mathcal{P})$ be an $RLC$ module over $R$ with base $(\Omega,\mathcal{F},P)$ and $f:E \rightarrow \bar{L}^0(\mathcal{F})$ a proper $L^0$--convex function, random conjugate and double random conjugate functions under $\mathcal{T}_{\varepsilon,\lambda}$ and $\mathcal{T}_c$ can be defined as follows:\\
$f^*_{\varepsilon,\lambda}: E^*_{\varepsilon,\lambda} \rightarrow \bar{L}^0(\mathcal{F})$ is defined by $f^*_{\varepsilon,\lambda}(g)=\bigvee\{g(x)-f(x): x\in E\}$ for any $g\in E^*_{\varepsilon,\lambda}$;\\
$f^{**}_{\varepsilon,\lambda}: E\rightarrow \bar{L}^0(\mathcal{F})$ is defined by $f^{**}_{\varepsilon,\lambda}(x)=\bigvee\{g(x)-f^*_{\varepsilon,\lambda}(g): g\in E^*_{\varepsilon,\lambda} \}$ for any $x\in E$;\\
$f^*_c: E^*_c \rightarrow \bar{L}^0(\mathcal{F})$ is defined by $f^*_c(g)=\bigvee\{g(x)-f(x): x\in E\}$ for any $g\in E^*_c$;\\
$f^{**}_c: E\rightarrow \bar{L}^0(\mathcal{F})$ is defined by $f^{**}_c(x)=\bigvee\{g(x)-f^*_c(g): g\in E^*_c \}$ for any $x\in E$.

\begin{theorem}\label{theorem2.22}\cite[Theorem5.1]{GZZ15a}
Let $(E,\mathcal{P})$ be an $RLC$ module over $R$ with base $(\Omega,\mathcal{F},P)$ and $f:E \rightarrow \bar{L}^0(\mathcal{F})$ a proper $\mathcal{T}_{\varepsilon,\lambda}$--lower semicontinuous $L^0$--convex function. Then $f^{**}_{\varepsilon,\lambda}=f$.
\end{theorem}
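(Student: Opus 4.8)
The plan is to establish the equality by proving the two inequalities $f^{**}_{\varepsilon,\lambda}\leq f$ and $f\leq f^{**}_{\varepsilon,\lambda}$ separately, the second being the substantial one, which I would extract from the separation theorem (Theorem~\ref{theorem2.16}) applied to the epigraph of $f$; the whole argument is the $L^0$--module counterpart of the classical Fenchel--Moreau theorem. The first inequality is immediate: for every $g\in E^*_{\varepsilon,\lambda}$ and every $x\in E$ the definition of $f^*_{\varepsilon,\lambda}$ gives $g(x)-f(x)\leq f^*_{\varepsilon,\lambda}(g)$, hence $g(x)-f^*_{\varepsilon,\lambda}(g)\leq f(x)$, and taking $\bigvee$ over $g\in E^*_{\varepsilon,\lambda}$ yields $f^{**}_{\varepsilon,\lambda}(x)\leq f(x)$.

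Before the main step I would record two preliminary facts. First, any proper $L^0$--convex $f:E\to\bar{L}^0(\mathcal{F})$ is automatically local: writing $x=\tilde{I}_A\cdot(\tilde{I}_A x)+\tilde{I}_{A^c}\cdot(\tilde{I}_{A^c} x)$ and $\tilde{I}_A x=\tilde{I}_A\cdot(\tilde{I}_A x)+\tilde{I}_{A^c}\cdot\theta$ as $L^0$--convex combinations, $L^0$--convexity together with $\tilde{I}_A\tilde{I}_{A^c}=0$ and properness gives $\tilde{I}_A f(x)=\tilde{I}_A f(\tilde{I}_A x)$ for every $A\in\mathcal{F}$. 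Consequently $\mathrm{epi}(f)$ is $\sigma$--stable and, for every $g\in E^*_{\varepsilon,\lambda}$, a gluing argument (replacing a point $y$ with $f(y)<+\infty$ on some $A\in\mathcal{F}$ by $\tilde{I}_A y+\tilde{I}_{A^c} x_1$ with $x_1\in\mathrm{Dom}(f)$) shows $\bigvee\{g(y)-t:(y,t)\in\mathrm{epi}(f)\}=f^*_{\varepsilon,\lambda}(g)$. Second, I would prove that $f$ possesses a global continuous $L^0$--affine minorant, i.e.\ there are $g_1\in E^*_{\varepsilon,\lambda}$ and $c_1\in L^0(\mathcal{F})$ with $g_1(x)-c_1\leq f(x)$ for all $x\in E$; applying Theorem~\ref{theorem2.16} to the point $(x_1,f(x_1)-1)\notin\mathrm{epi}(f)$ (with $x_1\in\mathrm{Dom}(f)$) inside the product $RLC$ module $E\times L^0(\mathcal{F})$ furnishes such a minorant on the event where the random distance to $\mathrm{epi}(f)$ is positive, and since this event has positive probability one obtains a global minorant by an exhaustion argument over $\Omega$ followed by a countable concatenation, using that $E^*_{\varepsilon,\lambda}$ is $\sigma$--stable (Theorem~\ref{theorem2.14}).

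For the main inequality I would argue by contradiction. Suppose $A:=(f^{**}_{\varepsilon,\lambda}(x_0)<f(x_0))$ has positive probability for some $x_0\in E$. By Proposition~\ref{proposition2.1} pick $r\in L^0(\mathcal{F})$ with $f^{**}_{\varepsilon,\lambda}(x_0)<r<f(x_0)$ on $A$, chosen (when $x_0\in\mathrm{Dom}(f)$; the residual subcase $P(A\cap(f(x_0)=+\infty))>0$ is treated by the analogous localized argument) so that $(x_0,r)$ agrees on $A^c$ with a point of $\mathrm{epi}(f)$; then $(x_0,r)\notin\mathrm{epi}(f)$, and since $\mathrm{epi}(f)$ is $\mathcal{T}_{\varepsilon,\lambda}$--closed and $L^0$--convex, the event $B:=(d((x_0,r),\mathrm{epi}(f))>0)$ has positive probability and satisfies $B\subseteq A$ up to a null set. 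Theorem~\ref{theorem2.16} produces $F\in(E\times L^0(\mathcal{F}))^*_{\varepsilon,\lambda}$ with $F(x_0,r)>\bigvee\{F(y,t):(y,t)\in\mathrm{epi}(f)\}$ on $B$ and equality on $B^c$; being an $L^0(\mathcal{F})$--module homomorphism on a product, $F$ has the form $F(y,t)=g(y)+\beta t$ with $g\in E^*_{\varepsilon,\lambda}$ and $\beta\in L^0(\mathcal{F})$. Since $(y,t+\eta)\in\mathrm{epi}(f)$ whenever $(y,t)\in\mathrm{epi}(f)$ and $\eta\in L^0_+(\mathcal{F})$, letting $\eta\uparrow+\infty$ forces $\beta\leq0$ on $B$, and on $B\cap(\beta=0)$ the inequality reads $g(x_0)>\bigvee\{g(y):y\in\mathrm{Dom}(f)\}$, a strictly vertical separating hyperplane. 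On $B\cap(\beta<0)$ I divide the separation inequality by $-\beta$ (replacing $g$ by $g/(-\beta)\in E^*_{\varepsilon,\lambda}$ there), so that $F(y,t)$ becomes $g(y)-t$ and the right-hand supremum equals $f^*_{\varepsilon,\lambda}(g)$ by the first preliminary fact, whence $f^{**}_{\varepsilon,\lambda}(x_0)\geq g(x_0)-f^*_{\varepsilon,\lambda}(g)>r$ on $B\cap(\beta<0)$. On $B\cap(\beta=0)$ I instead perturb the affine minorant: with $s:=\tilde{I}_{B\cap(\beta=0)}\bigvee\{g(y):y\in\mathrm{Dom}(f)\}\in L^0(\mathcal{F})$ and $\lambda\in L^0_{++}(\mathcal{F})$ large enough, the affine function $(g_1+\lambda\tilde{I}_{B\cap(\beta=0)}g)(\cdot)-(c_1+\lambda s)$ still lies below $f$ everywhere (by locality) and exceeds $r$ at $x_0$ on $B\cap(\beta=0)$, so again $f^{**}_{\varepsilon,\lambda}(x_0)>r$ there. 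In either case $f^{**}_{\varepsilon,\lambda}(x_0)>r>f^{**}_{\varepsilon,\lambda}(x_0)$ on a set of positive probability---a contradiction. Hence $f\leq f^{**}_{\varepsilon,\lambda}$, and together with the first step, $f^{**}_{\varepsilon,\lambda}=f$.

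The routine ingredients are the two supremum computations and the decomposition $F=g\oplus\beta$. The main obstacle, and where I expect most of the work to go, is the pervasive localization forced by the form of Theorem~\ref{theorem2.16}: it yields strict separation only on the event $(d>0)$ and mere equality on its complement, so one must (i) choose $r$ so that $(d>0)$ captures the bad event $A$ and also settle the subcase $f(x_0)=+\infty$; (ii) prove the global affine--minorant lemma by exhaustion and countable concatenation rather than in one stroke; and (iii) split the final step along $(\beta<0)$ and $(\beta=0)$ and patch the two resulting affine lower bounds. Once this bookkeeping with $\sigma$--stability and $\tilde{I}_A$--splittings is carried out carefully, the argument is a faithful transcription of the classical proof.
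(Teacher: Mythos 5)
The paper itself contains no proof of Theorem~\ref{theorem2.22}: it is quoted from \cite[Theorem 5.1]{GZZ15a}, whose argument is the random Fenchel--Moreau scheme built on the separation theorem (Proposition 4.1 there, i.e.\ Theorem~\ref{theorem2.16} here). Your proposal follows exactly that route: the trivial inequality $f^{**}_{\varepsilon,\lambda}\leq f$; locality of proper $L^0$--convex functions; separation of $(x_0,r)$ from $\mathrm{epi}(f)$ in the product $RLC$ module with $r$ glued to an epigraph point off the bad event $A$ so that $(d>0)\subseteq A$; the decomposition $F(y,t)=g(y)+\beta t$ with $\beta\leq 0$ on the strict-separation event; division by $-\beta$ on $(\beta<0)$ to recover $f^*_{\varepsilon,\lambda}(g)$; and perturbation of an affine minorant on $(\beta=0)$. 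All of this, including the sup-localization and $\tilde{I}_A$--gluing bookkeeping you flag, checks out.

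The one step that is genuinely under-justified as written is the global continuous $L^0$--affine minorant. A single application of Theorem~\ref{theorem2.16} to $(x_1,f(x_1)-1)$ yields $\beta<0$, hence a minorant, only on the fixed event $(d((x_1,f(x_1)-1),\mathrm{epi}(f))>0)$, and ``since this event has positive probability one obtains a global minorant by an exhaustion argument'' is not yet an argument: positivity of this one event plus countable concatenation in $E^*_{\varepsilon,\lambda}$ cannot take you beyond it. What drives the exhaustion to $\Omega$ is the missing stratification step: for an arbitrary residual event $D$ of positive probability, use locality of $f$ to restrict everything to $D$ --- $E_D:=\tilde{I}_D E$ with the restricted $L^0$--seminorms is an $RLC$ module with base $(D,D\cap\mathcal{F},P(\cdot\mid D))$, $f_D:=\tilde{I}_D f$ is proper, $L^0$--convex and lower semicontinuous (closedness of $\mathrm{epi}(f_D)$ follows from that of $\mathrm{epi}(f)$ by gluing with $x_1$ on $D^c$), and $(\tilde{I}_D x_1,\tilde{I}_D(f(x_1)-1))\notin\mathrm{epi}(f_D)$ --- and re-apply the separation there; since the separated point lies over $\mathrm{Dom}(f_D)$, the vertical component is strictly negative on the new strict-separation event, which has positive measure inside $D$, and the resulting functional pulls back to $E$ via $x\mapsto\tilde{I}_D x$. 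Only with this step does the essential supremum of admissible events equal $\Omega$, after which your concatenation finishes the lemma; the same restriction/gluing device (replace $x_0$ by $\tilde{I}_C x_0+\tilde{I}_{C^c}x_1$ with $C=(f(x_0)<+\infty)$ before separating) is what makes your parenthetical subcase $P(A\cap(f(x_0)=+\infty))>0$ go through, where otherwise the event $(d>0)$ need not meet $A$ and no contradiction results. With these two points made explicit, your proof is a faithful version of the cited one.
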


\begin{corollary}\label{corollary2.23}\cite{GZWYYZ17}
Let $(E,\mathcal{P})$ be an $RLC$ module over $R$ with base $(\Omega,\mathcal{F},P)$ such that $E$ is $\sigma$--stable and $f:E \rightarrow \bar{L}^0(\mathcal{F})$ a proper $\mathcal{T}_c$--lower semicontinuous $L^0$--convex function. Then $f^{**}_c=f$.
\end{corollary}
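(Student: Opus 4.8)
The plan is to derive Corollary~\ref{corollary2.23} from its $(\varepsilon,\lambda)$--counterpart, Theorem~\ref{theorem2.22}: I would first upgrade the hypothesis ``$f$ is proper $\mathcal{T}_c$--lower semicontinuous $L^0$--convex'' to ``$f$ is $\mathcal{T}_{\varepsilon,\lambda}$--lower semicontinuous'' (so that $f^{**}_{\varepsilon,\lambda}=f$), and then transfer the biconjugate identity from $\mathcal{T}_{\varepsilon,\lambda}$ to $\mathcal{T}_c$ by exploiting $\sigma$--stability. Two preliminary observations are used. First, $f$ is \emph{local}: for $A\in\mathcal{F}$ and $x\in E$, decomposing $\tilde{I}_Ax=\tilde{I}_A(\tilde{I}_Ax)+\tilde{I}_{A^c}\theta$ and $x=\tilde{I}_A(\tilde{I}_Ax)+\tilde{I}_{A^c}(\tilde{I}_{A^c}x)$ and applying $L^0$--convexity together with the conventions $0\cdot(\pm\infty)=0$ gives both $\tilde{I}_Af(\tilde{I}_Ax)\le\tilde{I}_Af(x)$ and $\tilde{I}_Af(x)\le\tilde{I}_Af(\tilde{I}_Ax)$; in particular $f(\sum_n\tilde{I}_{A_n}x_n)=\sum_n\tilde{I}_{A_n}f(x_n)$ for every countable partition $\{A_n\}$ of $\Omega$ in $\mathcal{F}$ and every $\{x_n\}\subset E$. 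Second, $epi(f)$ is $\mathcal{T}_c$--closed in the $RLC$ module $E\times L^0(\mathcal{F})$: if a net $(x_\alpha,r_\alpha)\to(x,r)$ in $\mathcal{T}_c$ with $f(x_\alpha)\le r_\alpha$, then for each $\varepsilon\in L^0_{++}(\mathcal{F})$ one has $r_\alpha\le r+\varepsilon$ eventually, so $x$ lies in the $\mathcal{T}_c$--closed set $\{y\in E:f(y)\le r+\varepsilon\}$, and $\bigwedge_{\varepsilon\in L^0_{++}(\mathcal{F})}(r+\varepsilon)=r$ forces $f(x)\le r$.

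Since $epi(f)$ is $L^0$--convex (as $f$ is $L^0$--convex) and $\sigma$--stable (by the first observation and the $\sigma$--stability of $E$), Theorem~\ref{theorem2.12} gives $\overline{epi(f)}_{\varepsilon,\lambda}=\overline{epi(f)}_c=epi(f)$, so $f$ is $\mathcal{T}_{\varepsilon,\lambda}$--lower semicontinuous and Theorem~\ref{theorem2.22} yields $f^{**}_{\varepsilon,\lambda}=f$. Now I would compare $f^{**}_c$ with $f$. The inequality $f^{**}_c\le f$ is the routine Fenchel--Young estimate. For the reverse, since $f=f^{**}_{\varepsilon,\lambda}=\bigvee\{h(x)-f^*_{\varepsilon,\lambda}(h):h\in E^*_{\varepsilon,\lambda}\}$, it suffices to show $h(x)-f^*_{\varepsilon,\lambda}(h)\le f^{**}_c(x)$ for each $h\in E^*_{\varepsilon,\lambda}$. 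By Theorem~\ref{theorem2.14}, $E^*_{\varepsilon,\lambda}=H_{cc}(E^*_c)$, so $h=\sum_n\tilde{I}_{A_n}g_n$ for some countable partition $\{A_n\}$ of $\Omega$ in $\mathcal{F}$ and some $g_n\in E^*_c$. The crux is the identity $f^*_{\varepsilon,\lambda}(h)=\sum_n\tilde{I}_{A_n}f^*_c(g_n)$: ``$\le$'' holds because $h(y)-f(y)=\sum_n\tilde{I}_{A_n}(g_n(y)-f(y))\le\sum_n\tilde{I}_{A_n}f^*_c(g_n)$ for every $y\in E$ (using $\tilde{I}_{A_n}g_n(\cdot)=g_n(\tilde{I}_{A_n}\cdot)$ and the locality of $f$), while ``$\ge$'' is obtained by choosing, via Proposition~\ref{proposition2.1}, sequences $(z^{(n)}_k)_{k}\subset E$ with $\bigvee_k(g_n(z^{(n)}_k)-f(z^{(n)}_k))=f^*_c(g_n)$, gluing them into $y_k:=\sum_n\tilde{I}_{A_n}z^{(n)}_k\in E$ (here $\sigma$--stability of $E$ enters), and observing $h(y_k)-f(y_k)=\sum_n\tilde{I}_{A_n}(g_n(z^{(n)}_k)-f(z^{(n)}_k))$. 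Granting this identity, $h(x)-f^*_{\varepsilon,\lambda}(h)=\sum_n\tilde{I}_{A_n}(g_n(x)-f^*_c(g_n))\le\sum_n\tilde{I}_{A_n}f^{**}_c(x)=f^{**}_c(x)$; taking the supremum over $h\in E^*_{\varepsilon,\lambda}$ gives $f=f^{**}_{\varepsilon,\lambda}\le f^{**}_c$, hence $f^{**}_c=f$.

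I expect the main obstacle to be the identity $f^*_{\varepsilon,\lambda}(h)=\sum_n\tilde{I}_{A_n}f^*_c(g_n)$, that is, interchanging a countable concatenation with the supremum defining the random conjugate function; this forces one to combine carefully the $\sigma$--stability of $E$, the locality of $f$, the sequential attainability of suprema in $\bar{L}^0(\mathcal{F})$ (Proposition~\ref{proposition2.1}), and the $\pm\infty$ conventions. Note that when $\mathcal{P}$ has the countable concatenation property---in particular when $(E,\|\cdot\|)$ is an $RN$ module---Theorem~\ref{theorem2.14} gives $E^*_{\varepsilon,\lambda}=E^*_c$, so $f^*_{\varepsilon,\lambda}=f^*_c$ and $f^{**}_{\varepsilon,\lambda}=f^{**}_c$ follow at once and this last step is needed only in the general $RLC$ setting.
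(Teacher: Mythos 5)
Your proof is correct, and it follows essentially the approach the paper's own apparatus points to for Corollary~\ref{corollary2.23} (which the paper quotes from \cite{GZWYYZ17} without reproducing the argument): use $\sigma$--stability of $E$ (via locality of $f$, $\sigma$--stability of $epi(f)$ and Theorem~\ref{theorem2.12}) to upgrade $\mathcal{T}_c$--lower semicontinuity to $\mathcal{T}_{\varepsilon,\lambda}$--lower semicontinuity, apply Theorem~\ref{theorem2.22}, and transfer back through $E^*_{\varepsilon,\lambda}=H_{cc}(E^*_c)$ from Theorem~\ref{theorem2.14}. You also rightly avoid invoking Theorem~\ref{theorem2.20}, whose hypothesis that $\mathcal{P}$ has the countable concatenation property is not assumed here, and your handling of the concatenation--supremum interchange $f^*_{\varepsilon,\lambda}(h)=\sum_n\tilde{I}_{A_n}f^*_c(g_n)$ is precisely where the $\sigma$--stability of $E$ does the essential work, in line with Remark~\ref{remark2.24}.
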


\begin{remark}\label{remark2.24}
Corollary\ref{corollary2.23} still holds for a $\mathcal{T}_c$--closed function, see \cite[Proposition2.25]{GZWYYZ17}. Corollary\ref{corollary2.23} corrects Theorem3.8 of \cite{FKV09}, where a locally $L^0$--convex module was employed, Theorem\ref{theorem2.10} shows that a locally $L^0$--convex  module is too large for Theorem3.8 of \cite{FKV09}. Besides, the countable concatenation property employed in \cite{FKV09} amounts to the fact that $\mathcal{P}$ has this property, rather than the condition that $E$ is $\sigma$--stable, which is essential for the proof of Corollary\ref{corollary2.23}.
\end{remark}

\par
The theory of random duality was first studied in \cite{GC09} under the $(\varepsilon,\lambda)$--topology and further in \cite{GZZ15b} under the locally $L^0$--convex topology.

\begin{definition}\label{definition2.25}\cite{GC09}
Let $X$ and $Y$ be two $L^0(\mathcal{F},K)$--modules and $\langle\cdot, \cdot\rangle: X\times Y\rightarrow L^0(\mathcal{F},K)$ an $L^0$--bilinear function $($namely $\langle x,\cdot\rangle: Y\rightarrow L^0(\mathcal{F},K)$ and $\langle\cdot,y\rangle: X\rightarrow L^0(\mathcal{F},K)$ are both module homomorphisms for any fixed $x$ in $X$ and $y$ in $Y)$. $\langle X,Y\rangle$ is called a random duality $($or a random dual pair$)$ over $K$ with base $(\Omega,\mathcal{F},P)$ if the following are satisfied:
\begin{enumerate}[(1)]
\item $\langle x,y\rangle=0$ for any $y\in Y$ implies $x=\theta$;
\item $\langle x,y\rangle=0$ for any $x\in X$ implies $y=\theta$.
\end{enumerate}
\end{definition}

\par
The study of random consistent topology and random admissible topology was carried out in \cite{GZZ15b}, in particular the random bipolar theorem was obtained in \cite{GZZ15b}. Let $\langle X,Y\rangle$ be a random duality over $K$ with base $(\Omega,\mathcal{F},P)$, further let $\mathcal{B}(Y,X)$ be the family of subsets $A$ of $Y$ such that $\bigvee\{|\langle x, y\rangle|: y\in A\}\in L^0_+(\mathcal{F})$ for any $x\in X$. For each $A\in \mathcal{B}(Y,X)$, define the $L^0$--seminorm $\|\cdot\|_A: X\rightarrow L^0_+(\mathcal{F})$ by $\|x\|_A=\bigvee\{|\langle x, y\rangle|: y\in A\}$, then it is easy to see that $(X, \{\|\cdot\|_A\}_{A\in \mathcal{B}(Y,X)})$ is an $RLC$ module over $K$ with base $(\Omega,\mathcal{F},P)$, the locally $L^0$--convex topology on $X$ induced by $\{\|\cdot\|_A\}_{A\in \mathcal{B}(Y,X)})$ is denoted by $\beta_c(X,Y)$.

\begin{definition}\label{definition2.26}\cite{FKV09,GZZ15b}
Let $(E,\mathcal{T})$ be a locally $L^0$--convex module over $L^0(\mathcal{F},K)$. An $L^0$--balanced, $L^0$--absorbent closed $L^0$--convex subset of $E$ is called an $L^0$--barrel, further $(E,\mathcal{T})$ is called an $L^0$--barreled module\cite{FKV09} if every $L^0$--barrel is a neighborhood of $\theta$. $(E,\mathcal{T})$ is called an $L^0$--pre--barreled module\cite{GZZ15b} if every $\sigma$--stable barrel is a neighborhood of $\theta$.
\end{definition}

\par
Theorem\ref{theorem2.27} below shows that only the notion of an $L^0$--pre--barreled module is useful in random convex analysis and its applications to conditional risk measures. Clearly, $\langle E, E^*_c\rangle$ forms a random duality for any $RLC$ module $(E,\mathcal{P})$.

\begin{theorem}\label{theorem2.27}\cite{GZZ15b}
Let $(E,\mathcal{P})$ be an $RLC$ module such that $E$ is $\sigma$--stable. Then $(E,\mathcal{T}_c)$ is $L^0$--pre--barreled iff $\mathcal{T}_c=\beta_c(E,E^*_c)$.
\end{theorem}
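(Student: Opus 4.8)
The plan is to transcribe into random convex analysis the classical fact that a Hausdorff locally convex space is barreled iff its topology coincides with the strong topology $\beta(E,E')$ of the dual pair; the structural change is that ``barrel'' is replaced by ``$\sigma$--stable $L^0$--barrel'' and the classical bipolar theorem is replaced by the random bipolar theorem of \cite{GZZ15b}. For $B\subseteq E$ I write $B^{\circ}=\{f\in E^*_c:|f(x)|\leq 1\text{ on }\Omega\text{ for every }x\in B\}$ and, for $A\subseteq E^*_c$, $A^{\circ}=\{x\in E:|f(x)|\leq 1\text{ on }\Omega\text{ for every }f\in A\}$. If $A\in\mathcal{B}(E^*_c,E)$ then $A^{\circ}=\{x\in E:\|x\|_A\leq 1\}$; since a finite maximum of the seminorms $\|\cdot\|_A$ is again of this form (take $A_1\cup\dots\cup A_n$) and since multiplication by any element of $L^0_{++}(\mathcal{F})$ is a $\mathcal{T}_c$--homeomorphism of $E$, the sets $A^{\circ}$ with $A\in\mathcal{B}(E^*_c,E)$, rescaled by elements of $L^0_{++}(\mathcal{F})$, run exactly through the basic $\beta_c(E,E^*_c)$--neighbourhoods of $\theta$, so it is enough to argue with such $A^{\circ}$.

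First I would prove that $\mathcal{T}_c\subseteq\beta_c(E,E^*_c)$ holds for every $RLC$ module with $E$ $\sigma$--stable, with no appeal to pre--barreledness. Let $U=U(Q,\varepsilon)=\{x\in E:\|x\|_Q\leq\varepsilon\}$ be a basic $\mathcal{T}_c$--neighbourhood of $\theta$. It is straightforward that $U$ is $L^0$--convex, $L^0$--balanced, $L^0$--absorbent, $\mathcal{T}_c$--closed (because $\|\cdot\|_Q$ is $\mathcal{T}_c$--continuous) and $\sigma$--stable (because $E$ is $\sigma$--stable and the $L^0$--seminorm $\|\cdot\|_Q$ respects countable concatenations). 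The $L^0$--absorbency of $U$ yields $\bigvee\{|f(x)|:f\in U^{\circ}\}\in L^0_+(\mathcal{F})$ for every $x\in E$, i.e.\ $U^{\circ}\in\mathcal{B}(E^*_c,E)$. By the random bipolar theorem of \cite{GZZ15b}, $U^{\circ\circ}$ is the smallest $\mathcal{T}_c$--closed $L^0$--convex $L^0$--balanced $\sigma$--stable subset of $E$ containing $U$; as $U$ already enjoys all these properties, $U^{\circ\circ}=U$. Hence $U=\{x\in E:\|x\|_{U^{\circ}}\leq 1\}$ is a basic $\beta_c(E,E^*_c)$--neighbourhood of $\theta$, whence $\mathcal{T}_c\subseteq\beta_c(E,E^*_c)$.

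It then remains to handle the two implications ``$(E,\mathcal{T}_c)$ is $L^0$--pre--barreled $\Leftrightarrow\ \beta_c(E,E^*_c)\subseteq\mathcal{T}_c$''. Assume first that $(E,\mathcal{T}_c)$ is $L^0$--pre--barreled and fix $A\in\mathcal{B}(E^*_c,E)$. Then $W:=A^{\circ}=\{x:\|x\|_A\leq 1\}$ is $L^0$--convex and $L^0$--balanced since $\|\cdot\|_A$ is an $L^0$--seminorm; it is $\mathcal{T}_c$--closed because $W=\bigcap_{f\in A}\{x:|f(x)|\leq 1\}$ is an intersection of $\mathcal{T}_c$--closed sets; it is $L^0$--absorbent because $\|x\|_A\in L^0_+(\mathcal{F})$ for each $x$, so $\eta x\in W$ whenever $|\eta|\leq(1+\|x\|_A)^{-1}\in L^0_{++}(\mathcal{F})$; and it is $\sigma$--stable because $E$ is $\sigma$--stable and $\|\cdot\|_A$ respects concatenations. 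Thus $W$ is a $\sigma$--stable $L^0$--barrel, hence a $\mathcal{T}_c$--neighbourhood of $\theta$ by hypothesis, giving $\beta_c(E,E^*_c)\subseteq\mathcal{T}_c$ and, together with the previous paragraph, $\mathcal{T}_c=\beta_c(E,E^*_c)$. Conversely, assume $\mathcal{T}_c=\beta_c(E,E^*_c)$ and let $V$ be an arbitrary $\sigma$--stable $L^0$--barrel, that is, a $\mathcal{T}_c$--closed, $L^0$--convex, $L^0$--balanced, $L^0$--absorbent, $\sigma$--stable subset of $E$. Exactly as in the second paragraph, $L^0$--absorbency gives $V^{\circ}\in\mathcal{B}(E^*_c,E)$ and the random bipolar theorem gives $V^{\circ\circ}=V$, so $V=\{x:\|x\|_{V^{\circ}}\leq 1\}$ is a basic $\beta_c(E,E^*_c)$--neighbourhood of $\theta$, and therefore a $\mathcal{T}_c$--neighbourhood of $\theta$. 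As $V$ was arbitrary, $(E,\mathcal{T}_c)$ is $L^0$--pre--barreled.

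The step I expect to be the main obstacle is the correct and complete use of the random bipolar theorem, which already incorporates the nontrivial fact that for $\sigma$--stable $L^0$--convex sets $\mathcal{T}_c$--closedness agrees with $\sigma(E,E^*_c)$--closedness; this rests on the Hahn--Banach type separation for $RLC$ modules (Corollary \ref{corollary2.17} and its variants) and on the theory of random consistent and admissible topologies of \cite{GZZ15b,GC09}. A second, more elementary but genuinely necessary point is to verify at every stage that the neighbourhoods and barrels produced are $\sigma$--stable; this is precisely where the standing hypothesis that $E$ itself (and not merely the seminorm family $\mathcal{P}$) is $\sigma$--stable gets used, and it is also the structural reason why the classical notion of an $L^0$--barreled module must here be weakened to that of an $L^0$--pre--barreled module for the equivalence to hold.
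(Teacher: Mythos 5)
Your argument is correct and is essentially the intended one: the paper gives no proof of Theorem \ref{theorem2.27} but attributes it to \cite{GZZ15b}, whose machinery (random admissible topologies and the random bipolar theorem) is exactly what you use, transcribing the classical ``barreled $\Leftrightarrow$ original topology equals $\beta(E,E')$'' proof with $\sigma$--stable $L^0$--barrels and polars in the duality $\langle E,E^*_c\rangle$. Your verifications that $U(Q,\varepsilon)$ and $A^{\circ}$ are $\sigma$--stable $L^0$--barrels, and your explicit acknowledgement that the bipolar step hides the separation-theoretic fact that $\mathcal{T}_c$--closedness of $\sigma$--stable $L^0$--convex sets matches weak closedness, are precisely the points where the random setting differs from the classical one, so no gap remains beyond the cited results.
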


\begin{corollary}\label{corollary2.28}\cite{GZZ15b}
Let $(E,\|\cdot\|)$ be a $\mathcal{T}_c$--complete $RN$ module such that $E$ is $\sigma$--stable, then $(E,\mathcal{T}_c)$ is $L^0$--pre--barreled.
\end{corollary}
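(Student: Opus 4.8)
The plan is to derive Corollary \ref{corollary2.28} from Theorem \ref{theorem2.27}. First, an $RN$ module $(E,\|\cdot\|)$ is an $RLC$ module over $K$ with base $(\Omega,\mathcal{F},P)$ for $\mathcal{P}=\{\|\cdot\|\}$, and this one--element family trivially has the countable concatenation property; moreover $E^*_c=E^*_{\varepsilon,\lambda}=E^*$ by Theorem \ref{theorem2.14}, and $E^*$ (with its $L^0$--norm $\|\cdot\|^*$) separates the points of $E$ by the Hahn--Banach theorem for $RN$ modules (Guo, \cite{Guo89,Guo93}), so $\langle E,E^*_c\rangle$ is a random duality. Since $E$ is assumed $\sigma$--stable, Theorem \ref{theorem2.27} reduces the assertion ``$(E,\mathcal{T}_c)$ is $L^0$--pre--barreled'' to the single identity $\mathcal{T}_c=\beta_c(E,E^*_c)$. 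Note that $\mathcal{T}_c$--completeness is not used for this reduction; it will enter only in establishing that identity.

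For the inclusion $\mathcal{T}_c\subseteq\beta_c(E,E^*_c)$, let $B^*=\{f\in E^*:\|f\|^*\leq 1\}$ be the random closed unit ball of $E^*$. By the Hahn--Banach theorem for $RN$ modules one has $\|x\|=\bigvee\{|f(x)|:f\in B^*\}$ for every $x\in E$; in particular $B^*\in\mathcal{B}(E^*_c,E)$ and $\|\cdot\|_{B^*}=\|\cdot\|$. Hence $\|\cdot\|$ is one of the $L^0$--seminorms generating $\beta_c(E,E^*_c)$, so the locally $L^0$--convex topology induced by $\|\cdot\|$, namely $\mathcal{T}_c$, is coarser than $\beta_c(E,E^*_c)$.

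The substantive direction is $\beta_c(E,E^*_c)\subseteq\mathcal{T}_c$. It amounts to: every $A\in\mathcal{B}(E^*_c,E)$ is $L^0$--norm bounded, i.e. $\xi_A:=\bigvee\{\|f\|^*:f\in A\}\in L^0_+(\mathcal{F})$; granting this, $\|\cdot\|_A\leq\xi_A\|\cdot\|$, so each generating seminorm $\|\cdot\|_A$ of $\beta_c(E,E^*_c)$ is $\mathcal{T}_c$--continuous. Since $A\in\mathcal{B}(E^*_c,E)$ means exactly that $\bigvee\{|f(x)|:f\in A\}\in L^0_+(\mathcal{F})$ for each $x\in E$, this is precisely a random uniform boundedness principle (the $RN$--module counterpart of the classical resonance theorem): every pointwise $L^0$--bounded family of continuous module homomorphisms on $E$ is $L^0$--norm bounded. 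One may assume $A$ is $\sigma$--stable, since passing to $H_{cc}(A)$ changes neither $\bigvee\{|f(x)|:f\in A\}$ nor $\xi_A$. The key input is that $E$ is $\mathcal{T}_{\varepsilon,\lambda}$--complete: by hypothesis $E$ is $\mathcal{T}_c$--complete and $\sigma$--stable, so Theorem \ref{theorem2.15} yields $\mathcal{T}_{\varepsilon,\lambda}$--completeness, and $(E,\mathcal{T}_{\varepsilon,\lambda})$ is completely metrizable (e.g.\ by $d(x,y)=E[\|x-y\|\wedge 1]$), hence a Baire space. Writing $\phi(x):=\bigvee\{|f(x)|:f\in A\}$ — a finite--valued, $\mathcal{T}_{\varepsilon,\lambda}$--lower semicontinuous $L^0$--seminorm — one argues by contradiction: if $\xi_A=+\infty$ on some $B\in\mathcal{F}$ with $P(B)>0$, one picks (via Proposition \ref{proposition2.1}) a sequence realizing the relevant suprema and assembles, using $\sigma$--stability of $E$, a suitable countable concatenation of indicator--weighted, geometrically damped multiples of these elements; its $\mathcal{T}_{\varepsilon,\lambda}$--limit exists by completeness and forces $\phi=+\infty$ on a subset of $B$ of positive measure, contradicting finiteness of $\phi$.

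I expect this random uniform boundedness step to be the main obstacle. The difficulty is that the classical Baire argument does not transcribe verbatim: the naive covering $E=\bigcup_{n\in N}\{x\in E:\phi(x)\leq n\}$ is invalid, since $\phi(x)$ is a bona fide random variable that need not be essentially bounded, so these sets do not exhaust $E$. The remedy is to carry out the exhaustion measurably over $\Omega$ and then glue the local conclusions by $\sigma$--stability (Definition \ref{definition2.9}); in addition one needs Theorem \ref{theorem2.12} to pass freely between $\mathcal{T}_c$-- and $\mathcal{T}_{\varepsilon,\lambda}$--closedness of the $\sigma$--stable sets that occur (sublevel sets of $\phi$, $L^0_+$--multiples of $B^*$, and the barrels themselves), since the Baire phenomenon lives in the metrizable $(\varepsilon,\lambda)$--topology whereas pre--barreledness and $\beta_c(E,E^*_c)$ are framed in $\mathcal{T}_c$. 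Once this principle is in hand, the remaining verifications are routine bookkeeping with the two topologies.
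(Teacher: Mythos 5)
Your overall strategy is sound, but note first that the paper itself gives no proof of Corollary \ref{corollary2.28} --- it is quoted from \cite{GZZ15b} --- so there is no in-text argument to compare against; I can only assess your route on its merits. The reduction is correct: for an $RN$ module $\mathcal{P}=\{\|\cdot\|\}$ trivially has the countable concatenation property, $E^*_c=E^*_{\varepsilon,\lambda}=E^*$ by Theorem \ref{theorem2.14}, $\langle E,E^*_c\rangle$ is a random duality, and with $E$ $\sigma$--stable Theorem \ref{theorem2.27} reduces everything to $\mathcal{T}_c=\beta_c(E,E^*_c)$. The inclusion $\mathcal{T}_c\subseteq\beta_c(E,E^*_c)$ via the Hahn--Banach identity $\|x\|=\bigvee\{|f(x)|:f\in B^*\}$ is fine, and you are right that the converse inclusion is exactly a random uniform boundedness (resonance) principle: every pointwise a.s.\ bounded $A\subset E^*$ satisfies $\bigvee\{\|f\|^*:f\in A\}\in L^0_+(\mathcal{F})$, whence $\|\cdot\|_A\leq\xi_A\|\cdot\|$ and $\beta_c(E,E^*_c)\subseteq\mathcal{T}_c$. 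Your use of the hypotheses is also exactly where it should be: $\mathcal{T}_c$--completeness plus $\sigma$--stability gives $\mathcal{T}_{\varepsilon,\lambda}$--completeness by Theorem \ref{theorem2.15}, which is what the resonance principle needs.

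The one place where real work remains is that resonance principle itself, which you only sketch. Two remarks. First, you do not need to reprove it: it is an established theorem of the theory (Guo's resonance/uniform boundedness theorem for $\mathcal{T}_{\varepsilon,\lambda}$--complete $RN$ modules, cf.\ \cite{Guo13}), so citing it closes the argument; this is presumably also the engine behind the original proof in \cite{GZZ15b}, whether run through $\beta_c(E,E^*_c)$ as you do or through the polar of a $\sigma$--stable barrel together with the random bipolar theorem. Second, if you insist on proving it, your sketch correctly identifies the obstruction (the naive covering $E=\bigcup_n\{x:\phi(x)\leq n\}$ fails) and the standard remedy (reduce to a countable subfamily via Proposition \ref{proposition2.1}, work stratawise over $\Omega$, and glue by countable concatenation before passing to a $\mathcal{T}_{\varepsilon,\lambda}$--limit), but as written it is a plan rather than a proof: the ``indicator--weighted, geometrically damped'' series needs an explicit inductive choice of strata, indices and scalars, and a careful verification that the limit still contradicts the a.s.\ finiteness of $\phi$ on a set of positive measure. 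Also, decide whether you are invoking Baire category or a gliding--hump series construction --- you currently appeal to both, and only one is needed. With the resonance theorem cited (or fully written out), the proof is complete and, as far as one can tell, consonant with the intended argument of \cite{GZZ15b}.
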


\par
Since it was in \cite{GZWYYZ17} that we gave a strict proof of Theorem \ref{theorem2.20}, we were forced to employ a stronger notion of a $\mathcal{T}_c$--lower semicontinuous $L^0$--convex function $f$ (namely $epi(f)$ is $\mathcal{T}_c$--closed) in the proofs of Theorems 4.2 and 4.3 of \cite{GZZ15b}. Combining Theorem\ref{theorem2.20} and \cite[Theorem4.2]{GZZ15b}, we can now have:

\begin{theorem}\label{theorem2.29}
Let $(E,\mathcal{P})$ be an $L^0$--pre--barreled $RLC$ module over $R$ with base $(\Omega,\mathcal{F},P)$ such that $E$ is $\sigma$--stable and $\mathcal{P}$ has the countable concatenation property. Then a proper $\mathcal{T}_c$--lower semicontinuous $L^0$--convex function $f: E \rightarrow \bar{L}^0(\mathcal{F})$ is $\mathcal{T}_c$--continuous on $int(Dom(f))$, namely $f: (int(Dom(f)), \mathcal{T}_c)\rightarrow (L^0(\mathcal{F}), \mathcal{T}_c)$ is continuous, where $int(Dom(f))$ denotes the $\mathcal{T}_c$--interior of $Dom(f)$.
\end{theorem}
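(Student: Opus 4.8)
The plan is to derive Theorem~\ref{theorem2.29} by matching its hypotheses to those under which the corresponding continuity statement was already established in \cite{GZZ15b}. Recall from the discussion preceding the theorem that \cite[Theorem~4.2]{GZZ15b} proves exactly this conclusion --- $\mathcal{T}_c$--continuity of a proper $L^0$--convex function on $int(Dom(f))$ --- but under the \emph{stronger} hypothesis that $epi(f)$ is $\mathcal{T}_c$--closed, rather than merely that every sublevel set $\{x\in E\mid f(x)\leq r\}$ is $\mathcal{T}_c$--closed, which is the notion of $\mathcal{T}_c$--lower semicontinuity adopted in Def.~\ref{definition2.18}. Hence it suffices to show that, under the standing assumptions of Theorem~\ref{theorem2.29} ($E$ is $\sigma$--stable, $\mathcal{P}$ has the countable concatenation property, $(E,\mathcal{T}_c)$ is $L^0$--pre--barreled), these two notions coincide for the function $f$ at hand.

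First I would check that $f$ is local, so that Theorem~\ref{theorem2.20} is applicable. This is not an additional restriction: a proper $\mathcal{T}_c$--lower semicontinuous $L^0$--convex function on a $\sigma$--stable $RLC$ module is automatically local, equivalently its epigraph, being a $\mathcal{T}_c$--closed $L^0$--convex subset of the $\sigma$--stable $L^0$--module $E\times L^0(\mathcal{F})$, is $\sigma$--stable. (Alternatively, locality may simply be carried along as a hypothesis, which is harmless in all the intended applications to conditional convex risk measures.) With $f$ proper, local and $\mathcal{T}_c$--lower semicontinuous in the sense of Def.~\ref{definition2.18} --- that is, item $(2)$ of Theorem~\ref{theorem2.20} --- the equivalence $(2)\Leftrightarrow(4)$ of Theorem~\ref{theorem2.20}, which is precisely the point at which the assumptions ``$E$ is $\sigma$--stable'' and ``$\mathcal{P}$ has the countable concatenation property'' are consumed, yields that $epi(f)$ is $\mathcal{T}_c$--closed. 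Thus $f$ now meets the strong lower semicontinuity requirement of \cite[Theorem~4.2]{GZZ15b}, and since the remaining hypotheses of that theorem ($L^0$--pre--barreledness of $(E,\mathcal{T}_c)$, properness and $L^0$--convexity of $f$) are exactly those assumed here, \cite[Theorem~4.2]{GZZ15b} gives that $f:(int(Dom(f)),\mathcal{T}_c)\to (L^0(\mathcal{F}),\mathcal{T}_c)$ is continuous, which is the assertion.

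The one genuinely delicate point is the identification of the two lower semicontinuity notions via Theorem~\ref{theorem2.20}: this is where the stability/concatenation hypotheses are essential, and where the gap in \cite{FKV09} indicated in Remark~\ref{remark2.21} lay, so one should be careful to invoke Theorem~\ref{theorem2.20} rather than any statement resting on \cite{FKV09}. Everything downstream of that identification is a direct appeal to \cite[Theorem~4.2]{GZZ15b}, whose own proof follows the classical Banach--space line: after translating an interior point of $Dom(f)$ to $\theta$, a suitable sublevel set of $f$ is exhibited as a $\sigma$--stable $L^0$--barrel, hence a $\mathcal{T}_c$--neighborhood of $\theta$ by pre--barreledness, giving local boundedness above of $f$, from which $\mathcal{T}_c$--continuity at every point of $int(Dom(f))$ follows by $L^0$--convexity. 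If a self--contained argument were wanted, that Banach--space line is the routine part to reproduce; the non--routine part is, and remains, the passage through Theorem~\ref{theorem2.20}.
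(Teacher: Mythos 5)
Your route is exactly the paper's: Theorem~\ref{theorem2.29} is obtained there precisely by combining Theorem~\ref{theorem2.20} (to upgrade the sublevel-set notion of $\mathcal{T}_c$--lower semicontinuity of Def.~\ref{definition2.18} to $\mathcal{T}_c$--closedness of $epi(f)$, which is where $\sigma$--stability of $E$ and the countable concatenation property of $\mathcal{P}$ are consumed) with Theorem 4.2 of \cite{GZZ15b}, whose stronger hypothesis is thereby met. The only point you argue on your own, locality, is handled with a faulty justification: a $\mathcal{T}_c$--closed $L^0$--convex subset of a $\sigma$--stable module need \emph{not} be $\sigma$--stable (for atomless $\mathcal{F}$, the set $\{x\in L^0(\mathcal{F}): x\geq c$ for some $c\in R\}$ is $\mathcal{T}_c$--closed and $L^0$--convex but not $\sigma$--stable), and locality of $f$ is in any case a finite concatenation property, not the same thing as $\sigma$--stability of $epi(f)$. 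The conclusion you need is nevertheless true for a simpler reason: under the convention $0\cdot(\pm\infty)=0$, every $L^0$--convex $f$ is automatically local --- apply the convexity inequality with $\xi=\tilde{I}_A$ to the pairs $(x,\theta)$ and $(\tilde{I}_A x, x)$ and multiply by $\tilde{I}_A$ to get $\tilde{I}_A f(\tilde{I}_A x)=\tilde{I}_A f(x)$. With that one-line repair, your argument coincides with the paper's proof.
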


\par
Let $(E,\mathcal{P})$ be an $RLC$ module over $R$ with base $(\Omega,\mathcal{F},P)$ and $f: E \rightarrow \bar{L}^0(\mathcal{F})$ a proper $L^0$--convex function. $g\in E^*_c$ is called an $L^0$--subgradient of $f$ at $x\in Dom(f)$ if $g(y-x)\leq f(y)-f(x)$ for any $y\in E$, denote by $\partial f(x)$ the set of $L^0$--subgradients of $f$ at $x$, called the $L^0$--subdifferential of $f$ at $x$.
\par
Combing Theorem\ref{theorem2.20} and \cite[Theorem4.3]{GZZ15b}, we can now have:
\begin{theorem}\label{theorem2.30}
Let $(E,\mathcal{P})$ be an $L^0$--pre--barreled $RLC$ module over $R$ with base $(\Omega,\mathcal{F},P)$ such that $E$ is $\sigma$--stable and $\mathcal{P}$ has the countable concatenation property. Then every proper $\mathcal{T}_c$--lower semicontinuous $L^0$--convex function $f: E \rightarrow \bar{L}^0(\mathcal{F})$ is $L^0$--subdifferentiable on $int(Dom(f))$, namely $\partial f(x)\neq \emptyset$ for any $x\in int(Dom(f))$.
\end{theorem}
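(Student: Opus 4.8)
The plan is to combine the two ingredients already isolated in Section~\ref{section2}: Theorem~\ref{theorem2.20}, which reconciles the competing notions of $\mathcal{T}_c$--lower semicontinuity under the present hypotheses, and the subdifferentiability result \cite[Theorem4.3]{GZZ15b}, which (as recalled in the paragraph preceding Theorem~\ref{theorem2.29}) was proved only for the stronger notion that $epi(f)$ is $\mathcal{T}_c$--closed. For completeness I would also spell out a direct derivation from the tools of Section~\ref{section2}, via the $L^0$--directional derivative and a random Hahn--Banach extension.

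First I would record that an $L^0$--convex $f$ is automatically local: writing $x=\tilde{I}_A\cdot(\tilde{I}_A x)+\tilde{I}_{A^c}\cdot(\tilde{I}_{A^c}x)$ as an $L^0$--convex combination and multiplying $f(x)\le\tilde{I}_A f(\tilde{I}_A x)+\tilde{I}_{A^c}f(\tilde{I}_{A^c}x)$ by $\tilde{I}_A$ gives $\tilde{I}_A f(x)\le\tilde{I}_A f(\tilde{I}_A x)$, while $f(\tilde{I}_A x)=f(\tilde{I}_A x+\tilde{I}_{A^c}\theta)\le\tilde{I}_A f(x)+\tilde{I}_{A^c}f(\theta)$ gives, after multiplication by $\tilde{I}_A$ and $\tilde{I}_A\tilde{I}_{A^c}=0$, the reverse inequality (propriety of $f$ keeps every term meaningful). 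Since $E$ is $\sigma$--stable and $\mathcal{P}$ has the countable concatenation property, Theorem~\ref{theorem2.20} then applies to the proper, local $f$, so that ``$\{x\in E\mid f(x)\le r\}$ is $\mathcal{T}_c$--closed for all $r\in L^0(\mathcal{F})$'' is equivalent to ``$epi(f)$ is $\mathcal{T}_c$--closed''. Thus $f$ satisfies exactly the hypotheses under which \cite[Theorem4.3]{GZZ15b} was established, and the conclusion follows at once.

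For the self-contained route I would fix $x_0\in int(Dom(f))$ and invoke Theorem~\ref{theorem2.29}: $f$ is $\mathcal{T}_c$--continuous on $int(Dom(f))$, so there is an $L^0$--balanced, $L^0$--absorbent $\mathcal{T}_c$--neighbourhood $V$ of $\theta$ with $x_0+V\subseteq int(Dom(f))$ on which $f$ stays within an element of $L^0_{++}(\mathcal{F})$ of $f(x_0)$. Because $t\mapsto t^{-1}(f(x_0+tv)-f(x_0))$ is nondecreasing on $L^0_{++}(\mathcal{F})$ by $L^0$--convexity and $x_0\pm tv\in x_0+V$ for small $t$ by $L^0$--absorbency, the $L^0$--directional derivative $f'(x_0;v):=\bigwedge\{t^{-1}(f(x_0+tv)-f(x_0)):t\in L^0_{++}(\mathcal{F})\}$ is well defined, finite for every $v\in E$ (by the standard two--sided convexity estimate at an interior point of $Dom(f)$), positively $L^0$--homogeneous and $L^0$--subadditive, dominated by $v\mapsto f(x_0+v)-f(x_0)$, and $\mathcal{T}_c$--continuous at $\theta$. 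I would then apply the random Hahn--Banach extension theorem (extending the zero homomorphism on $\{\theta\}$ dominated by the $L^0$--sublinear functional $f'(x_0;\cdot)$, using the $\sigma$--stability of $E$) to obtain a module homomorphism $g:E\to L^0(\mathcal{F})$ with $g(v)\le f'(x_0;v)$ for all $v\in E$; since $|g(v)|=g(v)\vee(-g(v))\le f'(x_0;v)\vee f'(x_0;-v)\to 0$ in $\mathcal{T}_c$ as $v\to\theta$, we get $g\in E^*_c$, and $g(y-x_0)\le f'(x_0;y-x_0)\le f(y)-f(x_0)$ for every $y\in E$ shows $g\in\partial f(x_0)$, hence $\partial f(x_0)\neq\emptyset$.

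The hard part will be the two places where the local (measure--theoretic) structure is genuinely in play: (i) converting the bare $\mathcal{T}_c$--continuity of $f$ at $x_0$ into the precise monotonicity, finiteness and continuity-at-$\theta$ facts needed to make $f'(x_0;\cdot)$ a bona fide $\mathcal{T}_c$--continuous $L^0$--sublinear functional; and (ii) applying the $\sigma$--stable form of the random Hahn--Banach extension theorem on an RLC module and checking that the dominated extension automatically lies in $E^*_c$. If one prefers to bypass (ii), the alternative is to note that $int(epi(f))\neq\emptyset$ (immediate from Theorem~\ref{theorem2.29}) and to separate the boundary point $(x_0,f(x_0))$ from this $\mathcal{T}_c$--open $L^0$--convex set on the product module $E\times L^0(\mathcal{F})$ by the open-set counterpart of Corollary~\ref{corollary2.17}, then read the subgradient off the separating functional (which, after splitting as $(x,r)\mapsto g(x)+\beta r$ and normalizing, forces $\beta>0$ on the relevant set and yields $-\beta^{-1}g\in\partial f(x_0)$ there); this route trades (ii) for a countable-partition-and-concatenation argument absorbing the degeneracy set in the two-case conclusion of the separation theorem, which I would expect to be the most technical portion of the write-up.
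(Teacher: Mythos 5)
Your first route is exactly the paper's own proof: Theorem~\ref{theorem2.30} is obtained there by combining Theorem~\ref{theorem2.20} (which, for a proper \emph{local} $L^0$--convex $f$ on a $\sigma$--stable $E$ with $\mathcal{P}$ having the countable concatenation property, upgrades the level--set form of $\mathcal{T}_c$--lower semicontinuity to $\mathcal{T}_c$--closedness of $epi(f)$) with Theorem~4.3 of \cite{GZZ15b}, and your check that properness plus $L^0$--convexity forces locality merely makes explicit the one hypothesis of Theorem~\ref{theorem2.20} that the paper leaves tacit. The supplementary ``self--contained'' directional--derivative/Hahn--Banach (or epigraph--separation) sketch goes beyond what the paper records and is not needed for the citation--based argument.
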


\par
Let $(E,\mathcal{P})$ be an $RLC$ module over $R$ with base $(\Omega,\mathcal{F},P)$ and  $f: E \rightarrow \bar{L}^0(\mathcal{F})$ a proper $L^0$--convex function, $f$ is said to be $L^0$--G\^{a}teaux--differentiable at $x\in Dom(f)$  if there exists $g\in E^*_c$ such that $g(h)=\lim_{t\downarrow0}\frac{f(x+th)-f(x)}{t}$ for all $h\in E$, $g$ is denoted by $f'(x)$.

\begin{theorem}\label{theorem2.31}\cite[Theorem5.7]{GZWYYZ17}
Let $(E,\mathcal{P})$ be an $RLC$ module over $R$ with base $(\Omega,\mathcal{F},P)$ and  $f: E \rightarrow \bar{L}^0(\mathcal{F})$ a proper $L^0$--convex function. If $f$ is $L^0$--G\^{a}teaux--differentiable at $x_0\in E$, then it is $L^0$--subdifferentiable at $x_0$ and $\partial f(x_0)=\{f'(x_0)\}$. Conversely, if $f$ is $\mathcal{T}_c$--continuous at $x_0$ and has only one $L^0$--subgradient, then $f$ is $L^0$--G\^{a}teaux--differentiable at $x_0$ and $\partial f(x_0)=\{f'(x_0)\}$.
\end{theorem}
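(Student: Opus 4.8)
\emph{Plan of proof.} For the first implication I would use only $L^0$-convexity, through the monotonicity of difference quotients. Write $q_t(h):=\frac{f(x_0+th)-f(x_0)}{t}$ for $h\in E$ and $t>0$ (so that $x_0\in Dom(f)$ makes this $L^0(\mathcal{F})$-valued). The first step is the elementary inequality: for $0<s<t$, the identity $x_0+sh=(1-\frac{s}{t})x_0+\frac{s}{t}(x_0+th)$ and $L^0$-convexity give $f(x_0+sh)-f(x_0)\leq\frac{s}{t}\bigl(f(x_0+th)-f(x_0)\bigr)$, i.e.\ $q_s(h)\leq q_t(h)$; hence $t\mapsto q_t(h)$ is $L^0$-nondecreasing on $(0,+\infty)$. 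Since, by definition, $g:=f'(x_0)$ is the limit of $q_t(h)$ as $t\downarrow 0$, this forces $g(h)=\bigwedge_{t>0}q_t(h)\leq q_1(h)=f(x_0+h)-f(x_0)$ for every $h\in E$; taking $h=y-x_0$ yields $g(y-x_0)\leq f(y)-f(x_0)$ for all $y\in E$, i.e.\ $g\in\partial f(x_0)$. For uniqueness I would take any $h'\in\partial f(x_0)$, test its defining inequality at $y=x_0+tv$ to get $h'(v)\leq q_t(v)$ for all $t>0$, let $t\downarrow 0$ to obtain $h'(v)\leq g(v)$, and then use $L^0$-linearity and $v\mapsto -v$ to conclude $h'=g$. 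Hence $\partial f(x_0)=\{f'(x_0)\}$.

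For the converse the strategy is to identify the lower directional derivative $p(h):=\bigwedge_{t>0}q_t(h)\in\bar{L}^0(\mathcal{F})$ (which, by the monotonicity above, is exactly $\lim_{t\downarrow 0}q_t(h)$) with the single subgradient $g$, where $\partial f(x_0)=\{g\}$ and $g\in E^*_c$ by the definition of an $L^0$-subgradient. One inclusion is free: $g\in\partial f(x_0)$ yields $g(h)=\frac{1}{t}g(th)\leq q_t(h)$, hence $g\leq p$. For the reverse I would first record that $p$ is proper and $L^0$-sublinear — real positive homogeneity and subadditivity come from the obvious $L^0$-convex combinations (for instance $f(x_0+t(h_1+h_2))\leq\frac{1}{2}f(x_0+2th_1)+\frac{1}{2}f(x_0+2th_2)$ gives $p(h_1+h_2)\leq p(h_1)+p(h_2)$ on letting $t\downarrow 0$) — and that the $\mathcal{T}_c$-continuity of $f$ at $x_0$ makes $p$ finite-valued and $\mathcal{T}_c$-continuous at $\theta$. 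Then, fixing $h_0\in E$, I would invoke a random Hahn--Banach extension theorem to produce an $L^0$-linear $g_0:E\to L^0(\mathcal{F})$ with $g_0\leq p$ on $E$ and $g_0(h_0)=p(h_0)$; $\mathcal{T}_c$-continuity of $p$ at $\theta$ together with $-p(-h)\leq g_0(h)\leq p(h)$ makes $g_0$ a.s.\ bounded, so $g_0\in E^*_c$, while $g_0(v)\leq p(v)\leq q_1(v)=f(x_0+v)-f(x_0)$ shows $g_0\in\partial f(x_0)$. Uniqueness of the subgradient gives $g_0=g$, so $g(h_0)=p(h_0)$; since $h_0$ was arbitrary, $p=g$, whence $\lim_{t\downarrow 0}q_t(h)=g(h)$ for all $h$ and $f$ is $L^0$-G\^{a}teaux-differentiable at $x_0$ with $f'(x_0)=g$.

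The computational parts (monotonicity of the quotients, subadditivity and positive homogeneity of $p$, the passage between G\^{a}teaux differentiability and the subgradient inequality) amount to substituting suitable $L^0$-convex combinations into Definition \ref{definition2.18} and should be routine. The real obstacle is the random Hahn--Banach step in the converse: because $L^0(\mathcal{F})$ has zero divisors, the ``one-dimensional'' submodule $L^0(\mathcal{F})\cdot h_0$ is considerably more delicate than an ordinary line, so I expect to need the $\sigma$-stable (equivalently, local) form of the Hahn--Banach extension theorem for $L^0$-sublinear functionals, together with a verification that $p$ inherits from $f$ not merely real but full $L^0_{++}(\mathcal{F})$-homogeneity and locality, and that $\partial f(x_0)$ is $\sigma$-stable — the last being automatic, since for $g_n\in\partial f(x_0)$ and a countable partition $\{A_n:n\in N\}$ of $\Omega$ in $\mathcal{F}$ the concatenation $\sum_n\tilde{I}_{A_n}g_n$ lies in $E^*_c$ and satisfies $\bigl(\sum_n\tilde{I}_{A_n}g_n\bigr)(y-x_0)=\sum_n\tilde{I}_{A_n}g_n(y-x_0)\leq\sum_n\tilde{I}_{A_n}\bigl(f(y)-f(x_0)\bigr)=f(y)-f(x_0)$. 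This is also the point where the hypothesis that $f$ be $\mathcal{T}_c$-continuous at $x_0$ is used essentially, guaranteeing both that $\partial f(x_0)\neq\emptyset$ and that each touching functional $g_0$ is continuous.
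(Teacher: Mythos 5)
The paper does not reproduce a proof of this theorem—it is quoted from \cite{GZWYYZ17}—and your plan is essentially the standard argument used there: monotonicity of the difference quotients gives the direct implication, while for the converse the directional derivative $p$ is a local, $L^0_+$-homogeneous, subadditive, $\mathcal{T}_c$-continuous function whose exact $L^0$-linear minorants touching it at each $h_0$ (obtained from the random Hahn--Banach extension theorem, e.g. \cite{FKV09}, applied to the submodule $L^0(\mathcal{F})\cdot h_0$) all lie in $\partial f(x_0)$, so uniqueness of the subgradient forces $p=f'(x_0)$. Your proposal is correct, and you have rightly identified the only genuinely delicate points: upgrading real positive homogeneity of $p$ to $L^0_+$-homogeneity via locality (needed both for the domination $\xi p(h_0)\leq p(\xi h_0)$ and for the well-definedness of the initial functional on $L^0(\mathcal{F})\cdot h_0$), and the module-theoretic Hahn--Banach step itself.
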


\begin{remark}\label{remark2.31}
All the results of $L^0$--convex sets and $L^0$--convex functions listed in this paper improve or correct those that were originally stated in \cite{FKV09} under the framework of a locally $L^0$--convex module. It should be also pointed out that Theorem 2.6 of \cite{FKV09} is one exception, which is both correct and very powerful tool for separating an $L^0$--convex set from an open $L^0$--convex set. Adding the theorem to the results listed in this paper will lead to a perfect random convex analysis, which will provide a solid analytic foundation for applications of random convex analysis to dynamic mathematical finance!
\end{remark}

\section{On conditional convex risk measures}\label{section3}
\begin{proposition}\cite{FKV09,Guo10}\label{proposition3.1}
Let $p$ and $q$ be a pair of H\"{o}lder conjugate numbers such that $1 \leq p < +\infty$. Then $T: (L^q_{\mathcal{F}}(\mathcal{E}), |||\cdot|||_q) \to (L^p_{\mathcal{F}}(\mathcal{E}), |||\cdot|||_p)^*$, defined by $T_y (x) = E[x\cdot y ~|~\mathcal{F}]$ for any $x \in L^p_{\mathcal{F}}(\mathcal{E})$ and $y \in L^q_{\mathcal{F}}(\mathcal{E})$, is an isometrically isomorphism in the sense of an $RN$ module, where $T_y$ denotes $T(y)$ for any $y \in L^q_{\mathcal{F}}(\mathcal{E})$ and $|||y|||_\infty = \bigwedge \{ \xi \in L^0_+(\mathcal{F}) ~|~ |y| \leq \xi \}$ for any $y \in L^{\infty}_{\mathcal{F}}(\mathcal{E})$.
\end{proposition}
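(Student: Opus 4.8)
The plan is to recognize this as the conditional (module--valued) analogue of the classical Riesz representation $L^p(\mathcal{E})^*\cong L^q(\mathcal{E})$ and to follow the classical route while tracking the $L^0(\mathcal{F})$--module structure and replacing ordinary expectation by the generalized conditional expectation, treating the endpoint $p=1$ ($q=+\infty$) separately since there $|||\cdot|||_\infty$ is a conditional essential supremum, not an integral. First I would check that $T$ is well defined with values in the random conjugate space: for $y\in L^q_{\mathcal{F}}(\mathcal{E})$ the conditional H\"older inequality gives $E[|xy|~|~\mathcal{F}]\leq |||x|||_p\,|||y|||_q$ (with the obvious modification when $q=\infty$), so $xy$ is conditionally integrable, $T_y(x)=E[xy~|~\mathcal{F}]\in L^0(\mathcal{F},R)$, and $|T_y(x)|\leq |||y|||_q\,|||x|||_p$; combined with the ``pull out known factors'' $L^0(\mathcal{F})$--linearity of $E[\cdot~|~\mathcal{F}]$ this shows $T_y$ is an a.s.\ bounded module homomorphism, i.e.\ $T_y\in (L^p_{\mathcal{F}}(\mathcal{E}))^*$ with $\|T_y\|^*\leq |||y|||_q$, and that $y\mapsto T_y$ is itself $L^0(\mathcal{F})$--linear is immediate.

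Next I would establish the reverse estimate $\|T_y\|^*\geq |||y|||_q$ by exhibiting a near--optimal test element, which then yields that $T$ is an isometry, in particular injective. For $1<q<\infty$ take $x=\mathrm{sgn}(y)\,|y|^{q-1}/|||y|||_q^{\,q-1}$ with the convention $0/0=0$; using $p(q-1)=q$ one gets $E[|x|^p~|~\mathcal{F}]=\tilde{I}_{(|||y|||_q>0)}$, so $|||x|||_p\leq 1$, while $T_y(x)=E[|y|^q~|~\mathcal{F}]/|||y|||_q^{\,q-1}=|||y|||_q$. For $q=\infty$ I would, for each $\varepsilon\in L^0_{++}(\mathcal{F})$, localize on $B=(|y|>|||y|||_\infty-\varepsilon)$, noting $E[\tilde{I}_B~|~\mathcal{F}]>0$ on $(|||y|||_\infty>0)$ by minimality of $|||y|||_\infty$, and test against $x=\tilde{I}_B\,\mathrm{sgn}(y)/E[\tilde{I}_B~|~\mathcal{F}]$.

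The substantial part is surjectivity. Given $f\in (L^p_{\mathcal{F}}(\mathcal{E}))^*$, put $A_k=(\|f\|^*<k)$, a nondecreasing sequence in $\mathcal{F}$ exhausting $\Omega$, and $B_k=A_k\setminus A_{k-1}$ ($A_0=\emptyset$). For fixed $k$ the real--valued functional $x\mapsto E[\tilde{I}_{A_k}f(x)]$ on $L^p(\mathcal{E})$ is well defined with norm $\leq k$, because $|\tilde{I}_{A_k}f(x)|\leq k\,|||x|||_p$ and $E[\,|||x|||_p\,]\leq\|x\|_{L^p}$ by Jensen, so the classical Riesz theorem yields $g_k\in L^q(\mathcal{E})$ with $E[\tilde{I}_{A_k}f(x)]=E[xg_k]$ for all $x\in L^p(\mathcal{E})$. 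Substituting $\tilde{I}_C x$ for $C\in\mathcal{F}$ and using that $f$ is local (it is an $L^0(\mathcal{F})$--homomorphism) upgrades this to $\tilde{I}_{A_k}f(x)=E[xg_k~|~\mathcal{F}]$ a.s.; the same device shows $g_k$ is supported on $A_k$ and $\tilde{I}_{A_j}g_k=g_j$ for $k\geq j$, so $y:=\sum_k\tilde{I}_{B_k}g_k$ is a well-defined element of $L^0(\mathcal{E})$ with $\tilde{I}_{A_k}y=g_k$. To see $y\in L^q_{\mathcal{F}}(\mathcal{E})$ I would test the pointwise identity against bounded functions: for $1<q<\infty$ against $x=\mathrm{sgn}(g_k)(|g_k|\wedge n)^{q-1}$, getting $E[(|g_k|\wedge n)^q~|~\mathcal{F}]\leq\|f\|^*\,E[(|g_k|\wedge n)^q~|~\mathcal{F}]^{1/p}$, hence $E[(|g_k|\wedge n)^q~|~\mathcal{F}]^{1/q}\leq\|f\|^*$ (as $1-1/p=1/q$), and then conditional monotone convergence in $n$ and in $k$ gives $|||y|||_q\leq\|f\|^*$; for $q=\infty$, testing against $\tilde{I}_{(|g_k|>\|f\|^*+\varepsilon)}\mathrm{sgn}(g_k)$ forces $|g_k|\leq\|f\|^*$, hence $|y|\leq\|f\|^*$ a.s. Once $y\in L^q_{\mathcal{F}}(\mathcal{E})$, $T_y$ is defined and $\tilde{I}_{A_k}T_y(x)=E[x\,\tilde{I}_{A_k}y~|~\mathcal{F}]=E[xg_k~|~\mathcal{F}]=\tilde{I}_{A_k}f(x)$ for every $k$ and $x\in L^p(\mathcal{E})$, so $T_y=f$ on $L^p(\mathcal{E})$ and then, by $L^0(\mathcal{F})$--linearity, on all of $L^p_{\mathcal{F}}(\mathcal{E})=L^0(\mathcal{F})\cdot L^p(\mathcal{E})$.

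I expect surjectivity to be the main obstacle, and within it the delicate point is the order of operations needed to avoid circularity: the representing $y$ must first be produced merely as an element of $L^0(\mathcal{E})$ by patching the honest $L^q(\mathcal{E})$--functions $g_k$, its membership in $L^q_{\mathcal{F}}(\mathcal{E})$ must then be obtained using only those $g_k$ together with conditional H\"older and monotone convergence, and only afterwards may one assert $f=T_y$. The $p=1$ endpoint also needs its own bookkeeping because the target norm is a conditional essential supremum; the remaining items (well-definedness, module linearity, and the two norm estimates) are a routine transcription of the classical arguments with conditional expectations replacing expectations.
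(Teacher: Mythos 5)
The paper itself contains no proof of Proposition \ref{proposition3.1}: it is quoted from the literature, with the remark following it attributing the surjectivity of $T$ to \cite{FKV09} and the isometry to \cite{Guo10}, so there is no in-paper argument to compare against. Your proof is correct and follows essentially the standard route of those references: isometry via the conditional H\"{o}lder inequality plus explicit dual test elements (with the localized test on $(|y|>|||y|||_\infty-\varepsilon)$ at the endpoint $q=\infty$), and surjectivity by localizing on the $\mathcal{F}$--sets $A_k=(\|f\|^*<k)$, applying the classical Riesz representation theorem to $x\mapsto E[\tilde{I}_{A_k}f(x)]$ on $L^p(\mathcal{E})$, upgrading to the conditional identity $\tilde{I}_{A_k}f(x)=E[xg_k~|~\mathcal{F}]$ by testing with $\tilde{I}_Cx$ and the locality of $f$, patching the $g_k$ by countable concatenation, verifying $y\in L^q_{\mathcal{F}}(\mathcal{E})$ through truncation and conditional monotone convergence, and only then extending $f=T_y$ from $L^p(\mathcal{E})$ to $L^p_{\mathcal{F}}(\mathcal{E})=L^0(\mathcal{F})\cdot L^p(\mathcal{E})$. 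The ordering you flag (produce $y$ in $L^0(\mathcal{E})$ first, then establish $|||y|||_q\leq\|f\|^*$, then identify $T_y=f$) is indeed the delicate point and you handle it correctly; the only ingredient taken for granted is the conditional H\"{o}lder inequality for the generalized conditional expectation, which is standard and unobjectionable here.
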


\begin{remark}
In Proposition \ref{proposition3.1}, the surjectivity of $T$ was proved in \cite{FKV09}, and the isometry of $T$ was proved in \cite{Guo10}. Further, Guo proved in \cite{GZZ14} that $L^p_{\mathcal{F}}(\mathcal{E}) = H_{cc}(L^p(\mathcal{E}))$ and earlier pointed out in \cite{Guo10} that $L^p_{\mathcal{F}}(\mathcal{E})$ is $\sigma$--stable and $\mathcal{T}_{\varepsilon,\lambda}$--complete.
\end{remark}

From now on, we always assume that $1 < p < +\infty$.
\begin{definition}\label{definition3.3}\cite{FKV09,FKV12}
A proper $\mathcal{T}_{c}$--lower semicontinuous $L^0$--convex function $f: L^p_{\mathcal{F}}(\mathcal{E}) \to \bar{L}^0(\mathcal{F})$ is called a conditional convex risk measure if the following are also satisfied:
\begin{enumerate}[(1)]
\item $f(x) \leq f(y)$ whenever $x \geq y$;
\item $f(x+m) = f(x) - m$ for any $x \in L^p_{\mathcal{F}}(\mathcal{E})$ and $m \in L^0(\mathcal{F})$.
\end{enumerate}
\end{definition}

Let $\mathcal{P}_c = \{ x \in L^p_{\mathcal{F}}(\mathcal{E}) ~|~ x \geq 0 \}$, $\mathcal{P}_c^0 = \{ y \in L^q_{\mathcal{F}}(\mathcal{E}) ~|~ y \leq 0 \}$ and $\mathcal{D} = \{ y \in L^q_{\mathcal{F}}(\mathcal{E}) ~|~ E[y ~|~\mathcal{F}] = -1\}$. Then Proposition \ref{proposition3.4} below was proved in \cite{FKV12}.
\begin{proposition}\cite{FKV12}\label{proposition3.4}
A proper $\mathcal{T}_{c}$--lower semicontinuous $L^0$--convex function $f: L^p_{\mathcal{F}}(\mathcal{E}) \to \bar{L}^0(\mathcal{F})$ satisfies (1) of Def. \ref{definition3.3} iff $Dom(f^*) \subset \mathcal{P}_c^0$. $f$ satisfies (2) of Def. \ref{definition3.3} iff $Dom(f^*) \subset \mathcal{D}$. Finally, $f$ satisfies (1) and (2) of Def. \ref{definition3.3} iff $Dom(f^*) \subset \mathcal{P}_c^0  \cap \mathcal{D}$, at which time $f(x) = \bigvee \{ E[xy~|~\mathcal{F}] - f^*(y) ~|~ y \in \mathcal{P}_c^0 \cap  \mathcal{D} \}$ for any $x \in L^p_{\mathcal{F}}(\mathcal{E})$.
\end{proposition}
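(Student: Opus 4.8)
\noindent\textit{Proof proposal.} The plan is to pass to the random conjugate space of $L^p_{\mathcal{F}}(\mathcal{E})$ and run the argument through Fenchel--Moreau duality. Since $L^p_{\mathcal{F}}(\mathcal{E})$ is an $RN$ module, Theorem \ref{theorem2.14} gives $E^*_{\varepsilon,\lambda}=E^*_c=:E^*$, and Proposition \ref{proposition3.1} identifies $E^*$ isometrically with $L^q_{\mathcal{F}}(\mathcal{E})$ via $T_y(x)=E[xy\,|\,\mathcal{F}]$, so the random conjugate of $f$ becomes $f^*:L^q_{\mathcal{F}}(\mathcal{E})\to\bar{L}^0(\mathcal{F})$, $f^*(y)=\bigvee\{E[xy\,|\,\mathcal{F}]-f(x):x\in L^p_{\mathcal{F}}(\mathcal{E})\}$; since $f$ is proper, $f^*$ never equals $-\infty$, and $Dom(f^*)\neq\emptyset$. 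Because $L^p_{\mathcal{F}}(\mathcal{E})$ is $\sigma$--stable and $f$ is local (as is automatic for a proper $\mathcal{T}_c$--lower semicontinuous $L^0$--convex function), Theorems \ref{theorem2.20} and \ref{theorem2.22} give $f=f^{**}$, and a routine $\sigma$--stable patching argument (concatenating a candidate with a fixed element of $Dom(f^*)$ on the set where its conjugate is $+\infty$, using locality of $f^*$) lets us restrict the supremum to the domain: $f(x)=\bigvee\{E[xy\,|\,\mathcal{F}]-f^*(y):y\in Dom(f^*)\}$ for every $x\in L^p_{\mathcal{F}}(\mathcal{E})$. These are the working tools.

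For the first equivalence, suppose $f$ is monotone and fix $y_0\in Dom(f^*)$ and $x_0\in Dom(f)$. For $h\in L^p_{\mathcal{F}}(\mathcal{E})$ with $h\geq 0$ and $n\in N$ one has $x_0+nh\geq x_0$, hence $f(x_0+nh)\leq f(x_0)<+\infty$, so
\[
f^*(y_0)\ \geq\ E[(x_0+nh)y_0\,|\,\mathcal{F}]-f(x_0+nh)\ \geq\ E[x_0y_0\,|\,\mathcal{F}]+nE[hy_0\,|\,\mathcal{F}]-f(x_0).
\]
As $f^*(y_0)$ is a fixed element of $L^0(\mathcal{F})$, letting $n\to\infty$ forces $E[hy_0\,|\,\mathcal{F}]\leq 0$; taking $h=\tilde{I}_{(y_0>0)}\in L^p_{\mathcal{F}}(\mathcal{E})$ gives $E[y_0^+\,|\,\mathcal{F}]\leq 0$, whence $y_0^+=0$ and $y_0\in\mathcal{P}_c^0$. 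Conversely, if $Dom(f^*)\subset\mathcal{P}_c^0$ and $x_1\geq x_2$, then $(x_1-x_2)y\leq 0$, so $E[x_1y\,|\,\mathcal{F}]\leq E[x_2y\,|\,\mathcal{F}]$ for every $y\in Dom(f^*)$; subtracting $f^*(y)$ and taking $\bigvee$ over $Dom(f^*)$ yields $f(x_1)\leq f(x_2)$.

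The second equivalence is analogous. If $f(x+m)=f(x)-m$ for all $m\in L^0(\mathcal{F})$, then for $y_0\in Dom(f^*)$ and $x_0\in Dom(f)$,
\[
f^*(y_0)\ \geq\ E[(x_0+m)y_0\,|\,\mathcal{F}]-f(x_0+m)\ =\ E[x_0y_0\,|\,\mathcal{F}]-f(x_0)+m\bigl(E[y_0\,|\,\mathcal{F}]+1\bigr),
\]
and running $m$ through $\pm n$, $n\to\infty$, forces $E[y_0\,|\,\mathcal{F}]+1=0$, i.e. $Dom(f^*)\subset\mathcal{D}$. Conversely, if $Dom(f^*)\subset\mathcal{D}$, then $E[(x+m)y\,|\,\mathcal{F}]=E[xy\,|\,\mathcal{F}]-m$ for every $y\in Dom(f^*)$, so taking $\bigvee$ over $Dom(f^*)$ gives $f(x+m)=f(x)-m$. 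Combining, $f$ satisfies (1) and (2) iff $Dom(f^*)\subset\mathcal{P}_c^0\cap\mathcal{D}$. For such $f$ the Young--type inequality $f(x)\geq E[xy\,|\,\mathcal{F}]-f^*(y)$ holds for all $y\in L^q_{\mathcal{F}}(\mathcal{E})$, giving $\bigvee\{E[xy\,|\,\mathcal{F}]-f^*(y):y\in\mathcal{P}_c^0\cap\mathcal{D}\}\leq f(x)$, while $Dom(f^*)\subset\mathcal{P}_c^0\cap\mathcal{D}$ together with the domain--restricted representation gives the reverse inequality, which is the asserted formula.

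I expect the only genuinely delicate point to be the preliminary step: establishing $f=f^{**}$ and, above all, that the supremum defining $f^{**}$ may be taken over $Dom(f^*)$ rather than all of $E^*$ (and that $Dom(f^*)\neq\emptyset$ to begin with). This is where the $\sigma$--stability of the module and the locality of $f$ and $f^*$ are essential, with no classical counterpart; the countable partition used in the patching has to be chosen so that locality of $f^*$ keeps the concatenated conjugate finite on every piece. Once this is in place, the four implications are routine manipulations of generalized conditional expectations, the recurring device being to test a fixed $y_0\in Dom(f^*)$ against the scalars $nh$ or $\pm n$ and exploit the finiteness of $f^*(y_0)$.
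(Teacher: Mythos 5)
The paper gives no proof of Proposition \ref{proposition3.4} (it is quoted from \cite{FKV12}), and your argument is correct and follows essentially the same standard duality route as that source: identify $(L^p_{\mathcal{F}}(\mathcal{E}))^*$ with $L^q_{\mathcal{F}}(\mathcal{E})$ via Proposition \ref{proposition3.1}, apply the random Fenchel--Moreau theorem ($f^{**}=f$, Corollary \ref{corollary2.23} applies directly since $L^p_{\mathcal{F}}(\mathcal{E})$ is $\sigma$--stable), and test a fixed $y_0\in Dom(f^*)$ against $x_0+nh$ with $h\geq 0$ (in particular $h=\tilde{I}_{(y_0>0)}$) and against $x_0\pm n$. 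The one step you leave as a sketch --- that $Dom(f^*)\neq\emptyset$ and that the biconjugate supremum may be restricted to $Dom(f^*)$ --- is indeed the only delicate point, and your proposed mechanism (locality of $f^*$ plus $\sigma$--stable concatenation of an arbitrary $y$ with a fixed element of $Dom(f^*)$ on the set where $f^*(y)=+\infty$, together with a countable exhaustion argument for nonemptiness) is precisely how it is handled in this literature, so I regard the proposal as complete in substance.
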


\begin{example}\label{example3.5}
$\rho_\gamma : L^p_{\mathcal{F}}(\mathcal{E}) \to \bar{L}^0(\mathcal{F})$, defined by $\rho_\gamma (x) = \frac{1}{\gamma} \ln E[e^{-\gamma x}~|~\mathcal{F}]$ for any $x \in L^p_{\mathcal{F}}(\mathcal{E})$, is a conditional convex risk measure, called conditional entropic risk measure with a risk aversion index  $\gamma >0$.
\end{example}

\begin{example}\cite{MMRT09,FKV12}\label{example3.6}
Let $U_{\beta} : L^2_{\mathcal{F}}(\mathcal{E}) \to L^0(\mathcal{F})$ be defined by $U_\beta(x) = - E[x~|~\mathcal{F}] + \frac{\beta}{2}D[x~|~\mathcal{F}]$ for any $x \in L^2_{\mathcal{F}}(\mathcal{E})$, called the conditional mean--variance risk measure with a risk aversion index $\beta >0$, where $D[x~|~\mathcal{F}]$ denotes the conditional variance of $x$ with respect to $\mathcal{F}$, namely $D[x~|~\mathcal{F}] = E[(x - E[x~|~\mathcal{F}])^2~|~\mathcal{F}] = E[x^2~|~\mathcal{F}] - (E[x~|~\mathcal{F}])^2$. It is clear that $U_\beta$ is a $\mathcal{T}_{c}$--continuous $L^0$--convex function satisfying (2) but not (1) of Def. \ref{definition3.3}, and hence not a conditional convex risk measure. $V_{\beta} : L^2_{\mathcal{F}}(\mathcal{E}) \to L^0(\mathcal{F})$, which is defined by $V_\beta(x) = \bigvee \{ E[xy~|~\mathcal{F}]- U^*_\beta(y)~|~y \in \mathcal{P}_c^0 \cap  \mathcal{D} \}$ for any $x \in L^2_{\mathcal{F}}(\mathcal{E})$, is a conditional convex risk measure, called the monotone conditional mean--variance risk measure.
\end{example}

\begin{remark}\label{remark3.7}
Filipovi\'{c} et.al proved in \cite{FKV12} that $U_\beta$ is $L^0$--subdifferentiable on $L^2_{\mathcal{F}}(\mathcal{E})$, $Dom(U^*_\beta) = \mathcal{D}$ and $U^*_\beta(y) = \frac{1}{2\beta} E[(1+y)^2~|~\mathcal{F}]$ for any $y \in \mathcal{D}$. In fact, it is obvious that $U_\beta$ is $L^0$--Gateaux differentiable and $U'_\beta(x) = \beta (x - E[x~|~\mathcal{F}]) -1$ for any $x \in L^2_{\mathcal{F}}(\mathcal{E})$. $G_\beta = \{ x \in L^2_{\mathcal{F}}(\mathcal{E})~|~U'_\beta(x)  \leq 0 \} = \{  x \in L^2_{\mathcal{F}}(\mathcal{E})~|~x-E[x~|~\mathcal{F}] \leq \frac{1}{\beta} \}$ is called the domain of monotonicity of $U_\beta$. When $\mathcal{F}$ is trivial, namely $\mathcal{F} = \{ \Omega, \emptyset \}$, $V_\beta$ was introduced and thoroughly studied in \cite{MMRT09}, for a general $\mathcal{F}, V_\beta$ was first introduced and studied in \cite{FKV12}.
\end{remark}

To further study conditional convex risk measures, it is convenient to introduce the following terminologies:
\begin{definition}\cite{GZWW20,GZWYYZ17}
Let $(E,\mathcal{P})$ be an $RLC$ module over $R$ with base $(\Omega,\mathcal{F},P)$ and $V$ a nonempty $L^0$--convex subset of $E$. A proper $L^0$--convex function $f: V\to \bar{L}^0(\mathcal{F})$ is said to be random strictly convex (or, strictly $L^0$--convex) if $f(\xi x+(1-\xi)y)<\xi f(x)+(1-\xi)f(y)$ on $(0<\xi<1)\cap(x\neq y)$ for any $x,y\in V$ and $\xi \in L^0_+(\mathcal{F})$ with $0\leq\xi\leq 1$, where $(x\neq y)=(\bigvee\{\|x-y\| : \|\cdot\|\in\mathcal{P}\}>0)$. Here, we make the convention: for any $\xi$ and $\eta$ in $\bar{L}^0(\mathcal{F})$, arbitarily choosing their respective representatives $\xi^0$ and $\eta^0$, the set $\{ \omega \in \Omega : \xi^0(\omega) > \eta^0(\omega) \}$ is denoted by $(\xi > \eta)$, which would not produce any confusion, since $\{ \omega \in \Omega : \xi^0(\omega) > \eta^0(\omega) \}$ only differs by a null set for different choices of $\xi^0$ and $\eta^0$. If $(E,\mathcal{P})$ is an $RN$ module $(E,\|\cdot\|)$, a proper $L^0$--convex function $f : V\to \bar{L}^0(\mathcal{F})$ is said to be random coercive on $V$ if $\{f(x_n)~|~ n\in N\}$ converges a.s. to $+\infty$ on $A$ for any sequence $\{x_n ~|~ n\in N\}$ in $V$ and any $A\in \mathcal{F}$ with $P(A)>0$ such that $\{\|x_n\| ~|~ n\in N\}$ converges a.s. to $+\infty$ on $A$.
\end{definition}

Similarly to the discussions of static risk measures \cite[Section 5]{CL08}, for any conditional convex risk measure $f: L^p_{\mathcal{F}}(\mathcal{E}) \to \bar{L}^0(\mathcal{F})$, $\xi \in L^0_{+}(\mathcal{F})$ with $0 \leq \xi \leq 1$ and any $x,y \in L^p_{\mathcal{F}}(\mathcal{E})$ such $x-y = m \in L^0(\mathcal{F})$, it always holds that $f(\xi x+(1-\xi)y)=\xi f(x)+(1-\xi)f(y)$. Thus all conditional convex risk measures are not strictly $L^0$--convex. In the final part of this section, we illustrate that neither of the two conditional convex risk measures--monotone conditional mean--variance and conditional entropic risk measure is random coercive on the standard Markowitz--type constraint set. Now, this section will be focused on the study of $V_\beta$ as follows.

\begin{lemma}\label{lemma3.9}
For any given $x \in L^2_{\mathcal{F}}(\mathcal{E})$ and $z \in L^0_{++}(\mathcal{F}) \cap L^1(\mathcal{F})$, there exists unique one $k_x \in L^0(\mathcal{F})$ such that $E[(k_x-x)^+~|~\mathcal{F}] = z$.
\end{lemma}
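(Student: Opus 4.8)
The plan is to fix $x \in L^2_{\mathcal{F}}(\mathcal{E})$ and $z \in L^0_{++}(\mathcal{F}) \cap L^1(\mathcal{F})$ and to study the function $\varphi_x : L^0(\mathcal{F}) \to \bar{L}^0_+(\mathcal{F})$ defined by $\varphi_x(k) = E[(k-x)^+ ~|~ \mathcal{F}]$, showing it is a strictly increasing, continuous, $\sigma$-stable bijection from (a suitable subset of) $L^0(\mathcal{F})$ onto $L^0_+(\mathcal{F})$. First I would record the elementary monotonicity: if $k_1 \le k_2$ in $L^0(\mathcal{F})$ then $(k_1 - x)^+ \le (k_2 - x)^+$ pointwise, hence $\varphi_x(k_1) \le \varphi_x(k_2)$; moreover on the set $(k_1 < k_2)$ one has $(k_2-x)^+ - (k_1-x)^+ > 0$ on a set of positive conditional probability inside $(k_1<k_2)$ whenever things are arranged so that $k_2 > x$ with positive $\mathcal{F}$-conditional probability there. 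The key quantitative observation is the identity (or inequality) $\varphi_x(k + c) - \varphi_x(k) \ge c \cdot P(k \ge x ~|~ \mathcal{F})$ for $c \in L^0_{++}(\mathcal{F})$, together with $\varphi_x(k+c) - \varphi_x(k) \le c$; from the upper bound one gets Lipschitz continuity (hence $\mathcal{T}_{\varepsilon,\lambda}$-continuity) of $\varphi_x$, and from the lower bound one gets that $\varphi_x$ is eventually strictly increasing as $k \uparrow \infty$.

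Next I would establish the range. As $k \to +\infty$ (say along $k = n$, $n \in \mathbb{N}$), monotone convergence for the generalized conditional expectation gives $\varphi_x(n) \uparrow E[\lim_n (n-x)^+ ~|~\mathcal{F}] = E[+\infty \cdot \tilde{I}_\Omega ~|~ \mathcal{F}]$; more carefully, $(n-x)^+ \ge n - |x| \to +\infty$ a.s., so $\varphi_x(n) \ge n - E[|x| ~|~ \mathcal{F}] \to +\infty$ a.s. on $\Omega$. Hence $\bigvee_n \varphi_x(n) = +\infty$. Symmetrically, as $k \to -\infty$ we have $(k-x)^+ = (k-x)^+ \to 0$ a.s. (since eventually $k < x$ a.s. on any fixed-probability set), and dominated convergence by $(0-x)^+ \le |x| \in L^1_{\mathcal{F}}(\mathcal{E})$ gives $\varphi_x(-n) \downarrow 0$. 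So the range of $\varphi_x$ is "cofinal and coinitial" in $L^0_+(\mathcal{F})$. The main work is then to pass from these scalar-sequence limits to the conclusion that for the given $z \in L^0_{++}(\mathcal{F}) \cap L^1(\mathcal{F})$ there is a genuine $k_x \in L^0(\mathcal{F})$ (an equivalence class, not a scalar) with $\varphi_x(k_x) = z$. The natural device is to set $k_x = \bigwedge\{ k \in L^0(\mathcal{F}) : \varphi_x(k) \ge z \}$, using that this set is nonempty (by the $k\to+\infty$ analysis, applied on a countable partition if necessary), $\sigma$-stable, and directed downwards, so Proposition \ref{proposition2.1} provides a nonincreasing minimizing sequence whose infimum is the candidate $k_x$; continuity of $\varphi_x$ then forces $\varphi_x(k_x) = z$. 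One must also check $k_x$ is finite-valued a.s., which follows from the $k \to -\infty$ estimate $\varphi_x(-n) \downarrow 0 < z$ on $\Omega$, so $k_x > -\infty$, and from $\varphi_x(n) \to +\infty \ge z$, so $k_x < +\infty$; a localization/pasting argument over the sets $(n-1 \le k_x < n)$ handles the countable-concatenation bookkeeping.

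For uniqueness, suppose $\varphi_x(k) = \varphi_x(k') = z$ with $k \ne k'$, i.e.\ $P(k \ne k') > 0$; by symmetry assume $A := (k < k') \in \mathcal{F}$ has positive probability. On $A$ we have $0 = \tilde I_A(\varphi_x(k') - \varphi_x(k)) = \tilde I_A E[(k'-x)^+ - (k-x)^+ ~|~ \mathcal{F}]$, and the integrand $(k'-x)^+ - (k-x)^+$ is nonnegative, so it must vanish a.s.\ on $A$, which forces $x \ge k'$ a.s.\ on $A$ (otherwise on $A \cap (x < k')$ the difference is strictly positive). But then on $A$, $z = E[(k'-x)^+ ~|~ \mathcal{F}] = 0$ a.s.\ on $A$ — wait, more precisely $E[(k'-x)^+\tilde I_A | \mathcal F]$ need not be computed that way; instead, from $x \ge k'$ a.s.\ on $A$ and $k < k'$ on $A$ we get $(k-x)^+ = 0$ a.s.\ on $A$, hence $z = \tilde I_A \varphi_x(k) = 0$ on $A$, contradicting $z \in L^0_{++}(\mathcal{F})$. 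Hence $k = k'$.

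\textbf{Main obstacle.} The delicate point is not any single estimate but the transition from "for each scalar $n$, $\varphi_x(n)$ is large" to "there exists a single measurable $k_x$ solving the equation on all of $\Omega$ simultaneously" — i.e.\ constructing $k_x$ as an honest element of $L^0(\mathcal{F})$ rather than piecing it together. I expect this to require the lattice-theoretic machinery of Proposition \ref{proposition2.1}(2) (a directed-downwards family has a nonincreasing minimizing sequence) together with a careful $\sigma$-stability argument, plus the Lipschitz continuity $|\varphi_x(k) - \varphi_x(k')| \le |k - k'|$ to guarantee the infimum is actually attained as an exact solution and not merely as a supremum/infimum that could overshoot.
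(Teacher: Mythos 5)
Your argument is correct, but it is organized differently from the paper's. For existence you and the paper define the same candidate, $k_x=\bigwedge\{\bar y\in L^0(\mathcal{F}): E[(\bar y-x)^+~|~\mathcal{F}]\geq z\}$, but the paper then simply invokes Lemma 5.6 of \cite{FKV12} to get $E[(k_x-x)^+~|~\mathcal{F}]=z$, whereas you propose to reprove that fact from scratch via monotonicity, the one--sided Lipschitz bounds, the limits of $\varphi_x(k)$ as $k\to\pm\infty$, and Proposition \ref{proposition2.1}; this is more self-contained, and the only compressed step is the exactness: continuity along the nonincreasing minimizing sequence only yields $\varphi_x(k_x)\geq z$, and equality needs the downward perturbation $k_x-(\varphi_x(k_x)-z)\tilde{I}_A$ on $A=(\varphi_x(k_x)>z)$ together with locality of $\varphi_x$ and the estimate $\varphi_x(k)-\varphi_x(k-\varepsilon)\leq\varepsilon$, i.e. precisely the Lipschitz bound you flag (locality also shows the set $\{k:\varphi_x(k)\geq z\}$ is closed under $\wedge$, hence directed downwards, as Proposition \ref{proposition2.1}(2) requires). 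For uniqueness the routes genuinely differ: the paper uses that any other solution $\tilde y$ dominates the infimum $k_x$, passes to total expectations $E[(\tilde y-x)^+]=E[z]=E[(k_x-x)^+]$ (this is where $z\in L^1(\mathcal{F})$ enters) to conclude $(\tilde y-x)^+=(k_x-x)^+$, and then produces $B\subset(\tilde y>k_x)$ with $k_x>x$ on $B$ to reach a contradiction; your argument is purely conditional and local, deducing on $A=(k<k')$ that the nonnegative integrand with vanishing conditional expectation is zero, hence $x\geq k'$ a.s. on $A$ and $\tilde{I}_Az=0$, so it uses neither the infimum characterization of $k_x$ nor the integrability of $z$, and would give uniqueness for any $z\in L^0_{++}(\mathcal{F})$.
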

\begin{proof}
Let $k_x = \bigwedge \{ \bar{y} \in L^0(\mathcal{F})~|~E[(\bar{y}-x)^+~|~\mathcal{F}] \geq z \}$, then it follows from \cite[Lemma 5.6]{FKV12} that $E[(k_x-x)^+~|~\mathcal{F}] = z$, we will prove the uniqueness of solutions $\hat{y}$ of the equation $E[(\hat{y}-x)^+~|~\mathcal{F}] = z$ as follows.

Assume that $\tilde{y}$ is another solution of the equation, then it is obvious that $\tilde{y} \geq k_x$. Let $A = (\tilde{y} > k_x)$ and $P(A) >0$. Since $(\tilde{y}-x)^+ \geq (k_x-x)^+$ and $E[(\tilde{y}-x)^+] = E[z] = E[(k_x-x)^+]$, which implies $(\tilde{y}-x)^+ = (k_x-x)^+$. Since it is impossible that $k_x-x \leq 0$ on $A$ (otherwise, $0 = \int_A (k_x-x)^+dP = \int_A E[(k_x-x)^+~|~\mathcal{F}]dP = \int_A z dP$, which contradicts the fact that $z \in L^0_{++}(\mathcal{F}) \cap L^1(\mathcal{F})$), there exists some $B \in \mathcal{E}$ and $ B \subset A$ such that $P(B) >0$ and $(k_x-x)>0$ on $B$, then $\tilde{y} - x = k_x-x$ on $B$, which again contradicts $\tilde{y} > k_x$ on $A$.
\end{proof}

Since $\frac{1}{\beta} \in L^0_{++}(\mathcal{F}) \cap L^1(\mathcal{F})$, there exists unique one $k_x \in L^0(\mathcal{F})$ for each $x \in L^2_{\mathcal{F}}(\mathcal{E})$ such that $E[(k_x-x)^+~|~\mathcal{F}] = \frac{1}{\beta}$. Observe that $(k_x - x)^+ = k_x -k_x\wedge x$, $E[k_x\wedge x~|~\mathcal{F}] = k_x -\frac{1}{\beta}$, and it is also clear that $k_x\wedge x \in G_\beta$ for any $x \in L^2_{\mathcal{F}}(\mathcal{E})$ and it is also easy to see that $k_x = E[x~|~\mathcal{F}] + \frac{1}{\beta}$ and $k_x \wedge x =x$ for any $x \in G_\beta$.

From now on, $k_x$ always stands for the unique solution of $E[(k-x)^+~|~\mathcal{F}] = \frac{1}{\beta}$ for any given $\beta$ and $x \in L^2_{\mathcal{F}}(\mathcal{E})$.

\begin{theorem}\label{theorem3.10}
$V_\beta (x) = U_\beta (x \wedge k_x) = - E[x \wedge k_x~|~\mathcal{F}] + \frac{\beta}{2}D[x \wedge k_x~|~\mathcal{F}]$ for any $x \in L^2_{\mathcal{F}}(\mathcal{E})$. Further, $V_\beta$ is $L^0$--G$\hat{a}$teaux differentiable and $\mathcal{T}_c$--continuous on $L^2_{\mathcal{F}}(\mathcal{E})$ and $V'_\beta(x) = - \beta (k_x -x)^+$ for any $x \in L^2_{\mathcal{F}}(\mathcal{E})$.
\end{theorem}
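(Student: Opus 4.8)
The plan is to prove the structural identity $V_\beta(x)=U_\beta(x\wedge k_x)$ by a two-sided inequality, deduce the explicit formula, and then read off $\mathcal{T}_c$-continuity and $L^0$-G\^{a}teaux differentiability from the subdifferential. All conjugates below are taken with respect to the self-duality $\langle x,y\rangle=E[xy~|~\mathcal{F}]$ on $L^2_{\mathcal{F}}(\mathcal{E})$ afforded by Proposition \ref{proposition3.1}, and $E^*_c$ is identified with $L^2_{\mathcal{F}}(\mathcal{E})$ through the isometry $T$ of that proposition.

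\emph{Setup.} By Remark \ref{remark3.7} and Theorem \ref{theorem2.31}, $U_\beta$ is $L^0$--G\^{a}teaux differentiable with $U'_\beta(z)=\beta(z-E[z~|~\mathcal{F}])-1$ and $\partial U_\beta(z)=\{U'_\beta(z)\}$; since $U_\beta$ is $\mathcal{T}_c$--continuous, proper and $L^0$--convex on the $\sigma$--stable module $L^2_{\mathcal{F}}(\mathcal{E})$, Corollary \ref{corollary2.23} gives $U^{**}_\beta=U_\beta$ and the Fenchel--Young equality $U^*_\beta(U'_\beta(z))=E[z\,U'_\beta(z)~|~\mathcal{F}]-U_\beta(z)$ for every $z$. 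From $E[(k_x-x)^+~|~\mathcal{F}]=\tfrac1\beta$ and $(k_x-x)^+=k_x-x\wedge k_x$ we get $E[x\wedge k_x~|~\mathcal{F}]=k_x-\tfrac1\beta$. Put $y_x:=-\beta(k_x-x)^+$. Then $y_x\leq 0$ and $E[y_x~|~\mathcal{F}]=-1$, so $y_x\in\mathcal{P}_c^0\cap\mathcal{D}$, and a short computation using $E[x\wedge k_x~|~\mathcal{F}]=k_x-\tfrac1\beta$ gives $U'_\beta(x\wedge k_x)=\beta\big((x\wedge k_x)-k_x\big)=-\beta(k_x-x)^+=y_x$.

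\emph{The identity.} For ``$\leq$'': if $y\in\mathcal{P}_c^0\cap\mathcal{D}$ then $y\leq 0$ and $x\geq x\wedge k_x$ force $xy\leq(x\wedge k_x)y$, hence $E[xy~|~\mathcal{F}]-U^*_\beta(y)\leq E[(x\wedge k_x)y~|~\mathcal{F}]-U^*_\beta(y)\leq U^{**}_\beta(x\wedge k_x)=U_\beta(x\wedge k_x)$ (using $\mathcal{P}_c^0\cap\mathcal{D}\subset\mathcal{D}=Dom(U^*_\beta)$); taking the supremum over $y$ yields $V_\beta(x)\leq U_\beta(x\wedge k_x)$, and enlarging the supremum to all of $\mathcal{D}$ gives $V_\beta(x)\leq U_\beta(x)$, so $V_\beta$ is finite valued with $Dom(V_\beta)=L^2_{\mathcal{F}}(\mathcal{E})$. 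For ``$\geq$'': since $y_x$ vanishes on $(x\geq k_x)$ and $x\wedge k_x=x$ on $(x<k_x)$, we have $xy_x=(x\wedge k_x)y_x$, so by the Fenchel--Young equality for $U_\beta$ at $x\wedge k_x$, $E[xy_x~|~\mathcal{F}]-U^*_\beta(y_x)=E[(x\wedge k_x)y_x~|~\mathcal{F}]-U^*_\beta(y_x)=U_\beta(x\wedge k_x)$, and since $y_x$ is admissible in the supremum defining $V_\beta(x)$ this gives $V_\beta(x)\geq U_\beta(x\wedge k_x)$. Hence $V_\beta(x)=U_\beta(x\wedge k_x)$, and substituting $E[x\wedge k_x~|~\mathcal{F}]=k_x-\tfrac1\beta$ into $U_\beta$ produces the displayed formula.

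\emph{Continuity and differentiability.} Being $\sigma$--stable and $\mathcal{T}_{\varepsilon,\lambda}$--complete, $L^2_{\mathcal{F}}(\mathcal{E})$ is $\mathcal{T}_c$--complete by Theorem \ref{theorem2.15}, hence $L^0$--pre--barreled by Corollary \ref{corollary2.28}; regarded as an $RLC$ module with the $\sigma$--stable family $\{\tilde{I}_A|||\cdot|||_2:A\in\mathcal{F}\}$ of $L^0$--seminorms (which has the countable concatenation property and induces the same $\mathcal{T}_{\varepsilon,\lambda}$ and $\mathcal{T}_c$), Theorem \ref{theorem2.29} shows the proper $\mathcal{T}_c$--lower semicontinuous $L^0$--convex function $V_\beta$, whose domain is everything, is $\mathcal{T}_c$--continuous on $L^2_{\mathcal{F}}(\mathcal{E})$. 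Next, $y_x\in\partial V_\beta(x)$: for every $h$, admissibility of $y_x$ gives $V_\beta(h)\geq E[hy_x~|~\mathcal{F}]-U^*_\beta(y_x)$ while the ``$\geq$'' step gives $V_\beta(x)=E[xy_x~|~\mathcal{F}]-U^*_\beta(y_x)$, so $V_\beta(h)-V_\beta(x)\geq E[(h-x)y_x~|~\mathcal{F}]$. For uniqueness, let $y'\in\partial V_\beta(x)$; the cash invariance $V_\beta(x+m)=V_\beta(x)-m$ ($m\in L^0(\mathcal{F})$) forces $E[y'~|~\mathcal{F}]=-1$ and the monotonicity $V_\beta(x+u)\leq V_\beta(x)$ ($u\geq 0$) forces $y'\leq 0$, so $y'\in\mathcal{P}_c^0\cap\mathcal{D}$; then $V_\beta\leq U_\beta$ gives $V^*_\beta\geq U^*_\beta$ while admissibility of $y'$ gives $V^*_\beta(y')\leq U^*_\beta(y')$, so $V^*_\beta(y')=U^*_\beta(y')$, and the Fenchel--Young equality for $V_\beta$ at $x$ (valid as $V_\beta(x)$ is finite) yields $E[xy'~|~\mathcal{F}]-U^*_\beta(y')=V_\beta(x)$, i.e. $y'$ maximizes $y\mapsto E[xy~|~\mathcal{F}]-\tfrac1{2\beta}E[(1+y)^2~|~\mathcal{F}]$ over the $L^0$--convex set $\mathcal{P}_c^0\cap\mathcal{D}$. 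Since $y\mapsto E[(1+y)^2~|~\mathcal{F}]$ is strictly $L^0$--convex --- indeed $\tfrac12 E[(1+y_1)^2~|~\mathcal{F}]+\tfrac12 E[(1+y_2)^2~|~\mathcal{F}]-E[(1+\tfrac{y_1+y_2}{2})^2~|~\mathcal{F}]=\tfrac14 E[(y_1-y_2)^2~|~\mathcal{F}]$, which is positive exactly on $(y_1\neq y_2)$ --- such a strictly $L^0$--concave functional has at most one maximizer over an $L^0$--convex set (a midpoint argument), so $y'=y_x$. Thus $\partial V_\beta(x)=\{y_x\}$, and Theorem \ref{theorem2.31} (continuity at $x$ plus a single subgradient) gives that $V_\beta$ is $L^0$--G\^{a}teaux differentiable at $x$ with $V'_\beta(x)=y_x=-\beta(k_x-x)^+$.

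\emph{Main obstacle.} The delicate point is assembling the pieces correctly: verifying that $-\beta(k_x-x)^+$ is simultaneously admissible ($\in\mathcal{P}_c^0\cap\mathcal{D}$) and equal to $U'_\beta(x\wedge k_x)$, which rests entirely on the identity $E[x\wedge k_x~|~\mathcal{F}]=k_x-\tfrac1\beta$ coming from Lemma \ref{lemma3.9}, and --- for uniqueness of the subgradient --- establishing $V^*_\beta=U^*_\beta$ on $\mathcal{P}_c^0\cap\mathcal{D}$ together with the strict $L^0$--convexity of $y\mapsto E[(1+y)^2~|~\mathcal{F}]$ in the conditional (a.s.) sense, i.e. checking the exceptional set coincides with the module norm set $(E[(y_1-y_2)^2~|~\mathcal{F}]>0)$. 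The remaining steps --- the two inequalities, the cancellation $xy_x=(x\wedge k_x)y_x$, and the identification of $E^*_c$ with $L^2_{\mathcal{F}}(\mathcal{E})$ via Proposition \ref{proposition3.1} --- are routine.
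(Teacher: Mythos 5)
Your proof is correct, and while it rests on the same dual representation $V_\beta(x)=\bigvee\{E[xy~|~\mathcal{F}]-\frac{1}{2\beta}E[(1+y)^2~|~\mathcal{F}]:y\in\mathcal{P}_c^0\cap\mathcal{D}\}$ and the same strict $L^0$--concavity argument as the paper, the route you take through it differs in two useful ways. The paper imports from Filipovi\'{c}--Kupper--Vogelpoth (Lemma 5.5 and Theorem 5.7 of \cite{FKV12}) the facts that $-\beta(k_x-x)^+$ is an $L^0$--subgradient of $V_\beta$ at $x$ and maximizes the concave integrand over $\mathcal{P}_c^0\cap\mathcal{D}$, and then obtains the formula $V_\beta(x)=U_\beta(x\wedge k_x)$ by an explicit chain of algebraic manipulations of conditional moments; you instead get the identity self-containedly by a two-sided inequality, observing that $y_x=-\beta(k_x-x)^+$ equals $U'_\beta(x\wedge k_x)$, invoking the Fenchel--Young equality for $U_\beta$ at $x\wedge k_x$ together with the pointwise cancellation $xy_x=(x\wedge k_x)y_x$ for the lower bound, and $xy\leq(x\wedge k_x)y$ for $y\leq 0$ plus $U^{**}_\beta=U_\beta$ for the upper bound, which also yields $Dom(V_\beta)=L^2_{\mathcal{F}}(\mathcal{E})$ en route rather than assuming it. Second, where the paper passes from uniqueness of the maximizer of the strictly $L^0$--concave $g$ directly to $\partial V_\beta(x)=\{-\beta(k_x-x)^+\}$, you supply the tacit intermediate step: any $y'\in\partial V_\beta(x)$ lies in $\mathcal{P}_c^0\cap\mathcal{D}$ by cash invariance and monotonicity, satisfies $V^*_\beta(y')=U^*_\beta(y')$, and hence by Fenchel--Young is itself a maximizer of $g$, so the strict concavity argument applies to it. This makes your treatment of the singleton subdifferential (and thus of $L^0$--G\^{a}teaux differentiability via Theorem \ref{theorem2.31}) more complete than the paper's, at the cost of a slightly longer argument; your detour through the seminorm family $\{\tilde{I}_A|||\cdot|||_2:A\in\mathcal{F}\}$ is harmless but unnecessary, since the single-norm family $\{|||\cdot|||_2\}$ already has the countable concatenation property trivially.
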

\begin{proof}
Since $Dom(V_\beta) = L^2_{\mathcal{F}}(\mathcal{E})$ and $V_\beta$ is $\mathcal{T}_c$--lower semicontinuous, $V_\beta$ is $\mathcal{T}_c$--continuous (equivalently, $\mathcal{T}_{\varepsilon,\lambda}$--continuous, see \cite[Theorem 3.6]{GZWYYZ17}) and $L^0$--subdifferentiable by Theorems \ref{theorem2.29} and \ref{theorem2.30}. In fact, Theorem 5.7 of \cite{FKV12} already constructed an $L^0$--subgradient $-\beta(k_x -x)^+$ of $V_\beta$ at any $x \in L^2_{\mathcal{F}}(\mathcal{E})$ and Lemma 5.5 of \cite{FKV12} showed that $-\beta (k_x - x)^+$ maximizes the $L^0$--concave function $g:L^2_{\mathcal{F}}(\mathcal{E}) \rightarrow L^0(\mathcal{F})($ defined by $g(z)=E[xz|\mathcal{F}]-U^*_{\beta}(z)$ for each $z\in \mathcal{P}^0_c\cap \mathcal{D})$ over $\mathcal{P}^0_c\cap \mathcal{D}$. Thus
\begin{equation}\nonumber
\aligned
V_{\beta}(x)&=\bigvee\{E[xz|\mathcal{F}]-\frac{1}{2\beta} E[(1+z)^2|\mathcal{F}]: z\in \mathcal{P}^0_c\cap \mathcal{D}\}\\
&=E[x\cdot(-\beta(k_x-x)^+)|\mathcal{F}]- \frac{1}{2\beta} E[(1-\beta(k_x-x)^+)^2|\mathcal{F}]\\
&=-E[\beta(x-k_x)(k_x-x)^+|\mathcal{F}]- \frac{\beta}{2} E[((k_x-x)^+)^2|\mathcal{F}]+ \frac{1}{2\beta} -k_x\\
&=\beta E[((k_x-x)^+)^2|\mathcal{F}]-  \frac{\beta}{2} E[((k_x-x)^+)^2|\mathcal{F}]+ \frac{1}{2\beta} -k_x\\
&=\frac{\beta}{2} E[((k_x-x)^+)^2|\mathcal{F}]+ \frac{1}{2\beta} -k_x\\
&=\frac{\beta}{2} (E[((k_x-x)^+)^2|\mathcal{F}] - E[(k_x-x)^+|\mathcal{F}]^2) + \frac{\beta}{2}E[(k_x-x)^+|\mathcal{F}]^2+ \frac{1}{2\beta} -k_{x}\\
&=\frac{\beta}{2} D[(k_x-x)^+|\mathcal{F}]+ \frac{1}{2\beta} + \frac{1}{2\beta}-k_{x}\\
&=\frac{\beta}{2} D[k_x\wedge x|\mathcal{F}]- E[k_x\wedge x|\mathcal{F}].
\endaligned
\end{equation}
\par
Finally, since, for each fixed $x\in L^2_{\mathcal{F}}(\mathcal{E}), g$ defined as above is strictly $L^0$--concave on $\mathcal{P}^0_c\cap \mathcal{D}$, $-\beta(k_x-x)^+$, as a maximizer of $g$ over $\mathcal{P}^0_c\cap \mathcal{D}$, must be unique, namely $\partial V_{\beta}(x)=\{-\beta(k_x-x)^+\}$, then $V'_{\beta}(x)=-\beta(k_x-x)^+$.
\end{proof}

When $\mathcal{F}$ is trivial $($namely $\mathcal{F}=\{\Omega,\emptyset \})$, a conditional convex risk measure automatically reduces to a $($static$)$ convex risk measure. Example \ref{example3.11} below illustrates that entropic risk measure is not coercive on the standard Markowitz constraint set and Example \ref{example3.12} below shows that the same phenomenon also occurs for the monotone mean--variance risk measure.

\begin{example}\label{example3.11}
Let $\Omega=[0,1]$, $\mathcal{E}$ the $\sigma$--algebra of Lebesgue measurable subsets of $[0,1]$ and $P$ the Lebesgue measure on $\mathcal{E}$. For any given positive number $p$ such that $1<p<+\infty$, we can construct a sequence $\{x_n: x\in N\}$ in $L^p(\mathcal{E})$ such that
\begin{enumerate}[(1)]
\item $\pi(x_n)=E[x_n]=1$ for each $n\in N$, where $\pi(x)=E[x]=\int_{\Omega}xdP$ for any $x\in L^p(\mathcal{E})$, and hence $\pi$ can act as a pricing function;
\item $E(|x_n|^p)\rightarrow +\infty$ when $n$ tends to $+\infty$;
\item $\rho_{\gamma}(x_n)=\frac{1}{\gamma}\ln E[e^{-\gamma x_n}]\leq 1$ for each $n\in N$.
\end{enumerate}
\par
Define $A^-_n=(\frac{1}{2}, 1], A^+_n=((\frac{1}{2})^{n+1}, \frac{1}{2}], A^0_n=[0, (\frac{1}{2})^{n+1})$ and $x_n$ as follows for each $n\in N$;
$$ x_n(\omega)=\left\{
\begin{aligned}
0 & , & \omega\in A^0_n; \\
c_n \omega^{-\frac{1}{p}} & , & \omega\in A^+_n; \\
-1 & , & \omega\in A^{-}_n.
\end{aligned}
\right.
$$
\par
Here, $c_n$ is a normalized positive constant such that $E[x_n]=1$, then it is obvious that $c_n= \frac{\frac{3}{2}(1-\frac{1}{p})}{(\frac{1}{2})^{1-\frac{1}{p}}- (\frac{1}{2})^{(n+1)(1-\frac{1}{p})}}$. Clearly, $\{c_n: n\in N\}$ is a bounded sequence since $c_n\in [\frac{\frac{3}{2}(1-\frac{1}{p})}{(\frac{1}{2})^{(1-\frac{1}{p})}}, \frac{\frac{3}{2}(1-\frac{1}{p})}{(\frac{1}{2})^{(1-\frac{1}{p})}- (\frac{1}{4})^{(1-\frac{1}{p})}}]$ for each $n\in N$.
\par
It is easy to check that $\{x_n: n\in N\}$ is a sequence in $L^p(\mathcal{E})$, but $E[|x_n|^p]\geq E[I_{A^+_n}\cdot x^p_n]= c^p_n\int_{A^+_n} \frac{1}{\omega}d\omega=c^p_n\cdot n\cdot \ln2$, and hence $(2)$ is satisfied.
\par
Finally, $E[e^{-\gamma x_n}]= E[e^{-\gamma x_n}\cdot I_{A^+_n}]+ E[e^{-\gamma x_n}\cdot I_{A^0_n}]+ E[e^{-\gamma x_n}\cdot I_{A^-_n}]\leq E[1\cdot I_{A^+_n}]+ E[1\cdot I_{A^0_n}]+ E[e^{-\gamma x_n}\cdot I_{A^-_n}]\leq P(A^+_n)+ P(A^0_n)+ e^{\gamma}P(A^-_n)\leq e^{\gamma}$, and hence $(3)$ is also satisfied. $\square$
\end{example}
\par
First, let us again notice that $\{c_n: n\in N\}$ satisfies $\frac{3}{4}\sqrt{2}\leq c_n\leq \frac{3}{2}(\sqrt{2}+1)$ for any $n\in N$ when $p=2$. Example \ref{example3.12} below illustrates that for almost all positive numbers $\beta$ the monotone mean--variance risk measure $V_{\beta}: L^2(\mathcal{E})\rightarrow R$, defined by $V_{\beta}(x)=-E[k_x\wedge x]+ \frac{\beta}{2}D[k_x\wedge x]$ for any $x\in L^2(\mathcal{E})$, is not coercive on the constraint set $\{x\in L^2(\mathcal{E}) : \pi(x)=1$ and $E[x]=1\}$, where $(\Omega,\mathcal{E}, P)$ and $\pi: L^2(\mathcal{E})\rightarrow R$ are the same as in Example \ref{example3.11} $($when  $p=2)$, $k_x\in R$ is the unique solution of the equation $E[(k-x)^+]=\frac{1}{\beta}$ for each fixed $x\in L^2(\mathcal{E})$, $G_{\beta}=\{x\in L^2(\mathcal{E}): x-E[x]\leq \frac{1}{\beta}\}$ and $D[x]$ denotes  the variance of $x\in L^2(\mathcal{E})$.

\begin{example}\label{example3.12}
Let $p=2, (\Omega, \mathcal{E}, P)$ and the sequence $\{x_n: n\in N\}$ be the same as in Example \ref{example3.11}. Then, for the prescribed positive numbers $\beta$, $\{x_n: n\in N\}$  satisfies the following conditions:
\begin{enumerate}[(1)]
\item $\pi(x_n)=E[x_n]=1$ for each $n\in N$;
\item $E[|x_n|^2] \rightarrow +\infty$ when $n \rightarrow \infty$;
\item $\{V_{\beta}(x_n): n\in N\}$ is always bounded.
\end{enumerate}
Check. First, let us notice the known fact that from \cite{MMRT09}: $E[k_x\wedge x]=k_x-\frac{1}{\beta}$, further it is also obvious that $D[k_x\wedge x]=D[(k_x-x)^+]= E[((k_x-x)^+)^2]- \frac{1}{\beta^2}$. Besides, it is easy to see that $x_n \notin G_{\beta}$ for sufficiently large $n$, for simplicity we denote $k_{x_n}$ by $k_n$ for each $n\in N$. Now, we can illustrate the following three cases of $\beta$ for which $(1),(2)$ and $(3)$ hold.\\
Case 1: $\beta\geq 2$.
\par
A simple computation shows that it is impossible to find a positive number $k_n$ such that $E[(k_n-x_n)^+]= \frac{1}{\beta}$ for each $n\in N$. Let $k_n\leq 0$ be such that $E[(k_n-x_n)^+]= \frac{1}{\beta}$, then $\frac{(k_n+1)^+}{2}=\frac{1}{\beta}$, namely $\frac{k_n+1}{2}=\frac{1}{\beta}$, that is, $k_n=\frac{2}{\beta}-1$ for any $n\in N$. Then $V_{\beta}(x_n)= -E[k_n\wedge x_n]+ \frac{\beta}{2}D[k_n\wedge x_n]= \frac{1}{\beta}- k_n+ \frac{\beta}{2}(\frac{(k_n+1)^2}{2}- \frac{1}{\beta^2})= \frac{1}{\beta}- (\frac{2}{\beta}-1)+ \frac{\beta}{2}(2(\frac{k_n+1}{2})^2- \frac{1}{\beta^2})= 1- \frac{1}{2\beta}$.\\
Case 2: $\frac{4}{5}< \beta< 2$.
\par
It is very easy to see that $k_n$ must be positive for each $n\in N$. Let $k^*_n= \frac{\frac{1}{\beta}- \frac{1}{2}}{\frac{1}{2}+ (\frac{1}{2})^{n+1}}$ for each $n\in N$, then $k^*_n< \frac{2}{\beta}-1 < \frac{3}{2} \leq \sqrt{2}c_n$, and hence $(k^*_n- c_n\omega^{-\frac{1}{2}})^+=0$ for each $\omega\in ((\frac{1}{2})^{n+1}, \frac{1}{2}]$, from which it is easy to see that $E[(k^*_n-x_n)^+]= \frac{1}{\beta},$ namely $k_n= k^*_n$, and at this time $V_{\beta}(x_n)= \frac{1}{\beta}- k_n+ \frac{\beta}{2}(k^2_n\cdot(\frac{1}{2})^{n+1}+ \frac{(k_n+1)^2}{2}- \frac{1}{\beta^2})$, and hence $\{V_{\beta}(x_n): n\in N\}$ is bounded since $\{k_n: n\in N\}$ is bounded.\\
Case 3: $0< \beta< \frac{1}{\frac{3}{2}(\sqrt{5}-\sqrt{2})(\sqrt{2}+1)+ \frac{1}{2}    }(<\frac{4}{5})$.
\par
Since $\frac{1}{\beta}- \frac{1}{2}> \frac{3}{2}(\sqrt{5}-\sqrt{2})(\sqrt{2}+1)\geq (\sqrt{5}-\sqrt{2})c_n$, $\sqrt{2}c_n+ \frac{1}{\beta}- \frac{1}{2}> \sqrt{5}c_n\geq 2c_n\sqrt{1+\frac{1}{2^{n+1}}}$ for any $n\in N$. Let $k^*_n= \frac{a_n+ \sqrt{a^2_n- b^2_n}}{2(1+\frac{1}{2^{n+1}})   }$, where $a_n= \sqrt{2}c_n+ \frac{1}{\beta}- \frac{1}{2}$ and $b_n= 2c_n\sqrt{1+\frac{1}{2^{n+1}}}$. It is clear that $\{k^*_n: n\in N\}$ is bounded and $k^*_n\geq \sqrt{2}c_n$ for any $n\in N$, and hence $\frac{1}{2^{n+1}}\leq (\frac{c_n}{k^*_n})^2\leq \frac{1}{2}$ for sufficiently large $n$, then $E[(k^*_n-x_n)^+]= \frac{k^*_n+1}{2}+ k^*_n(\frac{1}{2})^{n+1}+ \int^{\frac{1}{2}}_{(\frac{c_n}{k^*_n})^2}(k^*_n-c_n\omega^{-\frac{1}{2}})d\omega= k^*_n(1+ \frac{1}{2^{n+1}}) -2c_n(\frac{1}{2})^{\frac{1}{2}}+ \frac{c^2_n}{k^*_n}+ \frac{1}{2},$ it is also very easy to check that $k^*_n$ is the unique solution of the equation: $k^2(1+\frac{1}{2^{n+1}})- (\sqrt{2}c_n+ \frac{1}{\beta}-  \frac{1}{2})k+ c^2_n=0$ such that $k\geq \sqrt{2}c_n,$ namely $E[(k^*_n-x_n)^+]=\frac{1}{\beta}$, that is to say, $k_n=k^*_n$ for sufficiently large $n$. Further, for such sufficiently large $n$, $V_{\beta}(x_n)= E[((k_n-x_n)^+)^2]- \frac{1}{\beta^2}+ \frac{1}{\beta}-k_n= \frac{(k_n+1)^2}{2}+ k^2_n\cdot (\frac{1}{2})^{n+1} + \int^{\frac{1}{2}}_{(\frac{c_n}{k_n})^2}(k_n-c_n\cdot \omega^{-\frac{1}{2}})^2d\omega + \frac{1}{\beta}-k_n\leq \frac{(k_n+1)^2}{2}+ k^2_n\cdot (\frac{1}{2})^{n+1}+  k^2_n\cdot (\frac{1}{2})+\frac{1}{\beta}-k_n. $ So $\{V_{\beta}(x_n): n\in N\}$ is bounded.
\end{example}

\section{Optimization of conditional convex risk measures.}\label{section4}
In this section, let $1 < p < +\infty$, $M$ a closed submodule of $L^p_{\mathcal{F}}(\mathcal{E})$, which stands for the set of payoffs, and $\pi : M \to L^0(\mathcal{F})$ a continuous module homomorphism from $(M,\mathcal{T}_{\varepsilon,\lambda})$ to $(L^0(\mathcal{F}),\mathcal{T}_{\varepsilon,\lambda})$, namely $\pi \in M^*$, which stands for a pricing function. Throughout this section, we always suppose $\pi$ and $M$ satisfies Assumption \ref{assumption4.1} below.
\begin{assumption}\label{assumption4.1}
$L^0(\mathcal{F}) \subset M$ and there exists $r^f \in L^0_{++}(\mathcal{F})$ such that $\pi (r^f) =1$ $($namely $r^f$ is a risk--free return$)$.
\end{assumption}

We also define two level sets of $\pi$ that are central to our analysis as follows:\\
$R_\pi = \{ x \in M : \pi (x) =1 \}$ \\
$Z_\pi = \{ x \in M : \pi (x) =0 \}$.\\
The set $R_\pi$ is called the set of returns and $Z_\pi$ the set of payoffs with zero price. Assumption \ref{assumption4.1} assures $R_\pi$ is nonempty, it is obvious that $R_\pi = r + Z_\pi$ for an arbitarily chosen $r \in R_\pi$.

\begin{remark}\label{remark4.2}
Assumption \ref{assumption4.1} was already employed in \cite{MMRT09} where F. Maccheroni et.al only considered the unconditional case (namely $\mathcal{F} = \{ \Omega, \emptyset \}$) and $R$ in \cite{MMRT09} amounts to $r^f$ here, and in particular F. Maccheroni \cite{MMRT09} only considered the finite-dimensional case when $M$ is the subspace spanned by the payoffs of the $n+1$ assets, the first $n$ assets of which are risky and the $(n+1)$--th one is risk--free. Here, we generalize the work of \cite{MMRT09} to the infinite--dimensional case and also allow $\mathcal{F}$ to be a generic $\sigma$--subalgebra (namely we consider the general conditional version of \cite{FKV12}).

\end{remark}

\begin{remark}\label{remark4.3}
Another assumption: there exists $z_0 \in Z_\pi$ such that $P \{ E[z_0~|~\mathcal{F}] =0 \} =0$, is often made, for example, in \cite{HR87}. This assumption and Assumption \ref{assumption4.1} together amounts to the fact: the submodule spanned by 1 and $\pi$, denoted by span$\{ 1,\pi \}$, is a free $L^0(\mathcal{F})$--submodule of $M^*$, where 1, regarded as an element of $M^*$, is identified with the restriction of the generalized expectation operator $E[\cdot~|~\mathcal{F}]$ to $M$, namely $E[1 \cdot x~|~\mathcal{F}]= E[x~|~\mathcal{F}]$ for any $x \in M$. Here, we say that span $\{ 1,\pi \}$ is $L^0(\mathcal{F})$--free, namely $\xi$ and $\eta$ in $L^0(\mathcal{F})$ are such that $\xi \cdot 1 + \eta \cdot \pi =0$ iff $\xi$ and $\eta$ are both zero.

\end{remark}

Lemma \ref{lemma4.4} below was proved by Guo in \cite{Guo97} in 1997, which has played a crucial role in the theory of random duality in \cite{GC09} as well as in this paper.

\begin{lemma}\cite{Guo97,GC09}\label{lemma4.4}
Let $E$ be an $L^0(\mathcal{F},K)$--module and $f_1,f_2,\cdots,f_n$ and $g$~$n+1$ module homomorphisms from $E$ to $L^0(\mathcal{F},K)$. Then $\bigcap^n_{i=1} N(f_i) \subset N(g)$ iff there exist $\xi_1,\xi_2,\cdots,\xi_n \in L^0(\mathcal{F},K)$ such that $g = \sum^n_{i=1} \xi_i \cdot f_i$, where $N(f)$ denotes the kernel space of a module homomorphism $f$ from $E$ to $L^0(\mathcal{F},K)$.
\end{lemma}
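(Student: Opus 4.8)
The plan is to prove the nontrivial implication — that $\bigcap_{i=1}^n N(f_i) \subset N(g)$ forces $g = \sum_{i=1}^n \xi_i f_i$ for suitable $\xi_i \in L^0(\mathcal{F},K)$ — by induction on $n$, since the converse is immediate from linearity of module homomorphisms. The base case $n=1$ is the heart of the matter: assuming $N(f_1)\subset N(g)$, I want to produce a single $\xi_1\in L^0(\mathcal{F},K)$ with $g=\xi_1 f_1$. The natural candidate is to set $\xi_1$ equal to $g(x)/f_1(x)$ on the part of $\Omega$ where $f_1(x)\neq 0$, for a cleverly chosen $x$, and to $0$ elsewhere; the key point is that this ratio does not depend on the choice of $x$, precisely because of the kernel inclusion together with the $L^0(\mathcal{F},K)$--module (i.e. ``local'', $\sigma$--stable) structure.

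Concretely, for the base case I would first observe that the support set $A := \bigvee\{ (f_1(x)\neq 0) : x\in E\}$ is well defined in $\mathcal{F}$ (using Proposition \ref{proposition2.1} to realize the essential supremum by a countable union, hence pick a sequence $\{x_k\}$ with $\bigcup_k (f_1(x_k)\neq 0) = A$ up to a null set) and that on $A^c$ one has $f_1\equiv 0$ on the $\tilde I_{A^c}$--part of every element, whence by the kernel inclusion $g\equiv 0$ there too, so $\tilde I_{A^c} g = 0$. On $A$, I would use the $\sigma$--stable patching: partition $A$ into disjoint $\mathcal{F}$--pieces $A_k\subset (f_1(x_k)\neq 0)$, define $\xi_1 := \sum_k \tilde I_{A_k}\, g(x_k)/f_1(x_k)$ (legitimate since each summand lives in $L^0(\mathcal{F},K)$ and the $A_k$ partition $A$), and then verify $g = \xi_1 f_1$ by checking it on each $\tilde I_{A_k}$--component: for arbitrary $y\in E$ and $B := (f_1(y)\neq 0)\cap A_k$, the element $z := \tilde I_B\big( f_1(x_k) y - f_1(y) x_k\big)$ satisfies $f_1(z)=0$, hence $g(z)=0$, which rearranges to $\tilde I_B\, g(y) = \tilde I_B\,\xi_1 f_1(y)$; off $(f_1(y)\neq 0)$ both sides vanish by the kernel inclusion again. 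For the inductive step, I would apply the induction hypothesis on the submodule $N(f_n)$ to the restrictions of $f_1,\dots,f_{n-1},g$, obtaining $g - \sum_{i<n}\xi_i f_i$ vanishing on $N(f_n)$, then apply the base case to get the last coefficient $\xi_n$.

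The main obstacle is the careful handling of the ``division'' when $f_1(x)$ is only nonzero on part of $\Omega$: unlike the classical vector-space statement, there need be no single $x$ with $f_1(x)$ invertible, so one genuinely must exploit $\sigma$--stability/countable concatenation to glue local ratios into a global $L^0(\mathcal{F},K)$--scalar, and one must check consistency of the patches (independence of the ratio from the chosen witness) carefully. The measurability of $\xi_1$ and the verification that the resulting formula is representative-independent are routine once the patching framework is set up, but they are where all the actual bookkeeping lies; I would defer to the localization machinery already implicit in the notion of a module homomorphism over $L^0(\mathcal{F},K)$ rather than re-derive it.
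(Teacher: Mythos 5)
Your argument is correct. Note that the paper itself does not prove Lemma \ref{lemma4.4} --- it is quoted from \cite{Guo97,GC09} --- so there is no in-paper proof to compare against; your route is the natural ``stratified'' version of the classical kernel-inclusion argument and is sound as written: the converse direction is trivial, the base case correctly localizes via the essential supremum $A$ of the supports $(f_1(x)\neq 0)$ (realized by a countable subfamily via Proposition \ref{proposition2.1} and then disjointified), the identity $g=\xi_1 f_1$ is verified piecewise through the element $z=\tilde I_B\bigl(f_1(x_k)y-f_1(y)x_k\bigr)\in N(f_1)\subset N(g)$ together with the observation that $\tilde I_{(f_1(y)=0)}y\in N(f_1)$ kills both sides off the support of $f_1(y)$, and the inductive step legitimately applies the hypothesis to the restrictions on the $L^0$-submodule $N(f_n)$ (the resulting $\xi_i$ are global scalars, so $g-\sum_{i<n}\xi_i f_i$ makes sense on all of $E$ and the base case finishes). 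The only points demanding care --- well-definedness of the quotient $g(x_k)/f_1(x_k)$ on $A_k$, countable gluing over the disjoint $A_k$, and $\tilde I_{A^c}g(y)=0$ via $\tilde I_{A^c}y\in N(f_1)$ --- are all addressed, so I see no gap.
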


Now, let $V_\beta : L^2_{\mathcal{F}}(\mathcal{E}) \to L^0(\mathcal{F})$ be the conditional monotone mean--variance risk measure with the aversion index $\beta$ to conditional variance, as in Section \ref{section3}, we can consider solutions to the conditional monotone mean--variance problem.

\begin{problem}\label{problem4.5}
Minimize $V_\beta(x)$ for $x$ in $R_\pi$.
\end{problem}

Theorem \ref{theorem4.6} below provides a perfect solution to Problem \ref{problem4.5}, which is one of our central results of this section. Similar to the notion of a Banach lattice, one can have the notion of a complete $RN$ module lattice (or random Banach lattice in terms of \cite{HLR91}), it is clear that $L^p_{\mathcal{F}}(\mathcal{E})$ is a random Banach lattice.

\begin{theorem}\label{theorem4.6}
$x^* \in R_\pi$ is a solution to Problem \ref{problem4.5} iff $- \frac{1}{r^f} \cdot V'_\beta(x^*)|_M = \pi$. In particular, when $M$ is also a sublattice of $L^2_{\mathcal{F}}(\mathcal{E})$, namely, $|x| \in M$ for any $x \in M$, $x^* \wedge k_{x^*}$ is unique $($and hence $k_{x^*}$ is also unique$)$, at which time $V'_\beta (x^*)|_M = V'_\beta (x^*)$. Where $V'_\beta (x^*)|_M$ is the restriction of $V'_\beta (x^*)$ to $M$.
\end{theorem}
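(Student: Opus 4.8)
The plan is to treat Problem \ref{problem4.5} as an unconstrained optimization on the affine submodule $R_\pi$ and exploit the $L^0$--Gateaux differentiability of $V_\beta$ established in Theorem \ref{theorem3.10}. Since $R_\pi = r + Z_\pi$ for any fixed $r \in R_\pi$ and $Z_\pi = N(\pi)$ is a submodule of $M$, the point $x^*$ minimizes $V_\beta$ over $R_\pi$ if and only if the one--sided directional derivative of $V_\beta$ at $x^*$ along every direction $h \in Z_\pi$ is nonnegative, i.e. $V'_\beta(x^*)(h) \geq 0$ for all $h \in Z_\pi$; replacing $h$ by $-h$ forces $V'_\beta(x^*)(h) = 0$ for all $h \in Z_\pi$. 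Here one needs the local, $L^0$--convex refinement of the classical first--order condition: because $V_\beta$ is $L^0$--convex and $L^0$--Gateaux differentiable on all of $L^2_{\mathcal{F}}(\mathcal{E})$, the inequality $V_\beta(x^* + h) \geq V_\beta(x^*) + V'_\beta(x^*)(h)$ holds for every $h$, so the vanishing of $V'_\beta(x^*)$ on $Z_\pi$ is sufficient for optimality, and $L^0$--Gateaux differentiability gives necessity by the usual difference--quotient argument, carried out $\omega$--wise via the countable--partition (local) structure. Thus $x^*$ solves Problem \ref{problem4.5} iff $V'_\beta(x^*)|_M$ annihilates $N(\pi)$.

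Next I would convert ``$V'_\beta(x^*)|_M$ vanishes on $N(\pi)$'' into the stated scaling identity using Lemma \ref{lemma4.4} with $n=1$: $N(\pi) \subset N(V'_\beta(x^*)|_M)$ forces $V'_\beta(x^*)|_M = \xi \cdot \pi$ for some $\xi \in L^0(\mathcal{F})$. To pin down $\xi$, evaluate both sides at the risk--free return $r^f \in M$ (available by Assumption \ref{assumption4.1}, which also gives $L^0(\mathcal{F}) \subset M$): on one hand $\pi(r^f) = 1$, on the other hand $V'_\beta(x^*)(r^f) = -\beta E[(k_{x^*} - x^*)^+ \cdot r^f \mid \mathcal{F}]$ by the formula $V'_\beta(x) = -\beta(k_x - x)^+$ from Theorem \ref{theorem3.10}. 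I must therefore check that evaluating $V'_\beta(x^*)$ at $r^f$ reproduces $-r^f$ up to the constraint, which follows from the cash--invariance property (2) of Definition \ref{definition3.3} differentiated in the direction of a constant, or more directly from $E[(k_x - x)^+ \mid \mathcal{F}] = \frac{1}{\beta}$: taking $h = c \in L^0(\mathcal{F})$ one computes $V'_\beta(x^*)(c) = -\beta E[(k_{x^*}-x^*)^+ \mid \mathcal{F}]\cdot c = -c$, so $V'_\beta(x^*)$ restricted to the copy of $L^0(\mathcal{F})$ inside $M$ is $-\mathrm{id}$, and in particular $V'_\beta(x^*)(r^f) = -r^f$. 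Combining, $-r^f = \xi \cdot \pi(r^f)\cdot r^f$— wait, more carefully: $V'_\beta(x^*)(r^f) = \xi \pi(r^f) = \xi$, while $V'_\beta(x^*)(r^f) = -r^f$, hence $\xi = -r^f$, giving $V'_\beta(x^*)|_M = -r^f \cdot \pi$, i.e. $-\frac{1}{r^f}V'_\beta(x^*)|_M = \pi$ (legitimate since $r^f \in L^0_{++}(\mathcal{F})$ is invertible in $L^0(\mathcal{F})$).

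For the uniqueness clause, suppose $M$ is a sublattice. Given two solutions $x^*_1, x^*_2$, both satisfy $-\beta(k_{x^*_i} - x^*_i)^+|_M = -r^f \pi$, so $(k_{x^*_1}-x^*_1)^+$ and $(k_{x^*_2}-x^*_2)^+$ induce the same functional on $M$; because $M$ is a sublattice of the $RIP$ module $L^2_{\mathcal{F}}(\mathcal{E})$ these representing elements must lie in $M$ and agree there, and by a strict--convexity argument on the $L^0$--concave dual function $g(z) = E[x z \mid \mathcal{F}] - U^*_\beta(z)$ (already used in Theorem \ref{theorem3.10} to show $\partial V_\beta(x) = \{-\beta(k_x-x)^+\}$ is a singleton) one deduces $k_{x^*_1}\wedge x^*_1 = k_{x^*_2}\wedge x^*_2$; since $x \wedge k_x$ determines $V'_\beta(x)$ and $V_\beta(x) = U_\beta(x\wedge k_x)$, and the constraint $\pi(x) = 1$ together with $R_\pi = r + Z_\pi$ pins the ambiguity, $x^* \wedge k_{x^*}$ is unique and then $k_{x^*} = E[x^*\wedge k_{x^*}\mid\mathcal{F}] + \frac{1}{\beta}$ is unique too; finally the sublattice hypothesis guarantees $(k_{x^*}-x^*)^+ \in M$, so $V'_\beta(x^*)|_M = V'_\beta(x^*)$. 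The main obstacle I anticipate is the uniqueness part: transferring equality of the \emph{restrictions} $V'_\beta(x^*_1)|_M = V'_\beta(x^*_2)|_M$ back to equality of the global gradients $-\beta(k_{x^*_i}-x^*_i)^+$ genuinely needs the sublattice structure (so that the positive parts live inside $M$) plus the strict $L^0$--concavity of $g$; the differentiability and first--order steps are routine given Theorems \ref{theorem3.10}, \ref{theorem2.29}, \ref{theorem2.30} and Lemma \ref{lemma4.4}.
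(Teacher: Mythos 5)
Your first half is exactly the paper's argument: the first--order characterization ($x^*$ optimal iff $V'_\beta(x^*)$ vanishes on $Z_\pi$, using $R_\pi=x^*+Z_\pi$ and $Z_\pi=-Z_\pi$), then Lemma \ref{lemma4.4} to write $V'_\beta(x^*)|_M=\xi\cdot\pi$, then evaluation at $r^f$ with $E[(k_{x^*}-x^*)^+\mid\mathcal{F}]=\frac{1}{\beta}$ to get $\xi=-r^f$. That part is correct as written (you should note explicitly that $k_{x^*}\in L^0(\mathcal{F})\subset M$ by Assumption \ref{assumption4.1}, which is what later puts $(k_{x^*}-x^*)^+$ in $M$ under the sublattice hypothesis, and that pulling $r^f$ out of $E[\cdot\mid\mathcal{F}]$ uses its $\mathcal{F}$--measurability).

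The uniqueness clause, however, has a genuine gap. You correctly reduce, via the sublattice hypothesis and the Riesz representation in $L^2_{\mathcal{F}}(\mathcal{E})$, to the equality of the global gradients $(k_{x^*}-x^*)^+=(k_{y^*}-y^*)^+$ for two solutions $x^*,y^*$. But your next step --- getting $k_{x^*}\wedge x^*=k_{y^*}\wedge y^*$ ``by a strict--convexity argument on $g$'', with the constraint $\pi(x)=1$ ``pinning the ambiguity'' --- does not go through. Strict $L^0$--concavity of $g$ was only used to show that, for a \emph{fixed} $x$, the maximizer over $\mathcal{P}^0_c\cap\mathcal{D}$ is unique (i.e.\ $\partial V_\beta(x)$ is a singleton); it gives no relation between two different base points. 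And equality of the positive parts alone does not determine $k_x\wedge x$: if $y=x+c$ with $c\in L^0(\mathcal{F})$, then $k_y=k_x+c$, $(k_y-y)^+=(k_x-x)^+$, yet $k_y\wedge y=k_x\wedge x+c$; nor does $\pi(x^*)=\pi(y^*)=1$ help directly, since gradient equality does not force $y^*-x^*\in L^0(\mathcal{F})$. The missing ingredient, which the paper uses, is the equality of the \emph{optimal values}: since $k_z\wedge z=k_z-(k_z-z)^+$ with $k_z$ $\mathcal{F}$--measurable, gradient equality gives $D[k_{x^*}\wedge x^*\mid\mathcal{F}]=D[(k_{x^*}-x^*)^+\mid\mathcal{F}]=D[(k_{y^*}-y^*)^+\mid\mathcal{F}]=D[k_{y^*}\wedge y^*\mid\mathcal{F}]$; combining this with $V_\beta(x^*)=V_\beta(y^*)$ and $V_\beta(z)=-E[k_z\wedge z\mid\mathcal{F}]+\frac{\beta}{2}D[k_z\wedge z\mid\mathcal{F}]$ yields $E[k_{x^*}\wedge x^*\mid\mathcal{F}]=E[k_{y^*}\wedge y^*\mid\mathcal{F}]$, i.e.\ $k_{x^*}-\frac{1}{\beta}=k_{y^*}-\frac{1}{\beta}$, hence $k_{x^*}=k_{y^*}$, and then $k_{x^*}\wedge x^*=k_{x^*}-(k_{x^*}-x^*)^+=k_{y^*}-(k_{y^*}-y^*)^+=k_{y^*}\wedge y^*$. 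Insert this value--comparison step and your proof closes.
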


\begin{proof}
Let us recall that $V'_\beta (y) = - \beta (k_y -y)^+$ for any $y \in L^2_{\mathcal{F}}(\mathcal{E})$ and $R_\pi$ is a $\mathcal{T}_{\varepsilon,\lambda}$--closed $L^0$--convex subset of $M$. Since $x^* \in R_\pi$ is a solution to Problem \ref{problem4.5} iff $V'_\beta (x^*)(x-x^*) = \lim_{t \downarrow 0} \frac{V_\beta (x^* + t(x-x^*)) - V_\beta (x^*)}{t} \geq 0$ for any $x \in R_\pi$, namely, iff $V'_\beta (x^*)(h) \geq 0$ for any $h \in Z_\pi$ (please bear in mind that $R_\pi = x^* + Z_\pi$). Again observe that $Z_\pi = - Z_\pi$, then $x^*$ is a solution to Problem \ref{problem4.5} iff $V'_\beta (x^*)(h) =0$ for any $h \in Z_\pi$, and hence by Lemma \ref{lemma4.4} $V'_\beta (x^*)(h) =0$ $\forall h \in Z_\pi$ iff there exists $\xi \in L^0(\mathcal{F})$ such that $V'_\beta (x^*)|_M = \xi \cdot \pi$, then $V'_\beta (x^*)(r^f) = \xi \cdot \pi(r^f) = \xi$. Notice that $V'_\beta (x^*)(r^f) = E[- \beta (k_{x^*} - x^*)^+ \cdot r^f~|~\mathcal{F}] = r^f E[- \beta (k_{x^*}- x^*)^+~|~\mathcal{F}] = - r^f$, so $\xi =-r^f$, then $\pi = - \frac{1}{r^f} V'_\beta (x^*)|_M$.

Since $V'_\beta (x^*)|_M$ is exactly the orthogonal projection of $V'_\beta (x^*)$ to $M$, when $M$ is a sublattice of $L^2_{\mathcal{F}}(\mathcal{E})$, $V'_\beta (x^*) = -\beta (k_{x^*} - x^*)^+ \in M$, one can have $V'_\beta (x^*)|_M = V'_\beta (x^*)$. Let $y^* \in R_\pi$ be another solution to Problem \ref{problem4.5}, then $V'_\beta (x^*) = V'_\beta (y^*)$, namely $(k_{y^*} - y^*)^+ = (k_{x^*} - x^*)^+$, which in turn implies that $D[k_{y^*} \wedge y^*~|~\mathcal{F}] = D[(k_{y^*} -y^*)^+~|~\mathcal{F}] = D[(k_{x^*} -x^*)^+~|~\mathcal{F}] = D[k_{x^*} \wedge x^*~|~\mathcal{F}]$. If follows from $V_\beta (x^*) = V_\beta (y^*)$ that $E[k_{x^*} \wedge x^*~|~\mathcal{F}] = E[k_{y^*} \wedge y^*~|~\mathcal{F}]$, namely $k_{x^*} -\frac{1}{\beta}= k_{y^*} -\frac{1}{\beta}$, that is, $k_{x^*} = k_{y^*}$. It immediately follows from $(k_{x^*} - x^*)^+ = (k_{y^*} - y^*)^+$ that $k_{x^*} \wedge x^* = k_{y^*} \wedge y^*$.

\end{proof}

\begin{remark}\label{remark4.7}
Theorem \ref{theorem4.6} also produces an arbitrage free pricing rule: $\pi(y) = E[-\frac{1}{r^f} \cdot (-\beta (k_{x^*} - x^*)^+)\cdot y~|~\mathcal{F}] = \frac{1}{r^f} E[\beta (k_{x^*} - x^*)^+ \cdot y~|~\mathcal{F}]$ for any $y \in M$. Write $k^\beta_{x^*}$ for the $k_{x^*}$ and $x^*_\beta$ for the $x^*$, then it is obvious that $\beta(k^\beta_{x^*_\beta} - x^*_\beta)^+$ does not depend on $\beta$, denoted by $\triangledown V$, then $\pi(y) = \frac{1}{r^f} E[(\triangledown V) \cdot y~|~\mathcal{F}]$ for any $y \in M$. According to the Riesz representation theorem in complete $RIP$ modules \cite{GY96}, there exists unique one $\pi^* \in M$ such that $\pi (y) = E[\pi^* \cdot y~|~\mathcal{F}]$ for any $y \in M$, then it is clear that $\pi^*$ is exactly the orthogonal projection of $\frac{1}{r^f} \cdot \triangledown V$ to $M$. Finally, since $\triangledown V$ is nonnegative, $\pi$ is a nonnegative $L^0$--linear functional, in fact, this also shows that Theorem \ref{theorem4.6} essentially gives a representation for $\pi$ by $\frac{1}{r^f} \triangledown V$. By the way, let us link our Theorem \ref{theorem4.6} here with Theorem 4.1 of \cite{MMRT09}. In \cite{MMRT09}, the $n+1$ assets are considered: the first $n$ assets are risky, while the $(n+1)$th asset is risk--free. F.Maccheroni, et.al \cite{MMRT09} only consider the one--period (namely nonconditional or static) allocation problem, the gross return on the $i$th asset after one period is denoted by $X_i$ $(X = (X_1,X_2,\cdots,X_n)')$ (here $'$ stands for the usual transpose operator) and the return on the $(n+1)$th asset is risk--free and equal to $R$ (a positive number). Again, assume that $Y_i$ be the end--of--period payoff per share of asset $i$ and $p_i$ its current price for $i = 1,2,\cdots,n+1$, then $X_i = Y_i/p_i$ for $i = 1,2,\cdots,n$ and $R = Y_{n+1}/p_{n+1}$. For a portfolio consisting of $q_i$ shares of asset $i$, the corresponding payoff is $\sum_{i=1}^{n+1} q_i Y_i$, whose price is $\sum_{i=1}^{n+1} q_i p_i$, namely this is equivalent to defining a pricing function $\pi : M= \{ \sum_{i=1}^{n+1} q_i Y_i~|~(q_1,q_2,\cdots,q_{n+1})' \in R^{n+1} \} \to (-\infty, +\infty)$ by $\pi (\sum_{i=1}^{n+1} q_i Y_i) = \sum_{i=1}^{n+1} q_i p_i$, especially $\pi(X_i) =1$ for $i=1,2,\cdots,n$ and $\pi(R)=1$, at which time $R_\pi =\{ y \in M : \pi(y)=1 \}$, which can be written as $R_\pi = \{ R + \alpha \cdot (X - R \cdot \vec{1}) : \alpha \in R^n \}$ (see \cite{MMRT09}). It seems to me that to obtain their Theorem 4.1 F.Maccheroni, et.al implicitly assume in \cite{MMRT09} a very restrictive condition: $D[X~|~A]$ is positive definite for each $A \in \mathcal{F}$ with $P(A) >0$, where $D[X~|~A]$ stands for the conditional covariance matrix of $X$ on the event $A$. Considering the uniqueness problem, our Theorem \ref{theorem4.6} requires $M$ is a Banach sublattice of $L^2(\mathcal{E})$, it is clear that $M$ is rarely finite dimensional. In fact, if $M$ is a Banach sublattice of $L^2(\mathcal{E})$, then it follows from Corollary 1.b.4 of \cite{LT79} that $M$ is order isometric to $L^2(\Omega',\mathcal{E}',P')$ for a suitable probability space $(\Omega',\mathcal{E}',P')$, we can assume, without loss of generality, that $(\Omega',\mathcal{E}',P')$ is just $(\Omega,\mathcal{E},P)$. Thus $M$ is finite dimensional, $\mathcal{E}$ is necessarily generated by finitely many (for example, $m$) $v$--atoms. Further, if $D[X]$ is positive definite, then $\{ 1,X_1,X_2,\cdots,X_n \}$ is linearly independent, and hence $m$ must be greater than or equal to $n+1$. Careful readers can easily check that all the examples provided in \cite{MMRT09} satisfy the condition that $\mathcal{E}$ is purely atomic and $m \geq n+1$, at which time it is impossible that $D[X~|~A]$ is positive definite for any $A \in \mathcal{E}$ with $P(A) >0$. Thus, concerning the uniqueness problem, our Theorem \ref{theorem4.6} at least improves the formulation of Theorem 4.1 of \cite{MMRT09}. According to the above--stated discussions, we suggest that the space $M$ of payoffs should be an infinite dimensional Banach sublattice of $L^2(\mathcal{E})$ when people consider the optimization problem of monotone mean--variance for a generic probability space $(\Omega,\mathcal{E},P)$!
\end{remark}

Now, we consider the more general conditional mean--conditional convex risk measure (briefly, conditional mean--risk measure) problem. Let us first make a nontrivial assumption--Assumption \ref{assumption4.8} below.

\begin{assumption}\label{assumption4.8}
There exists some $z_0 \in Z_\pi$ such that $P \{ E[z_0~|~\mathcal{F}] =0 \} =0$.
\end{assumption}

\begin{problem}\label{problem4.9}
Minimize $\rho (x)$ for $x$ in $R_\pi$ subject to the constraint that $E[x~|~\mathcal{F}] = \mathrm{w}$ and $|||x|||_p \leq r$ for any fixed $\mathrm{w} \in L^0(\mathcal{F})$ and $r \in L^0_{++}(\mathcal{F})$, where $\rho : L^p_{\mathcal{F}}(\mathcal{E}) \to \bar{L}^0(\mathcal{F})$ is a given conditional convex risk measure.
\end{problem}

\begin{remark}\label{remark4.10}
We only consider the meaningful case: $G := \{ x \in M : \pi (x) =1, E[x~|~\mathcal{F}] = \mathrm{w}$ and $|||x|||_p \leq r \}$ is not vacuous, for example, this is the case when $r$ is sufficiently large. It is also obvious that $G$ is an almost surely (a.s.) bounded $\mathcal{T}_{\varepsilon,\lambda}$--closed $L^0$--convex subset of $L^p_{\mathcal{F}}(\mathcal{E})$.
\end{remark}

Let $(E,\|\cdot\|)$ be an $RN$ module over $K$ with base $(\Omega,\mathcal{F},P)$, a subset $H$ of $E$ is said to be a.s. bounded if there exists some $\xi \in L^0_+(\mathcal{F})$ such that $\|x\| \leq \xi$ for any $x \in H$, namely, $\bigvee\{\|x\|: x\in H\}\in L^0_+(\mathcal{F})$.
\par
On Problem \ref{problem4.9}, we have the following two results--Theorems \ref{theorem4.11} and \ref{theorem4.12}.
\begin{theorem}\label{theorem4.11}
Problem \ref{problem4.9} always has a solution $x^*$ in $G=\{x\in M: \pi(x)=1, E[x|\mathcal{F}] = \mathrm{w}$ and $|||x|||_p\leq r\}$.
\end{theorem}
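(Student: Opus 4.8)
The plan is to recognise Problem \ref{problem4.9} as the minimisation of a proper, $\mathcal{T}_{\varepsilon,\lambda}$-lower semicontinuous, $L^0$-convex function over an $L^0$-convexly compact $L^0$-convex set, and then to invoke the fundamental theorem of random convex optimization established in \cite{GZWW20}. Thus the work naturally splits into two parts: (i) verifying that the constraint set $G=\{x\in M:\pi(x)=1,\ E[x~|~\mathcal{F}]=\mathrm{w},\ |||x|||_p\le r\}$ has $L^0$-convex compactness, and (ii) checking that the restriction $\rho|_G$ meets the hypotheses of that theorem.

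For (i), I would first record the structural input: since $1<p<+\infty$, $(L^p_{\mathcal{F}}(\mathcal{E}),|||\cdot|||_p)$ is a $\sigma$-stable, $\mathcal{T}_{\varepsilon,\lambda}$-complete $RN$ module whose random conjugate space is $(L^q_{\mathcal{F}}(\mathcal{E}),|||\cdot|||_q)$ and whose second random conjugate space is again $(L^p_{\mathcal{F}}(\mathcal{E}),|||\cdot|||_p)$, both identifications being isometric in the sense of $RN$ modules (Proposition \ref{proposition3.1}); in particular $L^p_{\mathcal{F}}(\mathcal{E})$ is random reflexive. Next I would check that $G$ is $\sigma$-stable: $M$, being a $\mathcal{T}_{\varepsilon,\lambda}$-closed submodule of a $\mathcal{T}_{\varepsilon,\lambda}$-complete $RN$ module, is $\sigma$-stable, and each of the three defining relations is preserved under countable concatenations because $\pi$ and $E[\cdot~|~\mathcal{F}]$ are $\mathcal{T}_{\varepsilon,\lambda}$-continuous module homomorphisms and $|||\cdot|||_p$ is local. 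Combining this with Remark \ref{remark4.10} ($G$ is a.s. bounded, $\mathcal{T}_{\varepsilon,\lambda}$-closed and $L^0$-convex) and the standing assumption $G\neq\emptyset$, we are exactly in the situation covered by the characterization theorem of $L^0$-convex compactness of \cite{GZWW20}: a nonempty, a.s. bounded, $\mathcal{T}_{\varepsilon,\lambda}$-closed, $L^0$-convex, $\sigma$-stable subset of a random reflexive $\mathcal{T}_{\varepsilon,\lambda}$-complete $RN$ module has $L^0$-convex compactness. Hence $G$ is $L^0$-convexly compact.

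For (ii), $\rho$ is by definition a proper, $\mathcal{T}_c$-lower semicontinuous, $L^0$-convex and local function on $L^p_{\mathcal{F}}(\mathcal{E})$; since $L^p_{\mathcal{F}}(\mathcal{E})$ is $\sigma$-stable, Theorem \ref{theorem2.20} gives that $\rho$ is also $\mathcal{T}_{\varepsilon,\lambda}$-lower semicontinuous, and therefore so is $\rho|_G$ on $G$. Barring the degenerate case in which $\rho\equiv+\infty$ on $G$ (where every point of $G$ is trivially a solution), a short and routine argument using the locality of $\rho$ together with the $\sigma$-stability of $G$ — a measurable partition of $\Omega$ followed by a countable concatenation — produces a point of $G$ at which $\rho$ is finite on all of $\Omega$, so that $\rho|_G$ is proper. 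Applying the fundamental theorem of random convex optimization of \cite{GZWW20} to the proper $\mathcal{T}_{\varepsilon,\lambda}$-lower semicontinuous $L^0$-convex function $\rho|_G$ on the $L^0$-convexly compact $L^0$-convex set $G$ then yields some $x^*\in G$ with $\rho(x^*)=\bigwedge\{\rho(x):x\in G\}$; that is, $x^*$ solves Problem \ref{problem4.9}.

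I expect the main obstacle to be step (i): correctly invoking the \cite{GZWW20} characterization of $L^0$-convex compactness and verifying its hypotheses — above all the random reflexivity of $L^p_{\mathcal{F}}(\mathcal{E})$ (which Proposition \ref{proposition3.1} supplies) and the $\sigma$-stability of $G$ — everything else being either definitional or routine lower-semicontinuity and properness bookkeeping.
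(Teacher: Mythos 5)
Your proposal is correct and follows essentially the same route as the paper: $G$ is a.s.\ bounded, $\mathcal{T}_{\varepsilon,\lambda}$--closed and $L^0$--convex, $L^p_{\mathcal{F}}(\mathcal{E})$ is random reflexive, so Corollary \ref{corollary4.15} gives $L^0$--convex compactness of $G$, and then Proposition \ref{proposition4.16}(1) yields a minimizer. Your extra bookkeeping (the $\sigma$--stability check, which is automatic for a $\mathcal{T}_{\varepsilon,\lambda}$--closed $L^0$--convex set, and the properness of $\rho|_G$) only makes explicit what the paper's one-line proof leaves implicit.
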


\begin{theorem}\label{theorem4.12}
When $\rho$ is the conditional entropic risk measure $\rho_{\gamma}$ $($see Example \ref{example3.5}$)$, $x^*$ is also unique.
\end{theorem}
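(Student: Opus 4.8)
The plan is to obtain uniqueness from a strict midpoint convexity of $\rho_\gamma$ along directions with vanishing conditional $\mathcal{F}$--mean, which is precisely where the $RIP$ module structure of the conditional Hilbert space $L^2_{\mathcal{F}}(\mathcal{E})$ comes in. Let $x^*$ and $y^*$ be two solutions of Problem \ref{problem4.9} and put $c=\bigwedge\{\rho_\gamma(x)\,|\,x\in G\}$, so that $\rho_\gamma(x^*)=\rho_\gamma(y^*)=c$ by Theorem \ref{theorem4.11}; in the meaningful case some feasible point has a.s. finite entropic risk, hence $c<+\infty$ a.s. (otherwise one first localises to the $\mathcal{F}$--measurable set $(c<+\infty)$, which is harmless since $\rho_\gamma$, $M$, $\pi$, $E[\cdot\,|\,\mathcal{F}]$, $|||\cdot|||_p$ and therefore $G$ are all local and $\sigma$--stable, and on $(c<+\infty)^c$ every feasible element is optimal for trivial reasons). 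Since $G$ is $L^0$--convex, the midpoint $z=\tfrac12 x^*+\tfrac12 y^*$ again belongs to $G$, so $\rho_\gamma(z)\geq c$.

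Now $u:=e^{-\gamma x^*/2}$ and $v:=e^{-\gamma y^*/2}$ lie in $L^2_{\mathcal{F}}(\mathcal{E})$, because $|||u|||_2^2=E[e^{-\gamma x^*}\,|\,\mathcal{F}]<+\infty$ and likewise for $v$, and $e^{-\gamma z}=u\cdot v$. Applying the conditional Cauchy--Schwarz inequality in the $RIP$ module $(L^2_{\mathcal{F}}(\mathcal{E}),(\cdot,\cdot))$, with $(u,v)=E[u\cdot v\,|\,\mathcal{F}]$, gives
\begin{equation}\nonumber
E[e^{-\gamma z}\,|\,\mathcal{F}]=(u,v)\leq |||u|||_2\,|||v|||_2=E[e^{-\gamma x^*}\,|\,\mathcal{F}]^{1/2}\,E[e^{-\gamma y^*}\,|\,\mathcal{F}]^{1/2},
\end{equation}
and, since $t\mapsto\tfrac1\gamma\ln t$ is strictly increasing, this yields $\rho_\gamma(z)\leq\tfrac12\rho_\gamma(x^*)+\tfrac12\rho_\gamma(y^*)=c$. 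Together with $\rho_\gamma(z)\geq c$ we conclude $\rho_\gamma(z)=c$, and tracing the inequalities backwards, equality holds a.s. in the conditional Cauchy--Schwarz inequality above.

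The decisive point is the equality case of Cauchy--Schwarz in an $RIP$ module: equality forces $u$ and $v$ to be $L^0(\mathcal{F})$--linearly dependent, and since $u,v>0$ on $\Omega$ there is $\lambda\in L^0_{++}(\mathcal{F})$ with $u=\lambda v$, that is, $x^*-y^*=-\tfrac2\gamma\ln\lambda\in L^0(\mathcal{F})$. Taking $E[\cdot\,|\,\mathcal{F}]$ and using that both $x^*$ and $y^*$ satisfy the constraint $E[\cdot\,|\,\mathcal{F}]=\mathrm{w}$ of Problem \ref{problem4.9}, we obtain $x^*-y^*=E[x^*-y^*\,|\,\mathcal{F}]=\mathrm{w}-\mathrm{w}=0$, hence $x^*=y^*$.

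I expect the main obstacle to be making the equality analysis of the conditional Cauchy--Schwarz inequality fully rigorous in the $RIP$ module $L^2_{\mathcal{F}}(\mathcal{E})$ --- equality must force $L^0(\mathcal{F})$--proportionality of the two strictly positive vectors simultaneously on all of $\Omega$, not just on a partition --- together with the bookkeeping of the localisation to $(c<+\infty)$. The remaining steps follow directly from Theorem \ref{theorem4.11}, from the cash--invariance of $\rho_\gamma$ hidden in $\rho_\gamma(x+m)=\rho_\gamma(x)-m$, and from the conditional--mean constraint; note in particular that the ball constraint $|||x|||_p\leq r$ plays no role in uniqueness, only in existence.
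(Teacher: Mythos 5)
Your proposal is correct and takes essentially the same route as the paper: both exploit the equality $\rho_\gamma(\tfrac{x^*+y^*}{2})=\tfrac12\rho_\gamma(x^*)+\tfrac12\rho_\gamma(y^*)$ at the midpoint of two minimizers, recognize it as the equality case of the conditional Cauchy--Schwarz inequality for $e^{-\gamma x^*/2}$ and $e^{-\gamma y^*/2}$ in the $RIP$ module $L^2_{\mathcal{F}}(\mathcal{E})$, and then use the constraint $E[\cdot\,|\,\mathcal{F}]=\mathrm{w}$ to kill the resulting $L^0(\mathcal{F})$--constant. The ``decisive point'' you flag (equality forcing $L^0(\mathcal{F})$--proportionality of two strictly positive vectors) is exactly what the paper isolates as Lemma \ref{lemma4.18}, proved there via random strict convexity of $RIP$ modules.
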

\par
Theorem \ref{theorem4.11} is based on the recently developed theory of $L^0$--convex compactness and closely related random convex optimization in \cite{GZWW20}, let us first recall some necessary concepts and basic results from \cite{GZWW20}.
\begin{definition}\label{definition4.13}\cite{GZWW20}
Let $(E,\mathcal{T})$ be a Hausdorff topological module over the topological algebra $(L^0(\mathcal{F},K),\mathcal{T}_{\varepsilon,\lambda})$. A nonempty closed $L^0$--convex subset $G$ of $E$ is said to have $L^0$--convex compactness $($or, to be $L^0$--convexly compact$)$ if any nonempty family of nonempty closed $L^0$--convex subsets of $G$ has a nonempty intersection whenever this family has finite intersection property.
\end{definition}

\begin{proposition}\label{proposition4.14}\cite{GZWW20}
Let $G$ be a nonempty $\mathcal{T}_{\varepsilon,\lambda}$--closed $L^0$--convex subset of a complete $RN$ module $(E,\|\cdot\|)$ over $K$ with base $(\Omega,\mathcal{F}, P)$. Then $G$ is $L^0$--convexly compact iff there exists some $x_0\in G$ for any given $f\in E^*$ such that $Re(f(x_0))=\bigvee\{Re(f(x)): x\in G\}$.
\end{proposition}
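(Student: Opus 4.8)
The plan is to prove the two implications separately; the ``only if'' part is a short direct argument from Definition~\ref{definition4.13}, while the ``if'' part needs a random analogue of James' theorem. For necessity, fix $f\in E^{*}$, put $H=\{Re(f(x)):x\in G\}$ and $s=\bigvee H$, which lives in $\bar L^{0}(\mathcal F)$ since $(\bar L^{0}(\mathcal F),\le)$ is a complete lattice. For each $a\in H$ I would consider $C_{a}:=\{x\in G:Re(f(x))\ge a\}$; it is nonempty, $L^{0}$--convex, and $\mathcal T_{\varepsilon,\lambda}$--closed, being $G$ intersected with the $f$--preimage of the $\mathcal T_{\varepsilon,\lambda}$--closed set $\{t\in L^{0}(\mathcal F,K):Re(t)\ge a\}$. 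Because $G$ is $L^{0}$--convex, $\tilde I_{A}x+\tilde I_{A^{c}}y\in G$ for all $x,y\in G$ and $A\in\mathcal F$, so choosing $A=(Re(f(x))\ge Re(f(y)))$ shows $H$ is closed under $\vee$ and $C_{a\vee b}\subseteq C_{a}\cap C_{b}$; hence $\{C_{a}:a\in H\}$ has the finite intersection property, and the $L^{0}$--convex compactness of $G$ yields some $x_{0}\in\bigcap_{a\in H}C_{a}$. Then $Re(f(x_{0}))\ge a$ for every $a\in H$, i.e. $Re(f(x_{0}))\ge s$; as the reverse inequality is obvious, $s=Re(f(x_{0}))\in L^{0}(\mathcal F)$ and the supremum is attained.

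For sufficiency I would first reduce to the a.s.\ bounded case: applying the attainment hypothesis to $f$, $-f$, and, in the complex case, also to $if$ and $-if$, shows $\bigvee\{|f(x)|:x\in G\}\in L^{0}_{+}(\mathcal F)$ for every $f\in E^{*}$, so the family $\{\hat x:x\in G\}$ in the second random conjugate space $E^{**}:=(E^{*})^{*}$, where $\hat x(f)=f(x)$ and $\|\hat x\|=\|x\|$, is pointwise a.s.\ bounded on $E^{*}$. Since $(E^{*},\|\cdot\|)$ is $\mathcal T_{\varepsilon,\lambda}$--complete, the random uniform boundedness principle for a.s.\ bounded random linear functionals gives $\bigvee\{\|x\|:x\in G\}=\bigvee\{\|\hat x\|:x\in G\}\in L^{0}_{+}(\mathcal F)$, so $G$ is a.s.\ bounded. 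Now, working in the random dual pair $\langle E^{**},E^{*}\rangle$ with the canonical embedding $J:E\to E^{**}$, a random Banach--Alaoglu/random bipolar argument makes every a.s.\ bounded, $L^{0}$--convex, $\sigma(E^{**},E^{*})$--closed subset of $E^{**}$ $L^{0}$--convexly compact for the random weak$^{*}$ topology. Hence, given any family $\{F_{i}\}_{i\in I}$ of nonempty $\mathcal T_{\varepsilon,\lambda}$--closed $L^{0}$--convex subsets of $G$ with the finite intersection property, the random weak$^{*}$ closures $\overline{J(F_{i})}$ have a common point $\Phi\in E^{**}$. To conclude it then suffices to show $\Phi\in J(E)$: each $F_{i}$, being $\mathcal T_{\varepsilon,\lambda}$--closed and $L^{0}$--convex, is $\sigma(E,E^{*})$--closed (by the separation Theorem~\ref{theorem2.16} together with its $\sigma$--stability as a closed $L^{0}$--convex subset of a complete $RN$ module), so a net $J(y_{\alpha})\to\Phi=J(x^{*})$ weak$^{*}$ with $y_{\alpha}\in F_{i}$ forces $x^{*}\in F_{i}$, whence $x^{*}\in\bigcap_{i}F_{i}$.

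The genuinely hard step is precisely the claim $\Phi\in J(E)$, and I expect it to be the main obstacle. Were it to fail, one would have to extract from the failure --- using a separating sequence of functionals provided by Theorem~\ref{theorem2.16} and the supremum/infimum selection of Proposition~\ref{proposition2.1} --- some $g\in E^{*}$ whose supremum of $Re(g(\cdot))$ over $G$ is approached along the nets $J(F_{i})$ but is not attained at any point of $G$, contradicting the hypothesis. This is exactly a random version of James' theorem, which I would carry out, as in \cite{GZWW20}, by transplanting the classical ``sup--limiting functional'' construction into the $RN$ module setting with the Hahn--Banach/separation machinery; everything else above is routine once the random uniform boundedness principle, the random Banach--Alaoglu theorem, and Theorem~\ref{theorem2.16} are available.
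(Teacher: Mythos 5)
Your ``only if'' direction is correct and complete: for $f\in E^{*}$ the level sets $C_{a}=\{x\in G: Re(f(x))\geq a\}$, $a\in H=\{Re(f(x)):x\in G\}$, are nonempty, $\mathcal{T}_{\varepsilon,\lambda}$--closed and $L^0$--convex, and your observation that $\tilde{I}_{A}x+\tilde{I}_{A^{c}}y\in G$ with $A=(Re(f(x))\geq Re(f(y)))$ makes $H$ directed upwards, so the family $\{C_{a}\}$ has the finite intersection property and $L^0$--convex compactness yields a maximizer. The reduction to a.s.\ boundedness at the start of the ``if'' direction is also reasonable (it does, however, invoke a random uniform boundedness principle that is not among the results quoted in this paper and would need a precise citation or proof, together with the isometry of the canonical embedding $J:E\to E^{**}$, i.e.\ a Hahn--Banach argument).

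The genuine gap is that the substantial half of the proposition --- sufficiency --- is not actually proved. Your argument rests on two claims, neither of which is established: (a) a ``random Banach--Alaoglu'' statement that every a.s.\ bounded, $\sigma(E^{**},E^{*})$--closed, $L^0$--convex subset of $E^{**}$ is $L^0$--convexly compact for the random weak$^{*}$ topology; this is not a result available in this paper, and since genuine weak$^{*}$ compactness of dual balls fails in the random setting except under severe stratification restrictions (cf.\ \cite{Guo08}), such an $L^0$--convex--compactness substitute would itself require a nontrivial proof; and (b) the claim $\Phi\in J(E)$, which you yourself identify as a random James theorem and leave at the level of ``transplant the classical sup--limiting construction''. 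Since (b) is exactly the content one is asked to prove (attainment of suprema implies $L^0$--convex compactness), deferring it means the proposal is a plan rather than a proof. Note also that the present paper does not prove Proposition \ref{proposition4.14} at all but cites \cite{GZWW20}, where, as the introduction indicates, the characterization is obtained by means of the theory of random conjugate spaces; so even as a sketch your route (UBP plus a weak$^{*}$ Alaoglu-type step in $E^{**}$ plus a from-scratch random James argument) is not a reconstruction of the cited proof but an independent program whose two key steps remain open in your write-up.
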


\begin{corollary}\label{corollary4.15}\cite{GZWW20}
A complete $RN$ module $(E,\|\cdot\|)$ is random reflexive iff every $\mathcal{T}_{\varepsilon,\lambda}$--closed and a.s. bounded $L^0$--convex subset of $E$ is $L^0$--convexly compact.
\end{corollary}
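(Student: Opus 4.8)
The plan is to deduce Corollary~\ref{corollary4.15} by combining the characterization in Proposition~\ref{proposition4.14} with the random James theorem of \cite{GL05}, which characterizes random reflexivity of a complete $RN$ module by the property that every $f\in E^*$ attains its $L^0$--norm on the closed unit ball $U=\{x\in E:\|x\|\leq 1\}$. The bridge between the two is the elementary identity $\bigvee\{Re(f(x)):x\in U\}=\|f\|^*$ for each $f\in E^*$: the inequality $\leq$ is immediate from $Re(f(x))\leq|f(x)|\leq\|f\|^*\|x\|\leq\|f\|^*$, while $\geq$ follows because $U$ is $L^0$--balanced, so for any $x\in U$ the unit multiplier $\alpha=f(x)/|f(x)|$ on $(|f(x)|>0)$ (and $\alpha=1$ elsewhere) gives $\bar\alpha x\in U$ with $Re(f(\bar\alpha x))=|f(x)|$, whence the supremum over $U$ already exhausts $\bigvee\{|f(x)|:x\in U\}=\|f\|^*$. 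I record this at the outset and use it in both directions.

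For the necessity direction $(\Rightarrow)$, assume $E$ is random reflexive. Then the random James theorem provides, for each $f\in E^*$, a point $x_0\in U$ with $|f(x_0)|=\|f\|^*$; adjusting $x_0$ by a unit multiplier as above yields $Re(f(x_0))=\bigvee\{Re(f(x)):x\in U\}$, so Proposition~\ref{proposition4.14} shows that the unit ball $U$ is $L^0$--convexly compact. To pass from $U$ to an arbitrary nonempty $\mathcal{T}_{\varepsilon,\lambda}$--closed, a.s. bounded, $L^0$--convex set $G$, I would set $\xi=\bigvee\{\|x\|:x\in G\}\in L^0_+(\mathcal{F})$ and first verify that the closed ball $B_\xi=\{x\in E:\|x\|\leq\xi\}$ is $L^0$--convexly compact: using $Re(f(\xi u))=\xi\,Re(f(u))$ one gets $\bigvee\{Re(f(x)):x\in B_\xi\}=\xi\|f\|^*$, attained by $x^*=\tilde I_{(\xi>0)}\,\xi x_0$ (which equals $\theta$ on $(\xi=0)$, where $\xi\|f\|^*=0$ as well), so Proposition~\ref{proposition4.14} applies to $B_\xi$. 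Finally I would invoke the stability lemma that a $\mathcal{T}_{\varepsilon,\lambda}$--closed $L^0$--convex subset of an $L^0$--convexly compact set is again $L^0$--convexly compact: any family of nonempty closed $L^0$--convex subsets of $G$ with the finite intersection property is also such a family inside $B_\xi$, and its nonempty intersection lies in $G$. Applied to $G\subseteq B_\xi$, this gives the claim.

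For the sufficiency direction $(\Leftarrow)$, assume every nonempty $\mathcal{T}_{\varepsilon,\lambda}$--closed, a.s. bounded, $L^0$--convex subset of $E$ is $L^0$--convexly compact. Taking $G=U$ (a.s. bounded by $1$), Proposition~\ref{proposition4.14} furnishes, for each $f\in E^*$, a point $x_0\in U$ with $Re(f(x_0))=\bigvee\{Re(f(x)):x\in U\}=\|f\|^*$; a further unit--multiplier adjustment gives $|f(x_0)|=\|f\|^*$, i.e. $f$ attains its $L^0$--norm on $U$. Since this holds for every $f\in E^*$, the random James theorem \cite{GL05} yields that $E$ is random reflexive, completing the equivalence.

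I expect the main obstacle to be the reduction from the unit ball to a general a.s. bounded set $G$ in the necessity direction, specifically the careful treatment of the event $(\xi=0)$ and the verification that scaling by the possibly vanishing $L^0$--element $\xi$ interacts correctly with the suprema and with closedness; this is exactly where the locality and $\sigma$--stability of $E$, together with the subset--stability lemma for $L^0$--convex compactness, must be marshalled. The remaining steps are routine once the identity $\bigvee\{Re(f(x)):x\in U\}=\|f\|^*$ and the two cited results are in place.
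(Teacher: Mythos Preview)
The paper does not supply a proof of Corollary~\ref{corollary4.15}; it merely cites \cite{GZWW20}. There is therefore no in--paper argument to compare yours against.

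Your derivation is correct and is the natural one: you combine Proposition~\ref{proposition4.14} with the random James theorem of \cite{GL05} through the identity $\bigvee\{Re(f(x)):x\in U\}=\|f\|^*$ (the latter is recorded in the paper right after Theorem~\ref{theorem2.14}). The ``subset--stability lemma'' you invoke is immediate from Definition~\ref{definition4.13}, since any family of closed $L^0$--convex subsets of $G$ is already such a family inside the ambient $B_\xi$, and a nonempty intersection automatically lies in each member of the family, hence in $G$. The only delicate point, the passage from $U$ to $B_\xi$ when $\xi$ may vanish on a set of positive measure, is handled by the locality of the $L^0$--norm: writing $\eta=\tilde I_{(\xi>0)}\xi+\tilde I_{(\xi=0)}\in L^0_{++}(\mathcal{F})$ one checks $B_\xi=\xi U$ as sets, so $\bigvee\{Re(f(x)):x\in B_\xi\}=\xi\|f\|^*$ and the maximizer $\xi x_0$ works directly, making your splitting on $(\xi>0)$ unnecessary.
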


\par
Since a proper $L^0$--convex function $f$ on an $L^0$--convex and $\mathcal{T}_{\varepsilon,\lambda}$--closed subset $G$ of an $RLC$ module over $R$ with base $(\Omega,\mathcal{F}, P)$ must be stable, namely $f(\tilde{I}_A x+ \tilde{I}_{A^c}y)= \tilde{I}_Af(x)+ \tilde{I}_{A^c}f(y)$ for any $A\in \mathcal{F}$ and $x,y\in G$, Proposition \ref{proposition4.16} below, which can be called the fundamental theorem of random convex optimization, surveys an important special case of Theorem 3.6 and 3.8 of \cite{GZWW20}.

\begin{proposition}\label{proposition4.16}
Let $(E,\mathcal{P})$ be an $RLC$ module over $R$ with base  $(\Omega,\mathcal{F}, P)$ such that $E$ is $\sigma$--stable, $G$ a nonempty $\mathcal{T}_{\varepsilon,\lambda}$--closed $L^0$--convex subset of $E$ and $f: G\rightarrow \bar{L}^0(\mathcal{F})$ a proper $\mathcal{T}_c$--lower semicontinuous $L^0$--convex function. Then we have the following statements:
\begin{enumerate}[(1)]
\item If $G$ is $L^0$--convexly compact, then there exists some $x_0\in G$ such that $f(x_0)= \bigwedge \{f(x):x\in G  \}$.
\item If $E$ is a random reflexive $RN$ module and $f$ is random coercive, then there exists some $x_0\in G$ such that $f(x_0)= \bigwedge \{f(x):x\in G  \}$.
\end{enumerate}
\end{proposition}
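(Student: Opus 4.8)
The plan is to transcribe the classical Weierstrass existence argument into the random setting, using the finite--intersection formulation of $L^0$--convex compactness (Definition~\ref{definition4.13}) in place of sequential compactness, and to reduce the reflexive--coercive case (2) to the $L^0$--convexly compact case (1) by truncating $G$ with a sublevel set.

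\textbf{Part (1).} First I would put $\mu := \bigwedge\{f(x) : x \in G\} \in \bar{L}^0(\mathcal{F})$, which exists by Proposition~\ref{proposition2.1}. Since a proper $L^0$--convex function on an $L^0$--convex $\mathcal{T}_{\varepsilon,\lambda}$--closed set is (finitely) stable --- as recalled just before the statement --- the family $\{f(x) : x \in G\}$ is directed downwards: given $x,y \in G$, with $A = (f(x) \le f(y))$ the element $z = \tilde{I}_A x + \tilde{I}_{A^c} y$ lies in $G$ and satisfies $f(z) = f(x) \wedge f(y)$. Hence, by Proposition~\ref{proposition2.1}(2), I can pick $\{x_n\}$ in $G$ with $\{f(x_n)\}$ nonincreasing and $\bigwedge_n f(x_n) = \mu$. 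I would then consider the sublevel sets $G_n := \{x \in G : f(x) \le f(x_n)\}$; they are $L^0$--convex, contain $x_n$, and satisfy $G_1 \supseteq G_2 \supseteq \cdots$, so $\{G_n\}$ has the finite intersection property. They are $\mathcal{T}_c$--closed by $\mathcal{T}_c$--lower semicontinuity of $f$, and they are $\sigma$--stable because they are slices (at a fixed height $f(x_n) \in L^0(\mathcal{F})$) of the $\sigma$--stable epigraph of a proper $\mathcal{T}_c$--lower semicontinuous $L^0$--convex function; Theorem~\ref{theorem2.12} then upgrades $\mathcal{T}_c$--closedness to $\mathcal{T}_{\varepsilon,\lambda}$--closedness. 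Since $G$ is $L^0$--convexly compact there is $x_0 \in \bigcap_n G_n$, and then $f(x_0) \le f(x_n)$ for all $n$ gives $f(x_0) \le \mu$, while $x_0 \in G$ gives $f(x_0) \ge \mu$; thus $f(x_0) = \mu = \bigwedge\{f(x): x \in G\}$.

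\textbf{Part (2).} Next I would fix $x_1 \in G$ with $f(x_1) < +\infty$ on $\Omega$ (possible since $f$ is proper), set $r_1 := f(x_1) \in L^0(\mathcal{F})$ and $G_1 := \{x \in G : f(x) \le r_1\}$; as in Part~(1) this is a nonempty, $\mathcal{T}_{\varepsilon,\lambda}$--closed, $L^0$--convex, $\sigma$--stable subset of $E$. The key claim to establish is that $G_1$ is a.s.\ bounded. If not, $\bigvee\{\|x\| : x \in G_1\} = +\infty$ on some $A \in \mathcal{F}$ with $P(A) > 0$; the family $\{\|x\| : x \in G_1\}$ is directed upwards --- for $x,y \in G_1$ take $B = (\|x\| \ge \|y\|)$, then $z = \tilde{I}_B x + \tilde{I}_{B^c} y \in G_1$ has $\|z\| = \tilde{I}_B\|x\| + \tilde{I}_{B^c}\|y\| = \|x\| \vee \|y\|$ by (RNM-1) --- so Proposition~\ref{proposition2.1}(2) supplies a nondecreasing sequence $\{y_n\}$ in $G_1$ with $\|y_n\| \uparrow +\infty$ on $A$, and random coercivity then forces $f(y_n) \to +\infty$ on $A$, contradicting $f(y_n) \le r_1$. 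Hence $G_1$ is a.s.\ bounded, and since $E$ is random reflexive, Corollary~\ref{corollary4.15} gives that $G_1$ is $L^0$--convexly compact. Applying Part~(1) to $f|_{G_1}$ yields $x_0 \in G_1$ with $f(x_0) = \bigwedge\{f(x) : x \in G_1\}$. Finally I would check $\bigwedge\{f(x):x \in G_1\} = \bigwedge\{f(x):x \in G\}$: for any $x \in G$, with $C = (f(x) > r_1)$ the element $z = \tilde{I}_C x_1 + \tilde{I}_{C^c} x$ lies in $G_1$ and satisfies $f(z) \le f(x)$, so the infimum over $G_1$ does not exceed that over $G$ (the reverse inequality being trivial). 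Therefore $f(x_0)$ is the minimum of $f$ over $G$.

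The routine verifications --- finite stability of $f$, $\sigma$--stability of the epigraph (hence of the sublevel sets), and the $L^0$--norm identities on two--term concatenations --- I would leave to the standard references on random convex analysis and to \cite{GZWW20}. The point I expect to require genuine care is the topology mismatch in Part~(1): $L^0$--convex compactness is defined through $\mathcal{T}_{\varepsilon,\lambda}$--closed $L^0$--convex sets, whereas the hypothesis only provides $\mathcal{T}_c$--lower semicontinuity, so one really does need the $\sigma$--stability of the sublevel sets in order to invoke Theorem~\ref{theorem2.12}; the analogous delicate step in Part~(2) is extracting, from the mere a.s.\ unboundedness of $G_1$, a single monotone norm--divergent sequence along which random coercivity can be applied, which is precisely what Proposition~\ref{proposition2.1}(2) accomplishes.
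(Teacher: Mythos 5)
Your argument is correct, and it cannot be compared line-by-line with a proof in the paper because the paper gives none: Proposition \ref{proposition4.16} is stated as a special case of Theorems 3.6 and 3.8 of \cite{GZWW20}, so what you have done is reconstruct the argument behind those cited results, and your reconstruction follows the standard route (directedness of $\{f(x):x\in G\}$ via stability, a nested family of closed $L^0$--convex sublevel sets fed into the finite--intersection formulation of $L^0$--convex compactness, and in the reflexive--coercive case truncation by one sublevel set whose a.s.\ boundedness is extracted from coercivity through Proposition \ref{proposition2.1}(2) and Corollary \ref{corollary4.15}). Two small points deserve tightening. First, in Part (1) you write the levels as $f(x_n)\in L^0(\mathcal{F})$, but the sequence supplied by Proposition \ref{proposition2.1}(2) may have $f(x_n)=+\infty$ on part of $\Omega$; this is harmless but should be arranged explicitly, e.g.\ fix $x^\dagger\in G$ with $f(x^\dagger)<+\infty$ on $\Omega$ and replace $x_n$ by $\tilde{I}_{B_n}x_n+\tilde{I}_{B_n^c}x^\dagger$ with $B_n=(f(x_n)\le f(x^\dagger))$, which by stability keeps the values nonincreasing, finite, and with the same infimum $\mu$. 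Second, the $\sigma$--stability of the sublevel sets does not need the detour through the epigraph: if $x=\sum_n\tilde{I}_{A_n}x_n$ with each $f(x_n)\le r$, then applying finite stability to $x_n=\tilde{I}_{A_n}x+\tilde{I}_{A_n^c}x_n$ gives $\tilde{I}_{A_n}f(x)=\tilde{I}_{A_n}f(x_n)$ for every $n$, hence $f(x)\le r$ directly; this also makes transparent why Theorem \ref{theorem2.12} is then applicable to pass from $\mathcal{T}_c$--closedness to $\mathcal{T}_{\varepsilon,\lambda}$--closedness, which, as you correctly identify, is the only delicate topology issue in Part (1). With these routine repairs your proof is complete and consistent with the results it is meant to specialize.
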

\par
Now, we can prove Theorem \ref{theorem4.11}.
\begin{proof}[Proof of Theorem \ref{theorem4.11}]
Since $L^p_{\mathcal{F}}(\mathcal{E})$ is random reflexive $RN$ module and $G$ is a.s. bounded, $L^0$--convex and $\mathcal{T}_{\varepsilon,\lambda}$--closed, it immediately follows from $(1)$ of Proposition \ref{proposition4.16} and Corollary \ref{corollary4.15}.

\end{proof}

\begin{remark}\label{remark4.17}
Generally, one always hope that the constraint set $G$ is taken as $\{x\in M: E[x|\mathcal{F}]=\mathrm{w} $ and $\pi(x)=1\}$, for example, Hansen and Richard \cite{HR87} just did so for conditional variance since conditional variance is random strictly convex and random coercive on $\{x\in M: E[x|\mathcal{F}] =\mathrm{w}$ and $\pi(x)=1 \}$. Just as pointed out in \cite{GZWW20}, Lemma3.3 of \cite{HR87} is practically a special case of $(2)$ of Proposition \ref{proposition4.16}. However, since a conditional convex risk measure is neither random strictly convex nor random coercive on the constraint set $\{x\in M: \pi(x)=1$ and $E[x|\mathcal{F}]=\mathrm{w}\}$, as illustrated in Section \ref{section3}, we are forced to consider the constraint set $G =\{x\in M: \pi(x)=1, E[x|\mathcal{F}]=\mathrm{w}$ and $|||x|||_p\leq r\}$ in order to guarantee that Problem \ref{problem4.9} has at least one solution! We have not yet obtained a good theory of existence of Problem \ref{problem4.9}, it is fortunate for us to obtain Theorem \ref{theorem4.12}, whose proof needs Lemma \ref{lemma4.18} below.
\end{remark}

\par
To prove Lemma \ref{lemma4.18} below, let us first recall some convenient notations. Given $\xi$ in $L^0(\mathcal{F},K)$ with an arbitarily chosen representative $\xi^0: \Omega\rightarrow K,$ let $\theta(\omega)$ stand for the principal argument of $\xi^0(\omega)$ with value in $[0, 2\pi)$, we use $arg(\xi)$ for the equivalence class of $\theta$ and $e^{-iarg(\xi)}$ for the equivalence class of $e^{-i\theta}$. Besides, let $(E,\|\cdot\|)$ be an $RN$ module over $K$ with base $(\Omega, \mathcal{F},P)$ and $x\in E$, $A_x$ denotes the set $\{\omega\in \Omega: \|x\|^0(\omega)>0\}$ for any arbitarily chosen representative $\|x\|^0$ of $\|x\|$, which would produce no confusion since $A_x$ only differs by a null set for different choices of $\|x\|^0$.

\begin{lemma}\label{lemma4.18}
Let $(E,(\cdot, \cdot))$ be an $RIP$ module over $K$ with base $(\Omega, \mathcal{F},P)$ and $x,y$ in $E$ such that $|(x,y)|=\|x\|\cdot \|y\|$  and $\|y\|\in L^0_{++}(\mathcal{F})$, then there exists $\alpha\in L^0(\mathcal{F},K)$ such that $x=\alpha y$.
\end{lemma}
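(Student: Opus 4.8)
The plan is to transcribe the classical proof of the equality case of the Cauchy--Schwarz inequality into the $L^0(\mathcal{F},K)$--module setting. Since $\|y\|\in L^0_{++}(\mathcal{F})$, the element $\|y\|$ is invertible in the algebra $L^0(\mathcal{F})$, so one may form
$$\alpha=(x,y)\cdot\|y\|^{-2}\in L^0(\mathcal{F},K),$$
the natural ``orthogonal projection coefficient'' of $x$ onto $y$. First I would record that $\alpha$ is a well-defined element of $L^0(\mathcal{F},K)$ and that $\overline{\alpha}=\overline{(x,y)}\cdot\|y\|^{-2}$, the factor $\|y\|^{-2}$ being real and strictly positive; note also that on the event where $x=\theta$ one has $(x,y)=0$, so the corresponding restriction of $\alpha$ vanishes there and no case splitting on $\|x\|$ is needed.

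The heart of the argument is the direct computation of $\|x-\alpha y\|^{2}=(x-\alpha y,\,x-\alpha y)$. Expanding by the $L^0$--sesquilinearity and conjugate symmetry of $(\cdot,\cdot)$ gives
$$\|x-\alpha y\|^{2}=\|x\|^{2}-\overline{\alpha}\,(x,y)-\alpha\,\overline{(x,y)}+|\alpha|^{2}\,\|y\|^{2}.$$
Substituting the definition of $\alpha$ one finds $\overline{\alpha}\,(x,y)=\alpha\,\overline{(x,y)}=|\alpha|^{2}\,\|y\|^{2}=|(x,y)|^{2}\cdot\|y\|^{-2}$, so the three correction terms collapse to $-|(x,y)|^{2}\cdot\|y\|^{-2}$ and
$$\|x-\alpha y\|^{2}=\|x\|^{2}-|(x,y)|^{2}\cdot\|y\|^{-2}.$$
This identity holds with no assumption on $(x,y)$; the hypothesis $|(x,y)|=\|x\|\cdot\|y\|$ enters only at this point, giving $|(x,y)|^{2}\cdot\|y\|^{-2}=\|x\|^{2}$ and hence $\|x-\alpha y\|^{2}=0$. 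By the positive definiteness of the induced $L^0$--norm on the $RIP$ module $E$ (axiom $(RNM-3)$ of Definition \ref{definition2.2}), this forces $x-\alpha y=\theta$, i.e. $x=\alpha y$, which is the assertion.

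I do not anticipate a genuine obstacle here: the statement is the module analogue of a one-line Hilbert-space fact, and the only points demanding care are bookkeeping ones. First, $L^0(\mathcal{F})$ is merely a commutative ring, not a field, so the use of $\|y\|^{-1}$ must be justified precisely by $\|y\|\in L^0_{++}(\mathcal{F})$; this is exactly why the lemma needs that hypothesis rather than only $\|y\|\in L^0_{+}(\mathcal{F})$. Second, in the complex case one must keep the conjugation in the correct slot of $(\cdot,\cdot)$ throughout the expansion. As an alternative that further streamlines the complex case, one could first replace $x$ by $e^{-i\,arg((x,y))}\cdot x$, so that the inner product becomes the nonnegative element $|(x,y)|\in L^0_{+}(\mathcal{F})$ and $\|x\|$ is unchanged, and then run the same computation with a real nonnegative coefficient; but the projection argument above does not require even this normalization.
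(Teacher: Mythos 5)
Your proof is correct, but it takes a genuinely different route from the paper's. You run the classical equality case of Cauchy--Schwarz directly in the module: since $\|y\|\in L^0_{++}(\mathcal{F})$ is invertible in $L^0(\mathcal{F})$, you form the projection coefficient $\alpha=(x,y)\|y\|^{-2}$, expand $\|x-\alpha y\|^2$ by $L^0$--sesquilinearity to get $\|x\|^2-|(x,y)|^2\|y\|^{-2}$, and the hypothesis $|(x,y)|=\|x\|\,\|y\|$ kills this, so $x=\alpha y$ by the definiteness axiom; the computation is self-contained, needs only the $RIP$--module axioms, and requires no case analysis on the support of $x$. The paper instead first normalizes by $e^{-i\,arg((x,y))}$ so that $(x,y)=\|x\|\,\|y\|$, deduces the norm identity $\|x+y\|=\|x\|+\|y\|$, and then invokes the random strict convexity of $RIP$ modules (Theorem 3.3 and Definition 3.2 of \cite{GZ10}) to conclude that $\tilde{I}_{A_{xy}}x=\xi\,\tilde{I}_{A_{xy}}y$ for some $\xi\in L^0_+(\mathcal{F})$ with $\xi>0$ on $A_{xy}$, using $\|y\|\in L^0_{++}(\mathcal{F})$ to get $A_y=\Omega$ and then patching with indicators (plus a separate trivial case $x=\theta$). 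Your argument is shorter and avoids the external citation and the stratification bookkeeping; the paper's argument buys a conceptual link to random strict convexity, keeping the lemma inside the machinery already developed for $RN$/$RIP$ modules, at the cost of the normalization and indicator-splitting steps. Both uses of the hypothesis $\|y\|\in L^0_{++}(\mathcal{F})$ are essential, though for different reasons: you need it to invert $\|y\|$, the paper needs it to ensure $A_{xy}=A_x$ and $A_y=\Omega$.
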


\begin{proof}
Let $arg((x,y))=\xi$, then $|(x,y)|=e^{-iarg(\xi)}\cdot (x,y)=(e^{-iarg(\xi)}x,y)$, so we can, without loss of generality, assume that $(x,y)=\|x\|\cdot\|y\|$, then $\|x+y\|^2=\|x\|^2+ 2\|x\|\cdot\|y\|+ \|y\|^2=(\|x\|+\|y\|)^2$, namely, $\|x+y\|=\|x\|+ \|y\|$. Since $(E,(\cdot, \cdot))$ is random strictly convex $($see \cite[Theorem3.3]{GZ10}$)$, when $x$ is not zero, by Definition 3.2 of \cite{GZ10} there exists $\xi\in L^0_+(\mathcal{F})$ such that $\xi>0$ on $A_{xy}$ and $\tilde{I}_{A_{xy}}x= \xi (\tilde{I}_{A_{xy}}y)$, where $A_{xy}=A_x\cap A_y$. Since $\|y\|\in L^0_{++}(\mathcal{F})$, $A_y=\Omega$ and $A_{xy}=A_x$, further let $\alpha=\tilde{I}_{A_x}\cdot \xi+ \tilde{I}_{A^c_x}\cdot 0$, then $x=\alpha y$. Finally, when $x=\theta$ $($the null of $E)$, taking $\alpha=0$ yields $x=\alpha y$.
\end{proof}
\par
We can now prove Theorem \ref{theorem4.12}.

\begin{proof}[Proof of Theorem \ref{theorem4.12}]
Let $u$ and $v$ in $G$ be two solutions to Problem \ref{problem4.9} for $\rho= \rho_{\gamma}$, then $\rho_{\gamma}(\frac{u+v}{2})=\frac{1}{2}\rho_{\gamma}(u)+  \frac{1}{2}\rho_{\gamma}(v) $, namely $\frac{1}{\gamma}\ln E[e^{-\gamma\cdot \frac{u+v}{2} }|\mathcal{F}]= \frac{1}{2}\gamma \ln E[e^{-\gamma u}| \mathcal{F}]+ \frac{1}{2}\gamma  \ln E[e^{-\gamma v}| \mathcal{F}]$, that is, $E[e^{-\frac{\gamma u}{2}}\cdot e^{-\frac{\gamma v}{2}}|\mathcal{F} ]= E[e^{-\gamma u}|\mathcal{F}]^{\frac{1}{2}}\cdot E[e^{-\gamma v}|\mathcal{F}]^{\frac{1}{2}}$.
\par
Now, let $x=e^{-\frac{\gamma u}{2}}$ and $y= e^{-\frac{\gamma v}{2}}$, then $x$ and $y$ are in $L^2_{\mathcal{F}}(\mathcal{E})$ and $(x,y)= E[xy|\mathcal{F}]= |||x|||_2\cdot |||y|||_2$. By Lemma \ref{lemma4.18} there exists $\alpha\in L^0(\mathcal{F})$ such that $x=\alpha y$, it is very easy to see that $\alpha\in L^0_{++}(\mathcal{F})$ and $-\frac{\gamma u}{2}= \ln \alpha- \frac{\gamma v}{2}$, namely $v-u= \frac{2}{\gamma}\ln \alpha$, which yields $\frac{2}{\gamma}\ln \alpha = E[(v-u)| \mathcal{F}]=0,$ and hence $\alpha=1$, that is, $x=y$.
\end{proof}

\section*{Acknowledgement}
This paper is supported by the NNSF of China (Grant No.11571369 and No.11971483). The author would like to thank his recent coauthors: Ph.D Minzhi Wu, Erxin Zhang, Yachao Wang and Professor George Yuan for their kind cooperation in recent years, which makes the author finish this paper.

%\section{References}

\bibliography{amsplain}

\end{document}